\theoremstyle{plain}
\newtheorem{thm}{Theorem}[section]
\newtheorem{cor}{Corollary}[section]
\newtheorem{lem}{Lemma}[section]
\newtheorem{ass}{Assumption}[section]
\theoremstyle{remark}
\theoremstyle{definition}
\newtheorem{defn}{Definition}[section]
\newtheorem{rem}{Remark}[section]
\newcommand{\Complex}{\mathbb C}
\newcommand{\Real}{\mathbb R}
\newcommand{\N}{\mathbb N}
\newcommand{\ddbar}{\overline\partial}
\newcommand{\pr}{\partial}
\newcommand{\ol}{\overline}
\newcommand{\Td}{\widetilde}
\newcommand{\norm}[1]{\left\Vert#1\right\Vert}
\newcommand{\abs}[1]{\left\vert#1\right\vert}
\newcommand{\set}[1]{\left\{#1\right\}}
\newcommand{\To}{\rightarrow}
\title{Toeplitz operators on CR manifolds and group actions}
\author{Andrea Galasso and Chin-Yu Hsiao\footnote{\noindent{\bf Address:} Room 407, Chee-Chun Leung Cosmology Hall, National Taiwan University; {\bf ORCID iD:} 0000-0002-5792-1674; {\bf e-mail}: andrea.galasso@ncts.ntu.edu.tw {\bf Address:} Institute of Mathematics, Academia Sinica, 6F, Astronomy-Mathematics Building, No.1, Sec.4, Roosevel: {\bf ORCHID iD:} 0000-0002-1781-0013; {\bf email}: chsiao@math.sinica.edu.tw; chinyu.hsiao@gmail.com}}
\date{}
\begin{document}
\maketitle

\begin{abstract}  Let $(X,\,T^{1,0}X)$ be a connected orientable compact CR manifold of dimension $2n+1,\, n\geq 1$ with non-degenerate Levi curvature. 
In this paper, we study the algebra of  Toeplitz operators on $X$ and we establish star product for some class of symbols on $X$. In the second part of this paper, we consider a compact locally free Lie group $G$ acting on $X$ and we investigate the associated algebra of $G$-invariant Toeplitz operators.  
\end{abstract}
\tableofcontents
\bigskip
\textbf{Keywords:} CR manifolds, Toeplitz operators, group actions

\textbf{Mathematics Subject Classification:} 32Vxx, 32A25, 53D50

\section{Introduction}

Let $(X,\,T^{1,0}X)$ be a connected orientable compact CR manifold of dimension $2n+1,\, n\geq 1$. Let $\Box^{q}_b$ be the K\"ohn Laplacian acting on $(0,\,q)$ forms. The corresponding orthogonal projection $S^{(q)}\,:\,L^2_{(0,q)}(X)\rightarrow \ker\Box^{q}_b$ is called the Szeg\H{o} projector and its distributional kernel $S^{(q)}(x,y)$ was studied in \cite{hsiao}; when $X$ is strongly pseudo-convex of dimension greater or equal to five, Boutet de Monvel and Sj\"ostrand \cite{bs} proved that $S^{(0)}$ is a Fourier integral operator of complex type. 
Given a pseudo-differential operator $P$, one can define a Toeplitz operator $T_P:=S^{(q)}\circ P \circ S^{(q)}$. The algebra of Toeplitz operators on a compact strongly pseudoconvex CR manifold was originally investigated by Boutet de Monvel and Guillemin \cite{bg}. When $X$ is not strongly pseudoconvex, there are fewer results. In this work, we consider the case when the Levi form of $X$ is non-degenerate of constant signature $(n_-, n_+)$ and we study the algebra of $T_P:=S^{(q)}\circ P \circ S^{(q)}$, where $q=n_-$ and $P$ is a pseudo-differential operator with scalar principal symbol. In the second part of this paper, we consider a compact Lie group $G$ acting on $X$. The first aim of the second part is to investigate the associated algebra of $G$-invariant Toeplitz operators, secondly we study the associated Fourier component in the presence of a transversal locally free circle action. 

We now formulate the main results. 
We refer the reader to Section~\ref{s:prelim} for some notations and terminology used here. Let $(X,\,T^{1,0}X)$ be a connected orientable compact CR manifold of dimension $2n+1,\, n\geq 1$. Fix a Reeb one form $\omega_0\in\mathcal{C}^\infty(X,T^*X)$ and let $R\in\mathcal{C}^\infty(X,TX)$ be the Reeb vector field given by \eqref{e-gue170111ry}. For every $x\in X$, let $\mathcal{L}_x$ be the Levi form of $X$ at $x\in X$ associated to $\omega_0$ (see \eqref{e-gue210813yyd}). Fix a smooth Hermitian metric $\langle\, \cdot \,|\, \cdot \,\rangle$ on $\mathbb{C}TX$ so that $T^{1,0}X$ is orthogonal to $T^{0,1}X$,  $\langle\,R\,|\,R\,\rangle=1$ and $R$ is orthogonal to $T^{1,0}X\oplus T^{0,1}X$. The Hermitian metric $\langle\, \cdot\, |\, \cdot \,\rangle$ on $\mathbb{C}TX$ induces, by duality, a Hermitian metric on $\mathbb{C}T^*X$ and also on the bundles of $(0,q)$ forms $T^{*0,q}X, q=0, 1, \cdots, n$. We shall also denote all these induced metrics by $\langle \,\cdot \,|\, \cdot \,\rangle$. Let $dv(x)$ be the volume form on $X$ induced by the Hermitian metric $\langle\,\cdot\,|\,\cdot\,\rangle$ and let 
 $(\,\cdot\,|\,\cdot\,)$ be the $L^2$ inner product on $\Omega^{0,q}(X)$ 
induced by $dv(x)$ and $\langle\,\cdot\,|\,\cdot\,\rangle$. Let $\Box^{q}_{b}$ denote the (Gaffney extension) of the K\"ohn Laplacian given by \eqref{e-suIX}. 
The characteristic set of $\Box^q_b$ is given by 
\begin{equation}\label{e-gue210813yydI}
\begin{split}
&\Sigma=\Sigma^-\bigcup\Sigma^+,\\
&\Sigma^-=\set{(x,\lambda\omega_0(x))\in T^*X;\,\lambda<0},\\
&\Sigma^+=\set{(x,\lambda\omega_0(x))\in T^*X;\,\lambda>0}.
\end{split}
\end{equation}
In this work, we assume that 

\begin{ass}\label{a-gue210813yydII}
The Levi form is non-degenerate of constant signature $(n_-,n_+)$, where $n_-$ denotes the 
number of negative eigenvalues of the Levi form and $n_+$ denotes the 
number of positive eigenvalues of the Levi form. We always let $q=n_-$ and suppose that 
$\Box^q_b$ has $L^2$ closed range. 
\end{ass}

It should be mention that if $q=n_-=n_+$ or $\abs{n_--n_+}>1$, then $\Box^q_b$ 
has $L^2$ closed range (see~\cite{Koh86}).

Let 
\[S^{(q)}: L^2_{(0,q)}(X)\To{\rm Ker\,}\Box^q_b\]
be the orthogonal projection (Szeg\H{o} projector). 
It is known that (see~\cite[Theorem 1.2]{hsiao},~\cite[Theorem 4.7]{HM14}) there exist continuous operators
$S_-, S_+: L^2_{(0,q)}(X)\To{\rm Ker\,}\Box^q_b$ such that 
\begin{equation}\label{e-gue210813yydIII}
\begin{split}
&S^{(q)}=S_-+S_+,\\
&{\rm WF'\,}(S_-)={\rm diag\,}(\Sigma^-\times\Sigma^-),\\
&S_+\equiv0\ \ \mbox{if $q\neq n_+$},\\
&{\rm WF'\,}(S_+)={\rm diag\,}(\Sigma^+\times\Sigma^+)\ \ \mbox{if $q=n_-=n_+$},
\end{split}
\end{equation}
where ${\rm WF'\,}(S_-)=\set{(x,\xi,y,\eta)\in T^*X\times T^*X;\,(x,\xi,y,-\eta)\in{\rm WF\,}(S_-)}$, ${\rm WF\,}(S_-)$ is the wave front set of $S_-$ in the sense of H\"ormander. For $m\in\mathbb R$, 
let $L^m_{{\rm cl\,}}(X,T^{*0,q}X\boxtimes(T^{*0,q}X)^*)$ denote the space of classical 
pseudodifferential operators on $D$ of order $m$ from sections of $T^{*0,q}X$ to 
sections of $T^{*0,q}X$.
Let $P\in L^\ell_{{\rm cl\,}}(X,T^{*0,q}X\boxtimes(T^{*0,q}X)^*)$ with scalar principal symbol, $\ell\leq0$, $\ell\in\mathbb Z$.
The Toeplitz operator is given by 
\begin{equation}\label{e-gue210813yydc}
\begin{split}
&T_P:=S^{(q)}\circ P\circ S^{(q)}: L^2_{(0,q)}(X)\To{\rm Ker\,}\Box^q_b,\\
&T_{P,-}:=S_-\circ P\circ S_-: L^2_{(0,q)}(X)\To{\rm Ker\,}\Box^q_b,\\
&T_{P,+}:=S_+\circ P\circ S_+: L^2_{(0,q)}(X)\To{\rm Ker\,}\Box^q_b.
\end{split}
\end{equation}
If $q\neq n_+$, then $T_{P,+}\equiv0$ on $X$. 
Let $f\in\mathcal{C}^\infty(X)$ and let $M_f$ denote the operator given by the multiplication $f$. When $P=M_f$, we write $T_f:=T_P$. For every $k\in\mathbb Z$, let $\Psi_k(X)$ denote the space of all complex Fourier integral operators of Szeg\H{o} type of order $k$ (see Definition~\ref{d-gue210624yyd}). The first main result is about the algebra of Toeplitz operators.

\begin{thm}\label{t-gue210701yydz}
Let $P\in L^{\ell}_{{\rm cl\,}}(X,T^{*0,q}X\boxtimes(T^{*0,q}X)^*)$, $Q\in L^{k}_{{\rm cl\,}}(X,T^{*0,q}X\boxtimes(T^{*0,q}X)^*)$ with scalar principal symbols, $\ell, k\leq0$, $\ell, k\in\mathbb Z$. 
We have $[T_{P,\mp},T_{Q,\mp}]\in\Psi_{n-1+\ell+k}(X)$ and 
\begin{equation}\label{e-gue210813yyda}\begin{split}
&[T_{P,-}, T_{Q,-}]-T_{{\rm Op\,}(\imath\{\sigma^0_P,\sigma^0_Q\}_-),-} \in\Psi_{n-2+\ell+k}(X),\\
&[T_{P,+}, T_{Q,+}]-T_{{\rm Op\,}(\imath\{\sigma^0_P,\sigma^0_Q\}_+),+} \in\Psi_{n-2+\ell+k}(X)\ \ \mbox{if $q=n_-=n_+$},
\end{split}\end{equation}
where $\{\sigma^0_P,\sigma^0_Q\}_{\mp}$ denotes the  transversal Poisson bracket  of $\sigma^0_P$ and $\sigma^0_Q$ with respect to $\Sigma^{\mp}$ (see \eqref{e-gue210721yyda}), 
$\sigma^0_P$ and $\sigma^0_Q$ are principal symbols of $P$ and $Q$ respectively. 
\end{thm}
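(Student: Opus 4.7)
The plan is to reduce Theorem~\ref{t-gue210701yydz} to a computation in the symbolic calculus of the class $\Psi_*(X)$, using the microlocal description of $S_\mp$ established by Hsiao and Hsiao-Marinescu. Near $\Sigma^\mp$, $S_\mp$ admits an oscillatory integral representation $\int_0^{+\infty}e^{it\varphi_\mp(x,y)}s_\mp(x,y,t)\,dt$ with a complex phase $\varphi_\mp$ of positive type whose critical set cuts out the diagonal of $\Sigma^\mp\times\Sigma^\mp$, and a classical amplitude $s_\mp$ of order $n$ in the fiber variable $t$. Composing this representation with a classical pseudodifferential operator $P$ of order $\ell$ with scalar principal symbol, one verifies that $T_{P,\mp}=S_\mp\circ P\circ S_\mp$ lies in $\Psi_{n+\ell}(X)$, with principal symbol equal to the principal symbol of $S_\mp$ multiplied by $\sigma^0_P|_{\Sigma^\mp}$.

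For the composition $T_{P,\mp}\circ T_{Q,\mp}$, since the two canonical relations both coincide with the diagonal of $\Sigma^\mp\times\Sigma^\mp$, a single application of stationary phase in one of the fiber variables reduces the double oscillatory integral to a single one of the same Szeg\"o type. The critical point computation shows that the principal symbol of the composition equals the principal symbol of $S_\mp$ times $(\sigma^0_P\sigma^0_Q)|_{\Sigma^\mp}$. As this quantity is symmetric in $P$ and $Q$, it cancels in $[T_{P,\mp},T_{Q,\mp}]$, forcing the commutator into $\Psi_{n-1+\ell+k}(X)$ and establishing the first assertion.

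To identify the leading symbol of the commutator, I would carry the stationary phase expansion one further term, collecting the contributions from first derivatives of the two amplitudes contracted against the inverse transverse Hessian of $\varphi_\mp$. Because $\varphi_\mp$ vanishes on the diagonal of $\Sigma^\mp$ and its transverse Hessian encodes the restriction of the canonical symplectic form of $T^*X$ to the normal bundle of $\Sigma^\mp$, this contraction is exactly the transversal Poisson bracket of~\eqref{e-gue210721yyda}. After antisymmetrization in $P,Q$ and the standard bookkeeping of the factors of $\imath$, the leading symbol of $[T_{P,\mp},T_{Q,\mp}]$ equals $\imath\{\sigma^0_P,\sigma^0_Q\}_\mp$ times the principal symbol of $S_\mp$, which by the first step matches the principal symbol of $T_{\mathrm{Op}(\imath\{\sigma^0_P,\sigma^0_Q\}_\mp),\mp}$, proving~\eqref{e-gue210813yyda}. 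The $+$ case uses the hypothesis $q=n_-=n_+$ to guarantee the analogous representation of $S_+$ on $\Sigma^+$.

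The main technical obstacle is that $\varphi_\mp$ is a genuinely complex phase of positive type whose critical set is a submanifold rather than an isolated point, so the real stationary phase lemma does not apply directly and one must work with Melin-Sj\"ostrand almost-analytic extensions while carefully tracking the invariance of the subprincipal symbol under the coordinate freedom of the Fourier integral operator. A clean way to organize the bookkeeping is to reduce to a normal form for $\varphi_\mp$ near $\Sigma^\mp$ as in Hsiao's work, carry out the subleading symbol calculation explicitly in those coordinates, and then invoke the coordinate-invariance of both sides of~\eqref{e-gue210813yyda} to transfer the identity globally.
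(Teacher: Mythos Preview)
Your overall strategy—compute the composition $T_{P,\mp}\circ T_{Q,\mp}$ by complex stationary phase, observe that the top-order amplitude is symmetric in $P,Q$, then read off the transversal Poisson bracket from the subleading term—is exactly the route the paper takes. The paper carries this out in the normal coordinates of Theorem~\ref{t-gue161110g}, using Melin--Sj\"ostrand almost-analytic extensions just as you anticipate, and identifies the bracket via an explicit computation (equations \eqref{e-gue210706yyde}, \eqref{e-gue210721yydb}).

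There is, however, a genuine gap in your argument at the order-reduction step. In the class $\Psi_k(X)$ the ``principal symbol'' $\sigma^0_{H,\mp}(x,y)$ is a function of \emph{two} points, not a function on $\Sigma^\mp$; the only invariant is its restriction to the diagonal. Showing that $\sigma^0_{[T_{P,\mp},T_{Q,\mp}],\mp}(x,x)=0$ for all $x$ does \emph{not} by itself force the commutator into $\Psi_{n-1+\ell+k}(X)$: one must show that the leading amplitude vanishes to infinite order at $x=y$, which is a much stronger statement. The paper supplies exactly this missing ingredient as Lemma~\ref{l-gue210624yyd}: if $B\in\Psi_k(X)$ satisfies $S^{(q)}B\equiv B\equiv BS^{(q)}$ and $\tau_{x,n_\mp}\sigma^0_{B,\mp}(x,x)\tau_{x,n_\mp}=0$, then $B\in\Psi_{k-1}(X)$. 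Its proof is not formal bookkeeping; it uses $\ddbar_b B\equiv 0$ and $\ddbar_b^*B\equiv 0$ (consequences of the sandwich by $S^{(q)}$) together with the self-adjoint trick $B_1=B+B^*$ to propagate vanishing on the diagonal to vanishing of all Taylor coefficients of the amplitude at $x=y$. Without this lemma your inference ``principal symbol cancels $\Rightarrow$ order drops'' is unjustified, and the same gap recurs when you pass from \eqref{e-gue210813yyda} at the symbol level to the operator statement.

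A second, smaller point: the subleading stationary-phase term contains, besides the bilinear piece in first derivatives of $\sigma^0_P,\sigma^0_Q$ that you describe, several cross-terms involving derivatives of the Szeg\H{o} amplitudes $s^0_-$ and the Hessian remainder $h(\sigma,u)$ (the paper's $\varepsilon_P$, $\hat\varepsilon_Q$ in \eqref{e-gue210703yydb}, \eqref{e-gue210706yydc}). These do not cancel by symmetry alone; the paper kills them by specializing the identity $S_-\circ S_-\equiv S_-$ (i.e.\ setting $P=I$) to obtain \eqref{e-gue210705yyd}, which forces $\varepsilon_P=0$. Your sketch should acknowledge that this extra relation is needed.
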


\begin{cor}\label{c-gue210813yyd}
Let $f, g\in\mathcal{C}^\infty(X)$. We have $[T_{f,\mp},T_{g,\mp}]\in\Psi_{n-1}(X)$ and 
\begin{equation}\label{e-gue210813yydb}\begin{split}
&[T_{f,-}, T_{g,-}]-T_{\imath\{f,g\},-} \in\Psi_{n-2}(X),\\
&[T_{f,+}, T_{g,+}]-T_{\imath\{f,g\},+} \in\Psi_{n-2}(X)\ \ \mbox{if $q=n_-=n_+$},
\end{split}\end{equation}
where $\{f,g\}$ denotes the  transversal Poisson bracket  of $f$ and $g$  (see \eqref{e-gue210706yydI}).  
\end{cor}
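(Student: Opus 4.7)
The strategy is to deduce the corollary as a direct specialization of Theorem~\ref{t-gue210701yydz} applied to the multiplication operators $P=M_f$ and $Q=M_g$. Such operators are classical pseudodifferential operators of order $\ell=k=0$ acting diagonally on $(0,q)$-forms, with scalar principal symbols given by the pullbacks $\pi^*f$ and $\pi^*g$ along the canonical projection $\pi\colon T^*X\to X$. The hypotheses of the theorem are thus satisfied, and its first conclusion immediately gives $[T_{f,\mp},T_{g,\mp}]\in\Psi_{n-1}(X)$.

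For the refined congruence modulo $\Psi_{n-2}(X)$, Theorem~\ref{t-gue210701yydz} yields
\[
[T_{M_f,\mp},T_{M_g,\mp}] - T_{{\rm Op\,}(\imath\{\pi^*f,\pi^*g\}_{\mp}),\mp}\in\Psi_{n-2}(X),
\]
and two small identifications complete the argument. First, the transversal Poisson bracket \eqref{e-gue210721yyda} of the fiber-independent symbols $\pi^*f$ and $\pi^*g$ must be matched, on $\Sigma^{\mp}$, with the intrinsic transversal Poisson bracket $\{f,g\}$ from \eqref{e-gue210706yydI}. This is a direct unwinding of the two definitions: since neither symbol depends on the fiber parameter $\lambda$ along the Reeb ray $\mathbb R\omega_0$, all contributions involving $\partial_\lambda$ vanish, and what remains is precisely the contact/Levi bracket attached to $X$. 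Second, for any $h\in\mathcal{C}^\infty(X)$, the operator ${\rm Op\,}(\pi^*h)$ differs from $M_h$ by a classical pseudodifferential operator of order $-1$; compressing this difference on both sides by $S_{\mp}$ produces an element of $\Psi_{n-2}(X)$ by the Szeg\H{o}--Toeplitz calculus underlying Theorem~\ref{t-gue210701yydz}, so it can be absorbed into the error term. Combining these two identifications yields \eqref{e-gue210813yydb}.

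The principal obstacle is essentially bookkeeping: aligning the cotangent-bundle definition of the transversal Poisson bracket with its intrinsic counterpart on $X$ requires tracking signs and being careful about the distinction between $\Sigma^-$ and $\Sigma^+$, but no analytic input beyond Theorem~\ref{t-gue210701yydz} is needed.
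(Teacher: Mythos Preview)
Your approach---specialize Theorem~\ref{t-gue210701yydz} to $P=M_f$, $Q=M_g$---is exactly how the paper obtains the corollary (no separate proof is given). Your first identification is in fact a tautology: by definition~\eqref{e-gue210721yyda}, since $\sigma^0_{M_f}$ is fibre-independent one has $\widehat{(\sigma^0_{M_f})}_-=f$, and hence $\{\sigma^0_{M_f},\sigma^0_{M_g}\}_-(x,-\omega_0(x))=\{f,g\}(x)$.

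Your second identification, however, does not do what you claim. By the paper's definition, $\{\sigma^0_{M_f},\sigma^0_{M_g}\}_-$ is an element of $\hat S^{-1}$, so ${\rm Op}$ of it has order $-1$ and $T_{{\rm Op}(\cdot),-}\in\Psi_{n-1}(X)$, whereas $T_{M_{\imath\{f,g\}},-}\in\Psi_n(X)$; these differ at top order, not by something absorbable into $\Psi_{n-2}(X)$. Moreover, for a function $h$ one has ${\rm Op}(\pi^*h)=M_h$ exactly in the standard quantization, so the ``order $-1$ difference'' you invoke is zero---and even if it were not, compressing an order $-1$ operator by $S_{\mp}$ lands in $\Psi_{n-1}(X)$, not $\Psi_{n-2}(X)$. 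The correct reading is that the symbol $\imath\{f,g\}$ in the corollary stands for the element of $\hat S^{-1}$ determined by the function $\{f,g\}$ via~\eqref{e-gue210721yyda}, which is precisely what Theorem~\ref{t-gue210701yydz} outputs; with that reading the corollary is a one-line specialization and your second step is superfluous.
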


To establish star product on $X$, we introduce more notations. Let 
\begin{equation}\label{e-gue210722yydq}
\triangle_b:=\Box^q_b+R^*R: \Omega^{0,q}(X)\To\Omega^{0,q}(X),
\end{equation}
where $R^*$ is the formal adjoint of the Reeb vector field $R$ with respect to $(\,\cdot\,|\,\cdot\,)$. For every $j\in\mathbb Z$, $j\leq0$, put 
\begin{equation}\label{e-gue210722yydIq}
\hat S^j\:=\set{f(x)(\sigma^0_{\triangle_b})^{\frac{j}{2}}\in\mathcal{C}^\infty(TX);\, f(x)\in\mathcal{C}^\infty(X)}
\end{equation}
and let 
\begin{equation}\label{e-gue210722yydIIq}
\hat S:=\bigcup_{j\in\mathbb Z, j\leq0}\hat S^j, 
\end{equation}
where $\sigma^0_{\triangle_b}$ denotes the principal symbol of $\triangle_b$. 
Define
\begin{equation}\label{e-gue210724yydaq}
a\hat{+}b:=\mbox{leading term of $a+b$}. 
\end{equation}
For $a, b\in\hat S$, then $a\hat{+}b\in\hat S^j$, for some $j\in\mathbb Z$, $j\leq0$. We can identify $\hat S^j$ with all homogeneous functions on $\Sigma$ of degree $j$. 
Then, $\set{\hat S,\hat{+}}$ is a vector space and also $\hat S$ has natural algebraic structure, that is, if $a, b\in\hat S$, then $a.b\in\hat S$. 
A star product for the algebra of $\hat S$ with respect to $\Sigma^{\mp}$ is given by the power series \[a*b= \sum_{j=0}^{+\infty} C_{j,\mp}(a,b) \nu^{-j}  \]
such that $*$ is an associative $\mathbb C[[\nu]]$-linear product, that is, $(a*b)*c=a*(b*c)$, for all $a, b, c\in\hat S$ and $C_{0,\mp}(a,b)=a\cdot b$, $C_{1,\mp}(a,b)-C_{1,\mp}(b,a)=\imath\{a, b\}_{\mp}$, for all $a, b\in\hat S$. To simplify the notations, for $a\in\hat S$, we denote 
\begin{equation}\label{e-gue210724ycdq}
T_a:=T_{{\rm Op\,}(a)},\ \ T_{a,-}:=T_{{\rm Op\,}(a),-},\ \ T_{a,+}:=T_{{\rm Op\,}(a),+}.
\end{equation}

Our second main result is about the existence of star product for $\hat S^j$.

\begin{thm}\label{t-gue210724yydaq}
Let $a\in\hat S^\ell$, $b\in\hat S^k$, $\ell, k\in\mathbb Z$, $\ell, k\leq0$. We have
\begin{equation}\label{e-gue210724ycdIq}
T_{a,-}\circ T_{b,-}-\sum^N_{j=0}T_{C_{j,-}(a,b),-}\in\Psi_{n-N-1+\ell+k}(X),
\end{equation}
for every $N\in\mathbb N_0$, where $C_{j,-}(a,b)\in\hat S^{\ell+k-j}$, $C_{j,-}$ is a universal bi-differential operator of order $\leq 2j$, $j=0,1,\ldots$, and

\begin{equation}\label{e-gue210724ycdIIq}
\begin{split}
C_{0,-}(a,b)=ab,\\
C_{1,-}(a,b)-C_{1,-}(b,a)=\imath\{a,b\}_-,
\end{split}
\end{equation} 
where $\{a,b\}_{-}$ denotes the  transversal Poisson bracket  of $a$ and $b$ with respect to $\Sigma^{-}$ (see \eqref{e-gue210721yyda}). 

Moreover, The star product
 \[a*b= \sum_{j=0}^{+\infty} C_{j,-}(a,b) \nu^{-j} ,\]
 $a, b\in\hat S$, is associative. 
 \end{thm}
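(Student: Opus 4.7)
The plan is to build the coefficients $C_{j,-}$ inductively from the Szeg\H{o}-FIO composition calculus, and then to derive associativity from the associativity of operator composition together with the uniqueness of Toeplitz symbol expansions. Theorem~\ref{t-gue210701yydz} supplies the base case: since $T_{a,-}\circ T_{b,-}-T_{ab,-}\in\Psi_{n-1+\ell+k}(X)$, the choice $C_{0,-}(a,b)=ab$ works, and the antisymmetric part of $C_{1,-}$ is forced to be $\imath\{a,b\}_-$.

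For the full asymptotic expansion, I would analyse $S_-\circ\mathrm{Op}(a)\circ S_-\circ\mathrm{Op}(b)\circ S_-$ locally, using the description of $S_-$ as a complex Fourier integral operator on $\Sigma^-$ whose phase $\varphi_-$ and amplitude are universal in terms of the Levi data. Applying the complex stationary phase lemma to the resulting iterated oscillatory integral produces an asymptotic expansion in descending powers of the transverse frequency. The leading term is $ab$, and each successive term is obtained by a bilinear transverse-differential operation on $a$ and $b$ multiplied by a power of $\sigma^0_{\triangle_b}$, giving $C_{j,-}(a,b)\in\hat S^{\ell+k-j}$ with bidifferential order $\leq 2j$. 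To make this rigorous as an inductive construction, I would assume that $E_N:=T_{a,-}\circ T_{b,-}-\sum_{j=0}^{N-1}T_{C_{j,-}(a,b),-}$ lies in $\Psi_{n-N+\ell+k}(X)$, read off its principal symbol on $\Sigma^-$, divide by the homogeneous factor $(\sigma^0_{\triangle_b})^{(\ell+k-N)/2}$ to identify it with a smooth function on $X$, and define $C_{N,-}(a,b)$ so that the remainder drops one further order.

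Associativity then follows from the \emph{uniqueness} of such Toeplitz expansions: if $\sum_{j}T_{F_j,-}$ is smoothing with $F_j\in\hat S^{m-j}$, then every $F_j$ vanishes, because a nontrivial leading Toeplitz symbol of Szeg\H{o} type cannot be cancelled by lower-order Toeplitz tails. Applying this to the difference of the two asymptotic expansions of $(T_{a,-}\circ T_{b,-})\circ T_{c,-}$ and $T_{a,-}\circ(T_{b,-}\circ T_{c,-})$, which agree as operators, yields $(a*b)*c=a*(b*c)$ term by term. The main obstacle will be the symbol bookkeeping in the stationary phase step: one has to verify that the contributions really package into a bidifferential operator of order $\leq 2j$ with image in $\hat S^{\ell+k-j}$, rather than into a larger symbol class. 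This is precisely where the specific role of $\triangle_b$ as the preferred homogeneous generator of $\hat S$, and the tangency of its Hamilton field to $\Sigma^-$, enter decisively.
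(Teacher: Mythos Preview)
Your proposal is correct and follows essentially the same route as the paper: an inductive construction of the $C_{j,-}$ by matching the diagonal principal symbol of the remainder (the paper's Lemma~\ref{l-gue210624yyd} is the precise tool that lets the order drop once $\tau_{x,n_-}\sigma^0_{B,-}(x,x)\tau_{x,n_-}$ vanishes), followed by associativity via operator associativity and injectivity of the Toeplitz symbol map. One small correction: Theorem~\ref{t-gue210701yydz} concerns the commutator, not the composition, so the base case $T_{a,-}\circ T_{b,-}-T_{ab,-}\in\Psi_{n-1+\ell+k}(X)$ comes instead from the leading-symbol computation of Theorem~\ref{t-gue210701yyd} together with Lemma~\ref{l-gue210624yyd}.
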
 
 
When $X$ admits a transversal and CR $\mathbb R$-action, we establish star product for $\mathbb R$-invariant smooth functions (see Theorem~\ref{t-gue210706ycdg} and Theorem~\ref{t-gue210725yyd}). For the better understanding, we state our results in the situation of complex manifolds. Let $(L,h^L)$ be a holomorphic line bundle over a compact complex manifold $M$ and let $(L^k,h^{L^k})$ be the $k$-th power of $(L,h^L)$, where $h^L$ denotes the Hermitian metric of $L$. Let $R^L$ be the curvature of $L$ induced by $h^L$. Fix a Hermitian metric $\langle\,\cdot\,|\,\cdot\,\rangle$ on the holomorphic tangent bundle $T^{1,0}M$ of $M$ and let $(\,\cdot\,|\,\cdot)_k$ be the $L^2$ inner product of $\Omega^{0,q}(M,L^k)$ induced by $\langle\,\cdot\,|\,\cdot\,\rangle$  and $h^{L^k}$, where $\Omega^{0,q}(M,L^k)$ denotes the space of smooth $(0,q)$ forms of $M$ with values in $L^k$. Let 
\[\Box^q_k:=\ddbar^*\,\ddbar+\ddbar\,\ddbar^*: \Omega^{0,q}(M,L^k)\To \Omega^{0,q}(M,L^k)\]
be the Kodaira Laplacian, where $\ddbar^*$ is the adjoint of $\ddbar$ with respect to $(\,\cdot\,|\,\cdot)_k$. Let 
\[B^{(q)}_k: L^2_{(0,q)}(M,L^k)\To{\rm Ker\,}\Box^q_k\]
be the orthogonal projection with respect to $(\,\cdot\,|\,\cdot)_k$ (Bergman projection). For $f\in\mathcal{C}^\infty(M)$, let $M_f$ denote the operator given by the multiplication $f$. The Toeplitz operator is given by 
\[T_{f,k}:=B^{(q)}_k\circ M_f\circ B^{(q)}_k: L^2_{(0,q)}(M,L^k)\To{\rm Ker\,}\Box^q_k.\]
 Applying Theorem~\ref{t-gue210706ycdg} and Theorem~\ref{t-gue210725yyd} to the circle bundle of $(L^*, h^{L^*})$ and by using the proof of Theorem~\ref{thm:compositionFouriercomponents}, we get 
 
\begin{thm} \label{t-gue210813yydr}
	With the same assumptions as above, suppose that the curvature $R^L$ is non-degenerate of constant signature $(n_-,n_+)$. Let $q=n_-$. Let $f, g\in\mathcal{C}^\infty(M)$. Then, as $k\gg1$, 
	\begin{equation}\label{e-gue210803ycdsz}
	\norm{T_{f,k}\circ T_{g,k}-T_{g,k}\circ T_{f,k}-\frac{1}{k}T_{\imath\{f,g\},k}}=O(k^{-2}),\end{equation}
	and 
\begin{equation}\label{e-gue210803ycdtz}
\norm{T_{f,k}\circ T_{g,k}-\sum^N_{j=0}k^{-j}T_{C_j(f,g),k}}=O(k^{-N-1})
\end{equation}
in $L^2$  operator norm, for every $N\in\mathbb N$, 
where $C_j(f,g)\in\mathcal{C}^\infty(M)$, $C_j$ is a universal bidifferential operator of order $\leq 2j$, $j=0,1,\ldots$, and 
\begin{equation}\label{e-gue210803ycduz}
\begin{split}
C_{0}(f,g)=fg,\\
C_{1}(f,g)-C_{1}(g,f)=\imath\{f,g\}.
\end{split}
\end{equation}

 Moreover, the star product
 \begin{equation}\label{e-gue210805yydz}
 f*g= \sum_{j=0}^{+\infty} C_{j}(f,g) \nu^{-j} ,
\end{equation}
 $f, g\in\mathcal{C}^\infty(M)$, is associative.
\end{thm}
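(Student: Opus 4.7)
The strategy is to reduce the theorem to the CR results already established on the circle bundle of $L^*$, as indicated in the text preceding the statement. Let $X=\{v\in L^*:\abs{v}_{h^{L^*}}=1\}$ be the unit circle bundle endowed with the induced CR structure. Then $X$ is a compact, connected, orientable CR manifold of dimension $2n+1$; the non-degeneracy and signature $(n_-,n_+)$ of $R^L$ translate, via the standard comparison, into the same non-degeneracy and signature of the Levi form of $X$, so Assumption~\ref{a-gue210813yydII} is satisfied with $q=n_-$, and the natural fiberwise $S^1$-action is transversal and CR, providing the transversal CR $\mathbb R$-action required by Theorem~\ref{t-gue210706ycdg} and Theorem~\ref{t-gue210725yyd}. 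Under the usual Fourier decomposition along the fibers, $\Omega^{0,q}(M,L^k)$ is canonically identified with the $k$-th isotypical component $\Omega^{0,q}(X)_k$; through this identification $\Box^q_k$ becomes the restriction of $\Box^q_b$, the Bergman projection $B^{(q)}_k$ corresponds to the $k$-th Fourier component of $S_-$, and for $f\in\mathcal C^\infty(M)$ the Toeplitz operator $T_{f,k}$ corresponds to the $k$-th Fourier component of the CR Toeplitz operator $T_{\tilde f,-}$ on $X$, where $\tilde f$ is the $S^1$-invariant pullback of $f$.

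Applying Theorem~\ref{t-gue210706ycdg} and Theorem~\ref{t-gue210725yyd} to the $\mathbb R$-invariant symbols $\tilde f,\tilde g$ yields universal bidifferential operators $C_{j,-}$ on $X$ and a residue $R_N\in\Psi_{n-N-1}(X)$ with
\[T_{\tilde f,-}\circ T_{\tilde g,-}-\sum_{j=0}^N T_{C_{j,-}(\tilde f,\tilde g),-}=R_N,\]
satisfying $C_{0,-}(\tilde f,\tilde g)=\tilde f\tilde g$ and $C_{1,-}(\tilde f,\tilde g)-C_{1,-}(\tilde g,\tilde f)=\imath\{\tilde f,\tilde g\}_-$. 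Since $\tilde f,\tilde g$ are $S^1$-invariant, each $C_{j,-}(\tilde f,\tilde g)$ is basic and descends to a bidifferential operator $C_j(f,g)$ on $M$ of order $\leq 2j$; the transversal Poisson bracket $\{\tilde f,\tilde g\}_-$, which at a point $(x,\lambda\omega_0(x))\in\Sigma^-$ is a homogeneous function of $\lambda$, evaluated at the $k$-th Fourier weight $\lambda\propto -k$ produces exactly the factor $1/k$ multiplying $T_{\imath\{f,g\},k}$ in \eqref{e-gue210803ycdsz}, where $\{f,g\}$ is the Poisson bracket on $M$ associated with the symplectic form $-\imath R^L$. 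Restricting the displayed identity to the $k$-th Fourier component then gives the announced composition formula on $L^2_{(0,q)}(M,L^k)$ modulo the $k$-th Fourier component of $R_N$.

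The principal obstacle is the passage from $R_N\in\Psi_{n-N-1}(X)$ to the $L^2$ operator norm estimate $\norm{R_{N,k}}_{L^2}=O(k^{-N-1})$; this is supplied by (the proof of) Theorem~\ref{thm:compositionFouriercomponents}, where stationary-phase analysis of the Fourier-component phase of a complex FIO of Szegő type of order $m$ on the $(2n+1)$-dimensional manifold $X$ produces an $L^2$ operator norm bound of order $O(k^{m-n})$ for its $k$-th Fourier component. Applied with $m=n-N-1$ this yields \eqref{e-gue210803ycdtz}, and the case $N=1$ specializes to \eqref{e-gue210803ycdsz}. Associativity of \eqref{e-gue210805yydz} then follows formally: composition of operators on $L^k$ is associative for every $k$, and substituting the asymptotic expansion \eqref{e-gue210803ycdtz} in both sides of $(T_{f,k}\circ T_{g,k})\circ T_{h,k}=T_{f,k}\circ(T_{g,k}\circ T_{h,k})$, the uniqueness of asymptotic power series in $k^{-1}$ forces the coefficients of $(f*g)*h$ and $f*(g*h)$ to coincide at every order in $\nu^{-1}$.
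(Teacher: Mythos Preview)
Your overall strategy---pass to the circle bundle $X$ of $(L^*,h^{L^*})$, invoke the CR results there, and restrict to the $k$-th Fourier isotype---is exactly what the paper does. But there is a slip in the displayed identity and in how you extract the $L^2$ norm bound.

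Theorem~\ref{t-gue210706ycdg} does \emph{not} give
\[
T_{\tilde f,-}\circ T_{\tilde g,-}-\sum_{j=0}^N T_{C_{j,-}(\tilde f,\tilde g),-}\in\Psi_{n-N-1}(X)
\]
with $C_{j,-}(\tilde f,\tilde g)\in\mathcal C^\infty(X)$; it gives
\[
T_{\tilde f,-}\circ T_{\tilde g,-}-\sum_{j=0}^N \hat H^{\,j}\,T_{\hat C_{j,-}(\tilde f,\tilde g),-}\in\Psi_{n-N-1}(X),
\]
with the parametrix powers $\hat H^{\,j}$ (see \eqref{e-gue210706ycds} and \eqref{e-gue210706ycdh}). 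Without these, each $T_{C_j,-}$ with smooth $C_j$ is of order $n$, and there is no reason for the remainder to drop to order $n-N-1$. So the $\hat H^{\,j}$ cannot be suppressed.

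The paper's route to the $L^2$ estimate is then algebraic, not a direct stationary-phase bound on Fourier components of an operator in $\Psi_{n-N-1}(X)$. One multiplies the relation above on the left by $\hat R^{N+1}$; since $\hat R\hat H\equiv S^{(q)}$, this converts $\hat H^{\,j}$ into $\hat R^{N+1-j}$ and lands the whole expression in $\Psi_{n}(X)$, which is $L^2$-bounded by the bootstrap argument of Lemma~\ref{l-gue210812yydI} (write it as $T^G_h+R$ with $R\in\Psi_{n-1}$ and iterate $\norm{Ru}^2\leq\norm{(R^*R)^{2^N}}^{2^{-N}}\norm{u}^{2-2^{-N}}$). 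Finally, on the $k$-th Fourier isotype $\hat R$ acts as multiplication by $k$, so the resulting inequality reads exactly
\[
\Bigl\lVert k^{N+1}T_{f,k}\circ T_{g,k}-\sum_{j=0}^N k^{N+1-j}T_{C_j(f,g),k}\Bigr\rVert\leq C,
\]
which is \eqref{e-gue210803ycdtz}. The commutator estimate \eqref{e-gue210803ycdsz} and associativity are then obtained as you say.
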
 

This Theorem can be thought of as an analogue of Theorem $2.2$ in \cite{schli} in the more general setting of complex manifolds. We remark that the spaces of $(0,q)$-forms we are considering here can be identified with the kernel of the Dolbeault-Dirac operator, see equation~$(2.36)$, pag.~$16$ in~\cite{Du}. In general it is known that there exists a deformation for the algebra of smooth functions on $M$ in terms of a formal power series. The advantage of defining the star product in terms of Toeplitz operators is that we get the convergence of the corresponding power series.

In Section~\ref{s-gue210812yyd}, we consider a compact Lie group $G$ acting on $X$. We generalize the results  in \cite{hsiaohuang} to Toeplitz operators and to the action of $G$ on $X$ is locally free. We then investigate  the associated algebra of $G$-invariant Toeplitz operators. Finally, we study the associated Fourier component in the presence of a transversal locally free circle action. We refer the reader to Theorems~\ref{thm:toeplitz},~\ref{thm:composition},~\ref{t-gue210803ycd},~\ref{thm:kfourierszego}, \ref{thm:compositionFouriercomponents}, for the details. 

\section{Preliminaries}\label{s:prelim}

\subsection{Standard notations} \label{s-ssna}
We use the following notations through this article: $\mathbb N=\{1,2,\ldots\}$ is the set of natural numbers, $\mathbb N_0=\mathbb N\bigcup\{0\}$, $\mathbb R$ is the set of 
real numbers, $\overline{\mathbb R}=\{x\in\mathbb R;\, x>0\}$, $\overline{\mathbb R}_+=\{x\in\mathbb R;\, x\geq0\}$. 
We write $\alpha=(\alpha_1,\ldots,\alpha_n)\in\mathbb N^n_0$ 
if $\alpha_j\in\mathbb N_0$, 
$j=1,\ldots,n$. 

Let $M$ be a $\mathcal{C}^\infty$ paracompact manifold.
We let $TM$ and $T^*M$ denote the tangent bundle of $M$
and the cotangent bundle of $M$, respectively.
The complexified tangent bundle of $M$ and the complexified cotangent bundle of $M$ will be denoted by $\Complex TM$
and $\Complex T^*M$, respectively. Write $\langle\,\cdot\,,\cdot\,\rangle$ to denote the pointwise
duality between $TM$ and $T^*M$.
We extend $\langle\,\cdot\,,\cdot\,\rangle$ bilinearly to $\Complex TM\times\Complex T^*M$.
Let $B$ be a $C^\infty$ vector bundle over $M$. The fiber of $B$ at $x\in M$ will be denoted by $B_x$.
Let $E$ be a vector bundle over a $\mathcal{C}^\infty$ paracompact manifold $M_1$. We write
$B\boxtimes E^*$ to denote the vector bundle over $M\times M_1$ with fiber over $(x, y)\in M\times M_1$
consisting of the linear maps from $E_y$ to $B_x$.  Let $Y\subset M$ be an open set. 
From now on, the spaces of distribution sections of $B$ over $Y$ and
smooth sections of $B$ over $Y$ will be denoted by $\mathcal D'(Y, B)$ and $\mathcal{C}^\infty(Y, B)$, respectively.
Let $\mathcal E'(Y, B)$ be the subspace of $\mathcal D'(Y, B)$ whose elements have compact support in $Y$.

We recall the Schwartz kernel theorem.
Let $B$ and $E$ be $\mathcal{C}^\infty$ vector
bundles over paracompact orientable $\mathcal{C}^\infty$ manifolds $M$ and $M_1$, respectively, equipped with smooth densities of integration. If
$A: \mathcal{C}^\infty_0(M_1,E)\To\mathcal D'(M,B)$
is continuous, we write $A(x, y)$ to denote the distribution kernel of $A$.
The following two statements are equivalent
\begin{enumerate}
\item $A$ is continuous: $\mathcal E'(M_1,E)\To\mathcal{C}^\infty(M,B)$,
\item $A(x,y)\in\mathcal{C}^\infty(M\times M_1,B\boxtimes E^*)$.
\end{enumerate}
If $A$ satisfies (1) or (2), we say that $A$ is smoothing on $M \times M_1$. Let
$A,\hat A:\mathcal{C}^\infty_0(M_1,E)\To\mathcal{D}'(M,B)$ be continuous operators.
We write 
\begin{equation} \label{e-gue160507f}
\mbox{$A\equiv \hat A$ (on $M\times M_1$)} 
\end{equation}
if $A-\hat A$ is a smoothing operator. If $M=M_1$, we simply write ``on $M$". 
We say that $A$ is properly supported if the restrictions of the two projections 
$(x,y)\mapsto x$, $(x,y)\mapsto y$ to ${\rm supp\,}(A(x,y))$
are proper.

Let $H(x,y)\in\mathcal D'(M\times M_1,B\boxtimes E^*)$. We write $H$ to denote the unique 
continuous operator $\mathcal C^\infty_0(M_1,E)\To\mathcal D'(M,B)$ with distribution kernel $H(x,y)$. 
In this work, we identify $H$ with $H(x,y)$.

\subsection{CR manifolds} 

Let $(X, T^{1,0}X)$ be a compact, connected and orientable CR manifold of dimension $2n+1$, $n\geq 1$, where $T^{1,0}X$ is a CR structure of $X$, that is, $T^{1,0}X$ is a subbundle of rank $n$ of the complexified tangent bundle $\mathbb{C}TX$, satisfying $T^{1,0}X\cap T^{0,1}X=\{0\}$, where $T^{0,1}X=\overline{T^{1,0}X}$, and $[\mathcal V,\mathcal V]\subset\mathcal V$, where $\mathcal V=\mathcal{C}^\infty(X, T^{1,0}X)$. There is a unique subbundle $HX$ of $TX$ such that $\mathbb{C}HX=T^{1,0}X \oplus T^{0,1}X$, i.e. $HX$ is the real part of $T^{1,0}X \oplus T^{0,1}X$. Let $J:HX\To HX$ be the complex structure map given by $J(u+\ol u)=\imath u-\imath\ol u$, for every $u\in T^{1,0}X$. 
By complex linear extension of $J$ to $\mathbb{C}TX$, the $\imath$-eigenspace of $J$ is $T^{1,0}X \, = \, \left\{ V \in \mathbb{C}HX \, ;\, JV \, =  \,  \imath V  \right\}.$ We shall also write $(X, HX, J)$ to denote a CR manifold.

We  fix a real non-vanishing $1$ form $\omega_0\in\mathcal{C}^{\infty}(X,T^*X)$ so that $\langle\,\omega_0(x)\,,\,u\,\rangle=0$, for every $u\in H_xX$, for every $x\in X$. 
For each $x \in X$, we define a quadratic form on $HX$ by
\[
\mathcal{L}_x(U,V) =\frac{1}{2}d\omega_0(JU, V), \forall \ U, V \in H_xX.
\]
We extend $\mathcal{L}$ to $\mathbb{C}HX$ by complex linear extension. Then, for $U, V \in T^{1,0}_xX$,
\begin{equation}\label{e-gue210813yyd}
\mathcal{L}_x(U,\overline{V}) = \frac{1}{2}d\omega_0(JU, \overline{V}) = -\frac{1}{2i}d\omega_0(U,\overline{V}).
\end{equation}
The Hermitian quadratic form $\mathcal{L}_x$ on $T^{1,0}_xX$ is called Levi form at $x$. In this paper, we always assume that the Levi form $\mathcal{L}$ on $T^{1,0}X$ is non-degenerate of constant signature $(n_-,n_+)$ on $X$, where $n_-$ denotes the number of negative eigenvalues of the Levi form and $n_+$ denotes the number of positive eigenvalues of the Levi form. Let $R\in\mathcal{C}^\infty(X,TX)$ be the non-vanishing vector field determined by 
\begin{equation}\label{e-gue170111ry}\begin{split}
\omega_0(R)=-1,\quad
d\omega_0(R,\cdot)\equiv0\ \ \mbox{on $TX$}.
\end{split}\end{equation}
Note that $X$ is a contact manifold with contact form $\omega_0$, contact plane $HX$ and $R$ is the Reeb vector field.

Fix a smooth Hermitian metric $\langle\, \cdot \,|\, \cdot \,\rangle$ on $\mathbb{C}TX$ so that $T^{1,0}X$ is orthogonal to $T^{0,1}X$, $\langle\, u \,|\, v \,\rangle$ is real if $u, v$ are real tangent vectors, $\langle\,R\,|\,R\,\rangle=1$ and $R$ is orthogonal to $T^{1,0}X\oplus T^{0,1}X$. For $u \in \mathbb{C}TX$, we write $|u|^2 := \langle\, u\, |\, u\, \rangle$. Denote by $T^{*1,0}X$ and $T^{*0,1}X$ the dual bundles of $T^{1,0}X$ and $T^{0,1}X$, respectively. They can be identified with subbundles of the complexified cotangent bundle $\mathbb{C}T^*X$. Define the vector bundle of $(0,q)$-forms by $T^{*0,q}X := \wedge^qT^{*0,1}X$. The Hermitian metric $\langle\, \cdot\, |\, \cdot \,\rangle$ on $\mathbb{C}TX$ induces, by duality, a Hermitian metric on $\mathbb{C}T^*X$ and also on the bundles of $(0,q)$ forms $T^{*0,q}X, q=0, 1, \cdots, n$. We shall also denote all these induced metrics by $\langle \,\cdot \,|\, \cdot \,\rangle$. For $u\in T^{*0,q}X$, we write $\abs{u}^2:=\langle\,u\,|\,u\,\rangle$. Note that we have the pointwise orthogonal decompositions:
\[
\renewcommand{\arraystretch}{1.2}
\begin{array}{c}
\mathbb{C}T^*X = T^{*1,0}X \oplus T^{*0,1}X \oplus \left\{ \lambda \omega_0;\, \lambda \in \mathbb{C} \right\}, \\
\mathbb{C}TX = T^{1,0}X \oplus T^{0,1}X \oplus \left\{ \lambda R;\, \lambda \in \mathbb{C} \right\}.
\end{array}
\]

Let $D$ be an open set of $X$. Let $\Omega^{0,q}(D)$ denote the space of smooth sections of $T^{*0,q}X$ over $D$ and let $\Omega^{0,q}_c(D)$ be the subspace of $\Omega^{0,q}(D)$ whose elements have compact support in $D$. Let 
\[
\ddbar_b:\Omega^{0,q}(X)\To\Omega^{0,q+1}(X)
\]
be the tangential Cauchy-Riemann operator. Let $dv(x)$ be the volume form on $X$ induced by the Hermitian metric $\langle\,\cdot\,|\,\cdot\,\rangle$.
The natural global $L^2$ inner product $(\,\cdot\,|\,\cdot\,)$ on $\Omega^{0,q}(X)$ 
induced by $dv(x)$ and $\langle\,\cdot\,|\,\cdot\,\rangle$ is given by
\[
(\,u\,|\,v\,):=\int_X\langle\,u(x)\,|\,v(x)\,\rangle\, dv(x)\,,\quad u,v\in\Omega^{0,q}(X)\,.
\]
We denote by $L^2_{(0,q)}(X)$ 
the completion of $\Omega^{0,q}(X)$ with respect to $(\,\cdot\,|\,\cdot\,)$. 
We extend $(\,\cdot\,|\,\cdot\,)$ to $L^2_{(0,q)}(X)$ 
in the standard way. For $f\in L^2_{(0,q)}(X)$, we denote $\norm{f}^2:=(\,f\,|\,f\,)$.
We extend
$\ddbar_{b}$ to $L^2_{(0,r)}(X)$, $r=0,1,\ldots,n$, by
\[
\ddbar_{b}:{\rm Dom\,}\ddbar_{b}\subset L^2_{(0,r)}(X)\To L^2_{(0,r+1)}(X)\,,
\]
where ${\rm Dom\,}\ddbar_{b}:=\{u\in L^2_{(0,r)}(X);\, \ddbar_{b}u\in L^2_{(0,r+1)}(X)\}$ and, for any $u\in L^2_{(0,r)}(X)$, $\ddbar_{b} u$ is defined in the sense of distributions.
We also write
\[
\ol{\pr}^{*}_{b}:{\rm Dom\,}\ol{\pr}^{*}_{b}\subset L^2_{(0,r+1)}(X)\To L^2_{(0,r)}(X)
\]
to denote the Hilbert space adjoint of $\ddbar_{b}$ in the $L^2$ space with respect to $(\,\cdot\,|\,\cdot\, )$.
Let $\Box^{q}_{b}$ denote the (Gaffney extension) of the Kohn Laplacian given by
\begin{equation}\label{e-suIX}
\renewcommand{\arraystretch}{1.2}
\begin{array}{c}
{\rm Dom\,}\Box^q_{b}=\Big\{s\in L^2_{(0,q)}(X);\, 
s\in{\rm Dom\,}\ddbar_{b}\cap{\rm Dom\,}\ol{\pr}^{*}_{b},\,
\ddbar_{b}s\in{\rm Dom\,}\ol{\pr}^{*}_{b}, \, \ol{\pr}^{*}_{b}s\in{\rm Dom\,}\ddbar_{b}\Big\}\,,\\
\Box^{q}_{b}s=\ddbar_{b}\ol{\pr}^{*}_{b}s+\ol{\pr}^{*}_{b}\ddbar_{b}s
\:\:\text{for $s\in {\rm Dom\,}\Box^q_{b}$}\,.
 \end{array}
\end{equation}
Let
\begin{equation}\label{e-suXI-I}
S^{(q)}:L^2_{(0,q)}(X)\To{\rm Ker\,}\Box^q_b
\end{equation}
be the orthogonal projection with respect to the $L^2$ inner product $(\,\cdot\,|\,\cdot\,)$ and let
\[
S^{(q)}(x,y)\in\mathcal{D}'(X\times X,T^{*0,q}X\boxtimes(T^{*0,q}X)^*)
\]
denote the distribution kernel of $S^{(q)}$. 

We recall H\"ormander symbol space. Let $D\subset X$ be a local coordinate patch with local coordinates $x=(x_1,\ldots,x_{2n+1})$. 

\begin{defn}\label{d-gue140221a}
For $m\in\Real$, $S^m_{1,0}(D\times D\times\mathbb{R}_+,T^{*0,q}X\boxtimes(T^{*0,q}X)^*)$ 
is the space of all $a(x,y,t)\in\mathcal{C}^\infty(D\times D\times\mathbb{R}_+,T^{*0,q}X\boxtimes(T^{*0,q}X)^*)$ 
such that, for all compact $K\Subset D\times D$ and all $\alpha, \beta\in\mathbb N^{2n+1}_0$, $\gamma\in\mathbb N_0$, 
there is a constant $C_{\alpha,\beta,\gamma}>0$ such that 
\[\abs{\pr^\alpha_x\pr^\beta_y\pr^\gamma_t a(x,y,t)}\leq C_{\alpha,\beta,\gamma}(1+\abs{t})^{m-\gamma},\ \ 
\mbox{for every $(x,y,t)\in K\times\Real_+, t\geq1$}.\]
Put 
\[
S^{-\infty}(D\times D\times\mathbb{R}_+,T^{*0,q}X\boxtimes(T^{*0,q}X)^*) :=\bigcap_{m\in\Real}S^m_{1,0}(D\times D\times\mathbb{R}_+,T^{*0,q}X\boxtimes(T^{*0,q}X)^*).
\]
Let $a_j\in S^{m_j}_{1,0}(D\times D\times\mathbb{R}_+,T^{*0,q}X\boxtimes(T^{*0,q}X)^*)$, 
$j=0,1,2,\ldots$ with $m_j\To-\infty$, as $j\To\infty$. 
Then there exists $a\in S^{m_0}_{1,0}(D\times D\times\mathbb{R}_+,T^{*0,q}X\boxtimes(T^{*0,q}X)^*)$ 
unique modulo $S^{-\infty}$, such that 
$a-\sum^{k-1}_{j=0}a_j\in S^{m_k}_{1,0}(D\times D\times\mathbb{R}_+,T^{*0,q}X\boxtimes(T^{*0,q}X)^*\big)$ 
for $k=0,1,2,\ldots$. 

If $a$ and $a_j$ have the properties above, we write $a\sim\sum^{\infty}_{j=0}a_j$ (in 
$S^{m_0}_{1,0}\big(D\times D\times\mathbb{R}_+,T^{*0,q}X\boxtimes(T^{*0,q}X)^*\big)$ ). 
We write
\[
s(x, y, t)\in S^{m}_{{\rm cl\,}}(D\times D\times\mathbb{R}_+,T^{*0,q}X\boxtimes(T^{*0,q}X)^*)
\]
if $s(x, y, t)\in S^{m}_{1,0}(D\times D\times\mathbb{R}_+,T^{*0,q}X\boxtimes(T^{*0,q}X)^*)$ and 
\[
\renewcommand{\arraystretch}{1.2}
\begin{array}{lc}
&s(x, y, t)\sim\sum^\infty_{j=0}s^j(x, y)t^{m-j}\text{ in }S^{m}_{1, 0}
(D\times D\times\mathbb{R}_+\,,T^{*0,q}X\boxtimes(T^{*0,q}X)^*)\,,\\
&s^j(x, y)\in\mathcal{C}^\infty(D\times D,T^{*0,q}X\boxtimes(T^{*0,q}X)^*),\ j\in\N_0.\end{array}
\]

We sometimes simply write $S^{m}_{1,0}$ to denote $S^m_{1,0}(D\times D\times\mathbb{R}_+,T^{*0,q}X\boxtimes(T^{*0,q}X)^*)$, $m\in\mathbb R\bigcup\set{-\infty}$.  
\end{defn} 

Let $E$ be a smooth
vector bundle over $X$. Let $m\in\Real$, $0\leq\rho,\delta\leq1$. Let $S^m_{\rho,\delta}(X,E)$
denote the H\"{o}rmander symbol space on $X$ with values in $E$ of order $m$ type $(\rho,\delta)$
and let $S^m_{{\rm cl\,}}(X,E)$
denote the space of classical symbols on $X$ with values in $E$ of order $m$. 
 For $a\in S^m_{{\rm cl\,}}(X,E)$, we write ${\rm Op\,}(a)$ to denote the pseudodifferential operator on $X$ of order $m$ from sections of $E$ to sections of $E$ with full symbol $a$. 

Let $D\subset X$ be an open set. Let
$L^m_{{\rm cl\,}}(D,T^{*0,q}X\boxtimes (T^{*0,q}X)^*)$
denote the space of classical
pseudodifferential operators on $D$ of order $m$ from sections of
$T^{*0,q}X$ to sections of $T^{*0,q}X$ respectively. Let $P\in L^m_{{\rm cl\,}}(D,T^{*0,q}X\boxtimes (T^{*0,q}X)^*)$. We write $\sigma_P$ and $\sigma^0_P$ to denote the full symbol of $P$ and the principal symbol of $P$ respectively. 

\section{Toeplitz operators on CR manifolds}\label{s-gue210628yyd}

In this section, we will study the Toeplitz operator $T_P:=S^{(q)}\circ P\circ S^{(q)}$, for pseudodifferential operator $P$ and we will establish star product for the Toeplitz operators. We will assume that $X$ is compact. We first recall some results about Szeg\H{o} kernel expansions for $(0,q)$ forms obtained in~\cite{hsiao}. We recall that in this paper, we always assume that the Levi form $\mathcal{L}$ on $T^{1,0}X$ is non-degenerate of constant signature $(n_-,n_+)$ on $X$ and We will always let $q=n_-$. 

\subsection{Szeg\H{o} kernel asymptotics}\label{s-gue210628yydI} 
The characteristic set of $\Box^q_b$ is given by 
\begin{equation}\label{e-gue210707yydI}
\begin{split}
&\Sigma=\Sigma^-\bigcup\Sigma^+,\\
&\Sigma^-=\set{(x,\lambda\omega_0(x))\in T^*X;\,\lambda<0},\\
&\Sigma^+=\set{(x,\lambda\omega_0(x))\in T^*X;\,\lambda>0}.
\end{split}
\end{equation}

We have the following (see~\cite[Theorem 1.2]{hsiao},~\cite[Theorem 4.7]{HM14})

\begin{thm}\label{t-gue161109I}
Suppose that $\Box^q_b$ has $L^2$ closed range. Then, there exist continuous operators
$S_-, S_+: L^2_{(0,q)}(X)\To{\rm Ker\,}\Box^q_b$ such that 
\begin{equation}\label{e-gue210707yyd}
\begin{split}
&S^{(q)}=S_-+S_+,\\
&{\rm WF'\,}(S_-)={\rm diag\,}(\Sigma^-\times\Sigma^-),\\
&S_+\equiv0\ \ \mbox{if $q\neq n_+$},\\
&{\rm WF'\,}(S_+)={\rm diag\,}(\Sigma^+\times\Sigma^+)\ \ \mbox{if $q=n_-=n_+$},
\end{split}
\end{equation}
where ${\rm WF'\,}(S_-)=\set{(x,\xi,y,\eta)\in T^*X\times T^*X;\,(x,\xi,y,-\eta)\in{\rm WF\,}(S_-)}$, ${\rm WF\,}(S_-)$ is the wave front set of $S_-$ in the sense of H\"ormander. 

Moreover, let $D\subset X$ be any local coordinate patch with local coordinates $x=(x_1,\ldots,x_{2n+1})$, then 
 $S_-(x,y)$, $S_+(x,y)$ satisfy
\[
S_{\mp}(x, y)\equiv\int^{\infty}_{0}e^{i\varphi_{\mp}(x, y)t}s_{\mp}(x, y, t)dt\ \ \mbox{on $D$},
\]
with 
\begin{equation}  \label{e-gue161110r}
\renewcommand{\arraystretch}{1.2}
\begin{array}{ll}
&s_-(x, y, t), s_+(x,y,t)\in S^{n}_{{\rm cl\,}}(D\times D\times\mathbb{R}_+,T^{*0,q}X\boxtimes(T^{*0,q}X)^*), \\
&s_{\mp}(x,y,t)\sim\sum^\infty_{j=0}s^j_{\mp}(x, y)t^{n-j}\text{ in }S^{n}_{1, 0}
(D\times D\times\mathbb{R}_+\,,T^{*0,q}X\boxtimes(T^{*0,q}X)^*),\\
&s^j_{\mp}(x, y)\in\mathcal{C}^\infty(D\times D,T^{*0,q}X\boxtimes(T^{*0,q}X)^*),\ j\in\mathbb N_0,\\
&s_+(x,y,t)=0\ \ \mbox{if $q\neq n_+$},\\
&s^0_-(x,x)\neq0,\ \ \mbox{for all $x\in D$},
\end{array}
\end{equation}
and the phase functions $\varphi_-$, $\varphi_+$  satisfy
\[
\renewcommand{\arraystretch}{1.2}
\begin{array}{ll}
&\varphi_+(x,y), \varphi_-\in\mathcal{C}^\infty(D\times D),\ \ {\rm Im\,}\varphi_{\mp}(x, y)\geq0,\\
&\varphi_-(x, x)=0,\ \ \varphi_-(x, y)\neq0\ \ \mbox{if}\ \ x\neq y,\\
&d_x\varphi_-(x, y)\big|_{x=y}=-\omega_0(x), \ \ d_y\varphi_-(x, y)\big|_{x=y}=\omega_0(x), \\
&-\ol\varphi_+(x, y)=\varphi_-(x,y).
\end{array}
\]
\end{thm}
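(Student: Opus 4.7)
The plan is to build $S_-$ and $S_+$ as complex Fourier integral operators of Szeg\H{o} type whose distribution kernels have the stated oscillatory form, and then to identify $S_-+S_+$ with the $L^2$-orthogonal projection $S^{(q)}$ modulo a smoothing operator. The two pieces are necessary because the principal symbol of $\Box^q_b$ vanishes to exactly second order along $\Sigma=\Sigma^-\cup\Sigma^+$; each component must be handled separately by a microlocal parametrix construction for operators with positive double characteristics, in the spirit of Boutet de Monvel--Sj\"ostrand and Melin--Sj\"ostrand.

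\textbf{Phase, amplitude, and the role of the signature.} Near $\Sigma^-$, I would freeze coordinates and examine the transverse Hessian of $\sigma^0_{\Box^q_b}$ together with its subprincipal symbol acting on $T^{*0,q}X$. Non-degeneracy of the Levi form makes the transverse Hessian non-degenerate, and an eigenvalue count for the resulting subprincipal model operator shows that its kernel on $(0,q)$ forms is non-trivial exactly when $q=n_-$ near $\Sigma^-$ (and $q=n_+$ near $\Sigma^+$); this is what forces $S_+\equiv 0$ when $q\neq n_+$. The phase $\varphi_-$ is then obtained as an almost-analytic solution of the complex eikonal equation, with normalisations $\varphi_-(x,x)=0$, $d_x\varphi_-|_{x=y}=-\omega_0$, $d_y\varphi_-|_{x=y}=\omega_0$ dictated by the requirement that the associated Lagrangian project onto $\Sigma^-$; positivity ${\rm Im\,}\varphi_-\geq0$ with a second-order zero on the diagonal follows from the signature $(n_-,n_+)$ of the Levi form. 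The phase $\varphi_+$ is then given by the conjugation $-\overline{\varphi_+}=\varphi_-$. The symbol $s_-(x,y,t)\in S^n_{{\rm cl\,}}$ is built inductively by solving the transport equations produced by stationary phase applied to $\Box^q_b\circ S_-$, the leading term $s^0_-(x,x)$ being the non-zero orthogonal projector onto the kernel of the model subprincipal operator on $T^{*0,q}X$.

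\textbf{Matching the parametrix with $S^{(q)}$.} The main obstacle is to identify the microlocally constructed operator with the genuine orthogonal projection $S^{(q)}$. Using the $L^2$ closed-range hypothesis, I would form the partial inverse $N$ of $\Box^q_b$ on $({\rm Ker\,}\Box^q_b)^{\perp}$ to write $S^{(q)}=I-N\Box^q_b=I-\Box^q_b N$ on the appropriate domains, and then check that the operator $S_-+S_+$ produced by the parametrix construction maps into ${\rm Ker\,}\Box^q_b$ modulo smoothing and has wave-front set contained in ${\rm diag}(\Sigma\times\Sigma)$. A uniqueness-of-parametrix argument, exploiting the disjointness of $\Sigma^-$ and $\Sigma^+$ together with the matching of leading symbols of the two projection-like operators on the diagonal, then yields $S^{(q)}\equiv S_-+S_+$ modulo smoothing and simultaneously establishes the stated wave-front set properties for $S_-$ and $S_+$.
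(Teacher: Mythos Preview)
The paper does not prove this theorem: it is quoted as background, with the sentence ``We have the following (see~\cite[Theorem 1.2]{hsiao},~\cite[Theorem 4.7]{HM14})'' immediately preceding the statement and no proof following it. Your outline is a reasonable sketch of the argument carried out in those references (Hsiao's m\'emoire and Hsiao--Marinescu), namely the Boutet de Monvel--Sj\"ostrand style construction of a complex Fourier integral parametrix near each component $\Sigma^{\mp}$, the signature analysis of the subprincipal model operator that singles out $q=n_-$ (resp.\ $q=n_+$), and the use of closed range to upgrade the parametrix identity to a genuine one for the orthogonal projection. One small correction: the decomposition $S^{(q)}=S_-+S_+$ in the statement is an exact equality of $L^2$-bounded operators, not merely modulo smoothing; what holds modulo smoothing is the oscillatory-integral description of each piece, and one absorbs the smoothing error into either $S_-$ or $S_+$ to get operators mapping exactly into $\mathrm{Ker}\,\Box^q_b$.
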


\begin{rem}\label{r-gue180404}
 Kohn~\cite{Koh86} proved that if $q=n_-=n_+$ or $\abs{n_--n_+}>1$ then $\Box^q_b$ has $L^2$ closed range.
\end{rem} 

The following result describes the phase function in local coordinates (see chapter 8 of part I in \cite{hsiao})

\begin{thm} \label{t-gue161110g}
For a given point $p\in X$, let $\{W_j\}_{j=1}^{n}$
be an orthonormal frame of $T^{1, 0}X$ in a neighborhood of $p$
such that
the Levi form is diagonal at $p$, i.e.\ $\mathcal{L}_{p}(W_{j},\overline{W}_{s})=\delta_{j,s}\mu_{j}$, $j,s=1,\ldots,n$.
We take local coordinates
$x=(x_1,\ldots,x_{2n+1})$, $z_j=x_{2j-1}+ix_{2j}$, $j=1,2,\ldots,n$,
defined on some neighborhood of $p$ such that $\omega_0(p)=dx_{2n+1}$, $x(p)=0$, 
and, for some $c_j\in\Complex$, $j=1,\ldots,n$\,,
\begin{equation}\label{e-gue161219a}
\begin{split}
&R=-\frac{\pr}{\pr x_{2n+1}},\\
&W_j=\frac{\pr}{\pr z_j}-i\mu_j\ol z_j\frac{\pr}{\pr x_{2n+1}}-
c_jx_{2n+1}\frac{\pr}{\pr x_{2n+1}}+\sum^{2n}_{k=1}a_{j,k}(x)\frac{\pr}{\pr x_k}+O(\abs{x}^2),\ j=1,\ldots,n\,,
\end{split}
\end{equation}
where $a_{j,k}(x)\in\mathcal{C}^\infty$, $a_{j,k}(x)=O(\abs{x})$, for every $j=1,\ldots,n$, $k=1,\ldots,2n$. 
Set
$y=(y_1,\ldots,y_{2n+1})$, $w_j=y_{2j-1}+iy_{2j}$, $j=1,2,\ldots,n$.
Then, for $\varphi_-$ in Theorem~\ref{t-gue161109I}, we have
\begin{equation} \label{e-gue140205VI}
{\rm Im\,}\varphi_-(x,y)\geq c\sum^{2n}_{j=1}\abs{x_j-y_j}^2,\ \ c>0,
\end{equation}
in some neighbourhood of $(0,0)$ and
\begin{equation} \label{e-gue140205VII}
\renewcommand{\arraystretch}{1.2}
\begin{array}{l}
\varphi_-(x, y)=-x_{2n+1}+y_{2n+1}+i\sum^{n}_{j=1}\abs{\mu_j}\abs{z_j-w_j}^2 +\sum^{n}_{j=1}\Bigr(i\mu_j(\ol z_jw_j-z_j\ol w_j)\\
\ +c_j(-z_jx_{2n+1}+w_jy_{2n+1})+\ol c_j(-\ol z_jx_{2n+1}+\ol w_jy_{2n+1})\Bigr)\\
\ +(x_{2n+1}-y_{2n+1})f(x, y) +O(\abs{(x, y)}^3),
\end{array}
\end{equation}
where $f$ is smooth and satisfies $f(0,0)=0$.
\end{thm}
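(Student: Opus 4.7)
The plan is to determine $\varphi_-$ from its characterizing properties in Theorem~\ref{t-gue161109I} by solving the associated complex eikonal equation in the normal coordinates fixed by \eqref{e-gue161219a}, and then read off the explicit formula order by order.

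First, I would use the normalizations $\varphi_-(x,x)=0$, $d_x\varphi_-|_{x=y}=-\omega_0(x)$, and $d_y\varphi_-|_{x=y}=\omega_0(x)$ from Theorem~\ref{t-gue161109I}. Since $\omega_0(p)=dx_{2n+1}$ and $x(p)=0$, this immediately fixes the constant and linear Taylor coefficients of $\varphi_-$ at $(0,0)$ and yields the linear piece $-x_{2n+1}+y_{2n+1}$ in \eqref{e-gue140205VII}.

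Second, I would exploit the fact that, modulo smoothing, $S_-$ annihilates $\ol\pr_b^*$ from the right and $\ol\pr_b$ from the left. This forces the identities $\ol W_j^x\,\varphi_-(x,y)$ and $W_j^y\,\varphi_-(x,y)$ to vanish to infinite order on the diagonal for each $j=1,\ldots,n$. Substituting the normal form
\[\ol W_j=\frac{\pr}{\pr\ol z_j}+\imath\mu_jz_j\frac{\pr}{\pr x_{2n+1}}-\ol c_jx_{2n+1}\frac{\pr}{\pr x_{2n+1}}+\sum_{k=1}^{2n}\ol{a_{j,k}(x)}\frac{\pr}{\pr x_k}+O(\abs{x}^2)\]
into $\ol W_j^x\varphi_-\equiv 0$ (on the diagonal) and matching Taylor coefficients in $(x,y)$ at orders $1$ and $2$ produces a linear system whose solution is precisely the quadratic part of \eqref{e-gue140205VII}. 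The term $\imath\abs{\mu_j}\abs{z_j-w_j}^2$ arises from cross-differentiating $\pr_{\ol z_j}^x$ with $\pr_{w_j}^y$ in combination with the $\imath\mu_jz_j\pr_{x_{2n+1}}$ piece of $\ol W_j$; the absolute value $\abs{\mu_j}$ rather than $\mu_j$ is dictated by the choice $q=n_-$, since the eikonal equation on $\Sigma^-$ selects the branch that forces ${\rm Im\,}\varphi_-\geq 0$. The real antisymmetric contribution $\imath\mu_j(\ol z_jw_j-z_j\ol w_j)$ and the $c_j,\ol c_j$ terms emerge similarly from the Reeb-direction coupling in $\ol W_j$ and its adjoint $W_j^y$, while the freedom encoded in $(x_{2n+1}-y_{2n+1})f(x,y)+O(\abs{(x,y)}^3)$ corresponds to alterations of $\varphi_-$ that are flat along $\Sigma^-$ and hence do not change the Szeg\H{o} kernel modulo smoothing.

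Third, the positivity estimate \eqref{e-gue140205VI} follows by inspection of the quadratic part just computed: restricted to $(x_1,\ldots,x_{2n})$ at $x_{2n+1}=y_{2n+1}$, one finds ${\rm Im\,}\varphi_-(x,y)=\sum_{j=1}^n\abs{\mu_j}\abs{z_j-w_j}^2+O(\abs{(x,y)}^3)$. Since the Levi form is non-degenerate, all $\abs{\mu_j}>0$, so this quadratic form is positive definite; continuity produces a uniform lower bound $c>0$ in a small neighborhood of $(0,0)$.

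The main obstacle is carrying out the second step cleanly: the order-by-order Taylor matching mixes real and imaginary contributions, and selecting the correct branch $\abs{\mu_j}$ rather than $\mu_j$ requires tracking which of the two solutions of the complex eikonal equation has non-negative imaginary part on $\Sigma^-$. In addition one must justify that the remainder $(x_{2n+1}-y_{2n+1})f(x,y)+O(\abs{(x,y)}^3)$ with $f(0,0)=0$ is a genuine flat perturbation, i.e., that $\varphi_-$ is uniquely determined by its restriction to, and transverse first derivatives on, the diagonal, modulo such flat terms. This is the Melin--Sj\"ostrand uniqueness statement for solutions of the complex eikonal equation at a Lagrangian with positive-definite imaginary Hessian, and it is what makes the explicit formula \eqref{e-gue140205VII} unambiguous up to the admissible error indicated.
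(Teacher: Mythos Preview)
The paper does not actually prove this theorem in the text; it is quoted from the reference \cite{hsiao} (see the sentence preceding the statement: ``see chapter 8 of part I in \cite{hsiao}''). Your outline---fixing the linear part from the boundary data $d_x\varphi_-|_{x=y}=-\omega_0$, then solving the complex eikonal constraints $\ol W_j^x\varphi_-$ and $W_j^y\varphi_-$ vanishing to infinite order at the diagonal by Taylor matching, selecting the branch with ${\rm Im\,}\varphi_-\geq 0$, and invoking Melin--Sj\"ostrand uniqueness for the remainder---is precisely the route taken in that reference, so your plan is correct and aligned with the intended proof.

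One small clarification: the appearance of $\abs{\mu_j}$ rather than $\mu_j$ in the quadratic form is \emph{not} tied to the choice $q=n_-$. The phase $\varphi_-$ is attached to the component $\Sigma^-$ of the characteristic variety and is the same for every $q$; what $q=n_-$ governs is only whether the amplitude $s_-$ is nonzero (through the projector $\tau_{x,n_-}$). The branch selection $\abs{\mu_j}$ comes purely from the requirement ${\rm Im\,}\varphi_-\geq 0$ on the real domain, i.e.\ from choosing the positive solution of the model eikonal equation at $\Sigma^-$. With that caveat, your sketch is sound.
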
 

We pause and introduce some notations. For a given point $p\in X$, let $\{W_j\}_{j=1}^{n}$ be an
orthonormal frame of $(T^{1,0}X,\langle\,\cdot\,|\,\cdot\,\rangle)$ near $p$, for which the Levi form
is diagonal at $p$. Put
\[
\mathcal{L}_{p}(W_j,\ol W_\ell)=\mu_j(p)\delta_{j,\ell}\,,\;\; j,\ell=1,\ldots,n\,.
\]
We will denote by
\begin{equation}\label{det140530}
\det\mathcal{L}_{p}=\prod_{j=1}^{n}\mu_j(p)\,.
\end{equation}
Let $\{e_j\}_{j=1}^{n}$ denote the basis of $T^{*0,1}X$, dual to $\{\ol W_j\}^{n}_{j=1}$. We assume that
$\mu_j(p)<0$ if $1\leq j\leq n_-$ and $\mu_j(p)>0$ if $n_-+1\leq j\leq n$. Put
\[
\renewcommand{\arraystretch}{1.2}
\begin{array}{ll}
&\mathcal{N}(p,n_-):=\set{ce_1(p)\wedge\ldots\wedge e_{n_-}(p);\, c\in\Complex},\\
&\mathcal{N}(p,n_+):=\set{ce_{n_-+1}(p)\wedge\ldots\wedge e_{n}(p);\, c\in\Complex}
\end{array}
\]
and let
\begin{equation}\label{tau140530}
\tau_{p,n_-}:T^{*0,q}_{p}X\To\mathcal{N}(p,n_-)\,,\quad
\tau_{p,n_+}:T^{*0,q}_{p}X\To\mathcal{N}(p,n_+)\,,
\end{equation}
be the orthogonal projections onto $\mathcal{N}(p,n_-)$ and $\mathcal{N}(p,n_+)$
with respect to $\langle\,\cdot\,|\,\cdot\,\rangle$, respectively. For $J=(j_1,\ldots,j_q)$, $1\leq j_1<\cdots<j_q\leq n$, let 
$e_J:=e_{j_1}\wedge\cdots\wedge e_{j_q}$. For $\abs{I}=\abs{J}=q$, $I$, $J$ are strictly increasing, let $e_I\otimes(e_J)^*$ be the linear transformation from 
$T^{*0,q}X$ to $T^{*0,q}X$ given by 
\[(e_I\otimes(e_J)^*)(e_K)=\delta_{J,K}e_I,\]
for every $\abs{K}=q$, $K$ is strictly increasing, where $\delta_{J,K}=1$ if $J=K$, $\delta_{J,K}=0$ if $J\neq K$. For any $f\in T^{*0,q}X\boxtimes(T^{*0,q}X)^*$, we have \[f=\sideset{}{'}\sum_{\abs{I}=\abs{J}=q}c_{I,J}e_I\otimes(e_J)^*,\]
$c_{I,J}\in\mathbb C$, for all $\abs{I}=\abs{J}=q$, $I$, $J$ are strictly increasing, where $\sum'$ means that the summation is performed only over strictly increasing multi-indices.  We call $c_{I,J}e_I\otimes(e_J)^*$ the component of $f$ in the direction $e_I\otimes(e_J)^*$. Let $I_0=(1,2,\ldots,q)$. We can check that 
\[\tau_{p,n_-}=e_{I_0}(p)\otimes(e_{I_0}(p))^*.\]

The following formula for the leading term $s^0_-$ on the diagonal follows from \cite[\S 9]{hsiao}. The formula for the leading term $s^0_+$ on the diagonal follows similarly.

\begin{thm} \label{t-gue140205III}
For the leading term $s^0_{\mp}(x,y)$ of the expansion \eqref{e-gue161110r} of $s_{\mp}(x,y,t)$, we have
\[
s^0_{\mp}(x_0, x_0)=\frac{1}{2}\pi^{-n-1}\abs{\det\mathcal{L}_{x_0}}\tau_{x_0,n_{\mp}}\,,\:\:x_0\in D.
\]
\end{thm}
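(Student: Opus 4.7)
The plan is to determine $s^0_\mp(x_0, x_0)$ by combining two algebraic constraints at $x_0$: a transport constraint coming from $\Box^q_b S_\mp \equiv 0$, and a normalization constraint coming from the approximate idempotency $S_\mp \circ S_\mp \equiv S_\mp$ modulo smoothing. Because the answer is purely local, I would work in the special coordinates of Theorem~\ref{t-gue161110g} centered at $x_0$, in which $\varphi_-(x,y)$ takes the explicit normal form \eqref{e-gue140205VII}.

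First I would plug the oscillatory integral representation of $S_-$ into the equation $\Box^q_b S_- \equiv 0$, expand the result in descending powers of $t$, and collect the leading ($t^{n+1}$) coefficient. Evaluating at $x = y = 0$ and using $d_x\varphi_-(x,y)|_{x=y}=-\omega_0(x)$ together with \eqref{e-gue161219a}, this coefficient becomes an algebraic equation $\mathcal{M}(x_0)\cdot s^0_-(0,0)=0$, where $\mathcal{M}(x_0)$ is the endomorphism of $T^{*0,q}_{x_0} X$ obtained by freezing coefficients at $x_0$; it coincides with the symbol of the model Kohn Laplacian attached to the Levi form $\mathcal{L}_{x_0}$. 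The kernel of $\mathcal{M}(x_0)$ is one-dimensional and equal to $\mathcal{N}(x_0, n_-)=\Complex\, e_1(x_0)\wedge\cdots\wedge e_{n_-}(x_0)$. Hence $s^0_-(x_0, x_0)=c(x_0)\,\tau_{x_0, n_-}$ for some scalar $c(x_0)\in\Complex$, which reduces the problem to identifying this scalar.

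To compute $c(x_0)$, I would use $S_-\circ S_- \equiv S_-$. Substituting the oscillatory integral representation on both sides produces a composed oscillatory integral with phase $\varphi_-(x,w)t+\varphi_-(w,y)s$, integrated against $s_-(x,w,t)\circ s_-(w,y,s)$. For fixed $x=y=0$ and $t>0$, I would perform complex stationary phase in the variables $w$ and $s$: the quadratic part of $\varphi_-$ in \eqref{e-gue140205VII} has a unique nondegenerate critical point, and the resulting Gaussian integral has Hessian determinant proportional to $\prod_{j=1}^n|\mu_j(x_0)|=|\det\mathcal{L}_{x_0}|$. Matching the leading $t^n$ term on both sides of the composition equation gives a closed algebraic relation for $c(x_0)$ whose unique solution is $c(x_0)=\tfrac{1}{2}\pi^{-n-1}|\det\mathcal{L}_{x_0}|$. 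The factor of $\tfrac{1}{2}$ originates from the half-line $t\in(0,\infty)$ in the oscillatory integral (as opposed to a full Fourier integral over $\Real$), which under the symmetry $-\overline{\varphi_+}=\varphi_-$ also explains why $s^0_+$ takes the analogous form with $\tau_{x_0, n_+}$.

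The main technical hurdle is the stationary-phase step: one has to keep track of how the form-valued symbol composition yields exactly the projector $\tau_{x_0, n_-}$, how the signs of the Levi eigenvalues produce $|\det\mathcal{L}_{x_0}|$ rather than $\det\mathcal{L}_{x_0}$, and how the various $2\pi$ factors conspire to give $\pi^{-n-1}$. These bookkeeping issues are carried out explicitly in \cite[\S 9]{hsiao} by reducing to the Heisenberg model, and I would follow that computation faithfully rather than redo the constants from scratch, since the present statement is essentially a reformulation, in the current notation, of the diagonal asymptotic proved there.
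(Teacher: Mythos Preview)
Your proposal is correct and aligns with the paper's own treatment: the paper does not give an independent proof but simply states that the formula ``follows from \cite[\S 9]{hsiao}'' (with the $s^0_+$ case handled similarly), which is exactly the reference you defer to for the stationary-phase bookkeeping. Your outline of the two-step strategy (the transport constraint from $\Box^q_b S_-\equiv 0$ forcing the range into $\mathcal{N}(x_0,n_-)$, followed by the idempotency $S_-\circ S_-\equiv S_-$ to fix the scalar) is indeed the mechanism underlying that reference, so you have supplied more detail than the paper itself.
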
 

\subsection{Toeplitz operators on CR manifolds}\label{s-gue210628yydII}

From now on, we assume that $\Box^q_b$ has $L^2$ closed range. We need 

\begin{defn}\label{d-gue210624yyd}
Let $H: \Omega^{0,q}(X)\To\Omega^{0,q}(X)$ be a continuous operator with distribution kernel $H(x,y)\in\mathcal{D}'(X\times X,T^{*0,q}X\boxtimes(T^{*0,q}X)^*)$. 
We say that $H$ is a complex Fourier integral operator of Szeg\H{o} type of order $k\in\mathbb Z$ if $H$ is smoothing away the diagonal on $X$ and 
for every local coordinate patch $D\subset X$ with local coordinates $x=(x_1,\ldots,x_{2n+1})$, we have 
\[\begin{split}
&H(x,y)\equiv H_-(x,y)+H_+(x,y)\ \ \mbox{on $D$},\\
&\mbox{$H_-(x,y)\equiv\int^\infty_0e^{i\varphi_{-}(x, y)t}a_-(x, y, t)dt$ on $D$},\\
&\mbox{$H_+(x,y)\equiv\int^\infty_0e^{i\varphi_{+}(x, y)t}a_+(x, y, t)dt$ on $D$},\end{split}\]
where $a_-, a_+\in S^{k}_{{\rm cl\,}}(D\times D\times\mathbb{R}_+,T^{*0,q}X\boxtimes(T^{*0,q}X)^*)$, $a_+=0$ if $q\neq n_+$, $\varphi_-$, $\varphi_+$ are as in 
Theorem~\ref{t-gue161110g}. We write $\sigma^0_{H,-}(x,y)$ to denote the leading term of the expansion \eqref{e-gue161110r} of $a_{-}(x,y,t)$. If $q=n_+$, we write $\sigma^0_{H,+}(x,y)$ to denote the leading term of the expansion \eqref{e-gue161110r} of $a_{+}(x,y,t)$. Note that $\sigma^0_{H,-}(x,y)$ and $\sigma^0_{H,+}(x,y)$ depend on the choices of the phases $\varphi_-$ and $\varphi_+$ but $\sigma^0_{H,-}(x,x)$ and $\sigma^0_{H,+}(x,x)$ are independent of the choices of the phases $\varphi_-$ and $\varphi_+$.

Let $\Psi_k(X)$ denote the space of all complex Fourier integral operators of Szeg\H{o} type of order $k$. 
\end{defn}

We need 

\begin{lem}\label{l-gue210624yyd}
Let $B\in\Psi_k(X)$ with $S^{(q)}B\equiv B\equiv BS^{(q)}$. Let $x=(x_1,\ldots,x_{2n+1})$ be local coordinates of $X$ defined on an open set $D\subset X$. 
Assume that 
\[\mbox{$\tau_{x,n_-}\sigma^0_{B,-}(x,x)\tau_{x,n_-}=0$, for every $x\in D$},\]
and 
\[\mbox{$\tau_{x,n_+}\sigma^0_{B,+}(x,x)\tau_{x,n_+}=0$, for every $x\in D$},\]
if $q=n_+$. Then, $B\in\Psi_{k-1}(X)$. 
\end{lem}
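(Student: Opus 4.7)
The plan is to reduce the vanishing of the diagonal leading amplitude of the sandwich $S^{(q)}BS^{(q)}$ to the vanishing of the leading amplitude of a single Szeg\H{o}-type FIO, and then to invoke the symbol calculus for such operators to conclude the order drop.

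\textit{Step 1 (reduction to pure $\mp$-sandwiches).} Since $B\equiv S^{(q)}BS^{(q)}$ and $S^{(q)}=S_-+S_+$, and by Definition~\ref{d-gue210624yyd} we have $B\equiv B_-+B_+$ locally with $B_\mp$ carrying phase $\varphi_\mp$, every cross term $S_\mp B_\pm$ or $B_\mp S_\pm$ is smoothing: the wave-front relation would have to pair a covector in $\Sigma^-$ with one in $\Sigma^+$, which is impossible since $\Sigma^-\cap\Sigma^+=\emptyset$. Hence $B\equiv S_-B_-S_-+S_+B_+S_+$ on $X$, and it suffices to show each of these two sandwiches lies in $\Psi_{k-1}(X)$, with the $+$-summand only relevant when $q=n_+$.

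\textit{Step 2 (diagonal vanishing of the leading amplitude).} I would apply complex stationary phase of Melin-Sj\"ostrand / Boutet de Monvel-Sj\"ostrand type (as developed in \cite{hsiao} and recalled in Section~\ref{s-gue210628yydI}) to the threefold composition $S_-B_-S_-$, all three factors carrying the common phase $\varphi_-$. The outcome is again a Szeg\H{o}-type FIO of order $k$ with phase $\varphi_-$ and a classical amplitude $c_-\in S^k_{\rm cl}$; a direct computation of the stationary-phase normalization yields
\[c^0_-(x,x)=K_-(x)\,s^0_-(x,x)\,\sigma^0_{B,-}(x,x)\,s^0_-(x,x),\]
for some nowhere-zero universal scalar $K_-(x)$. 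Theorem~\ref{t-gue140205III} identifies $s^0_-(x,x)$ with a non-zero scalar multiple of $\tau_{x,n_-}$, so
\[c^0_-(x,x)=(\text{nonzero})\cdot\tau_{x,n_-}\,\sigma^0_{B,-}(x,x)\,\tau_{x,n_-},\]
which vanishes on $D$ by the hypothesis. The analogous calculation with $\varphi_+$ in the case $q=n_+$ gives $c^0_+(x,x)=0$.

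\textit{Step 3 (from diagonal vanishing to order drop).} One then upgrades ``leading amplitude zero on the diagonal'' to ``order $k-1$'' by the standard Szeg\H{o}-type symbol calculus: write $c^0_-(x,y)=\sum_j(x_j-y_j)h_j(x,y)$ via Taylor expansion at $y=x$, and exploit the identity $(x_j-y_j)e^{i\varphi_-t}=\tfrac{1}{it}L_j\bigl(e^{i\varphi_-t}\bigr)$ modulo a one order lower amplitude, where $L_j$ is a first-order differential operator built from the inverse of the non-degenerate transversal complex Hessian of $\varphi_-$; this non-degeneracy is precisely \eqref{e-gue140205VI} together with the normalization $d_y\varphi_-(x,y)|_{y=x}=\omega_0(x)$. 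Integrating by parts in $t$ lowers the asymptotic order by one, so $S_-B_-S_-\in\Psi_{k-1}(X)$; the same for $S_+B_+S_+$. Combined with Step 1, $B\in\Psi_{k-1}(X)$.

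\textit{Main obstacle.} The principal technical step is the explicit stationary-phase identity in Step 2: tracking the Gaussian normalization of the threefold complex phase $\varphi_-(x,u)t_1+\varphi_-(u,v)t_2+\varphi_-(v,y)t_3$ in the auxiliary variables $(u,v,t_1,t_2,t_3)$, verifying that the critical set is the total diagonal and that the correct ordered sandwich $\tau_{x,n_-}\sigma^0_{B,-}\tau_{x,n_-}$ emerges at leading order, is where the bookkeeping is delicate. Steps 1 and 3 are then essentially formal consequences of wave-front and symbol calculus.
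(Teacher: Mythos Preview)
Your Steps 1 and 2 are fine and indeed mirror what the paper does: composing with $S_\mp$ on both sides and reading off the leading amplitude on the diagonal yields $c^0_-(x,x)=0$. The genuine gap is Step 3.

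For Szeg\H{o}-type FIOs the implication ``leading amplitude vanishes on the diagonal $\Rightarrow$ order drops by one'' is \emph{false} in general, and your integration-by-parts device does not repair it. Recall from \eqref{e-gue140205VII} and \eqref{e-gue140205VI} that, after Malgrange preparation, $\varphi_-(x,y)=-x_{2n+1}+y_{2n+1}+\hat\varphi_-(x,y')$ with $\hat\varphi_-=O(\abs{x'-y'}^2)$ and ${\rm Im\,}\varphi_-\asymp\abs{x'-y'}^2$. Hence for $j\leq 2n$ one has $\partial_{x_j}\varphi_-|_{x=y}=\partial_{y_j}\varphi_-|_{x=y}=0$, so there is no smooth first-order $L_j$ with $L_j\varphi_-=x_j-y_j$; the ``inverse transversal Hessian'' only gives $(x_j-y_j)=\sum_k c_{jk}\,\partial_{y_k}\varphi_-+O(\abs{x'-y'}^2)$, and after integrating by parts in $y$ the term $\partial_{y_k}u$ recombines into a $\Psi_k$-contribution whose amplitude again vanishes only to first order. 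In the model $\int_0^\infty e^{-t\abs{x'-y'}^2}(x_j-y_j)t^k\,dt$ one sees directly that a single factor of $x_j-y_j$ buys only a half-order gain, not a full one. What is actually needed is that $c^0_-(x,y)$ vanishes to \emph{infinite} order on the diagonal, so that it can be absorbed into $S^{-\infty}$; mere vanishing at $x=y$ does not suffice.

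The paper gets this infinite-order vanishing by a completely different mechanism that your sketch omits. First it symmetrizes, setting $B_1=B+B^*$, so that $B_1^*=B_1$ gives a functional equation \eqref{e-gue210627ycdc} relating $b_0(x,y)$ to $b_0(y,x)$. Then it exploits the sandwich $S^{(q)}B_1\equiv B_1\equiv B_1S^{(q)}$ not only through the leading stationary-phase term (which gives \eqref{e-gue210627yydI}--\eqref{e-gue210627yydIII}, i.e.\ your $c^0_-(x,x)=0$), but through the differential consequences $\ddbar_b B_1\equiv0$ and $\ddbar^*_b B_1\equiv0$. These force the $(I_0,I_0)$-component of $b_0$ to be annihilated by $\partial_{\bar z_j}$ for $j>q$ and by $\partial_{z_j}$ for $j\leq q$ modulo $O(\abs{x-y}^N)$, and the self-adjointness transfers these to the $y$-variables (see \eqref{e-gue210627ycda}--\eqref{e-gue210627ycdq}). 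An induction on the vanishing order, using these CR-type constraints together with the diagonal vanishing \eqref{e-gue210627ycdr}, then upgrades $b_0=O(\abs{(x,y)}^{N_0})$ to $O(\abs{(x,y)}^{N_0+1})$ at each step. This inductive bootstrap is the heart of the argument and has no counterpart in your Step 3.
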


\begin{proof}
Let $B_1:=B+B^*$, where $B^*$ is the adjoint of $B$ with respect to $(\,\cdot\,|\,\cdot\,)$. We first assume that $q\neq n_+$. 
Let $x=(x_1,\ldots,x_{2n+1})$ be local coordinates of $X$ defined on an open set $D\subset X$. Fix $p\in D$. Take $x=(x_1,\ldots,x_{2n+1})$ so that $x(p)=0$ and \eqref{e-gue161219a} hold. We write 
\[\begin{split}
&\mbox{$B_1(x,y)\equiv\int^\infty_0e^{i\varphi_{-}(x, y)t}b(x, y, t)dt$ on $D$},\\
&b(x, y, t)\sim\sum^\infty_{j=0}b_j(x, y)t^{k-j}\text{ in }S^{k}_{1, 0}
(D\times D\times\mathbb{R}_+\,,T^{*0,q}X\boxtimes(T^{*0,q}X)^*),\\
&b_j(x, y)\in\mathcal{C}^\infty(D\times D,T^{*0,q}X\boxtimes(T^{*0,q}X)^*),\ j\in\mathbb N_0.
\end{split}\]
From Malgrange preparation theorem, we assume that 
\begin{equation}\label{e-gue210701yyd}
\varphi_-(x,y)=-x_{2n+1}+y_{2n+1}+\hat\varphi_-(x,y'),\ \ \hat\varphi_-(x,y')\in\mathcal{C}^\infty(D\times D),
\end{equation}
where $y'=(y_1,\ldots,y_{2n})$. 
By using integration by parts with respect to $t$, we assume that 
$s^j_-(x,y)$, $b_j(x,y)$, $s_-(x,y,t)$, $b(x,y,t)$ are independent of $y_{2n+1}$, for every $j=0,1,\ldots$, where $s_-(x,y,t)$, $s^j_-(x,y)$, $j=0,1,\ldots$, are as in \eqref{e-gue161110r}. Since $B_1$ and $S^{(q)}$ are smoothing away the diagonal, we assume that  the projections 
\[\begin{split}
&(x,y')\in{\rm supp\,}b(x,y,t)\To x,\ \ (x,y')\in{\rm supp\,}b_j(x,y)\To x,\\
&(x,y')\in{\rm supp\,}s_-(x,y,t)\To x,\ \ (x,y')\in{\rm supp\,}s^j_-(x,y)\To x,
\end{split} \]
are proper on $D$, for every $j=0,1,\ldots$. Fix $D_0\Subset D$, $D_0$ is an open set of 
$0$. Let $\chi\in\mathcal{C}^{\infty}_c(\mathbb R,[0,1])$, $\chi\equiv1$ on $[-\frac{1}{2},\frac{1}{2}]$. Let $\varepsilon>0$ be a small constant so that $\frac{x_{2n+1}-y_{2n+1}}{\varepsilon}\notin{\rm supp\,}\chi$, for every $(x',x_{2n+1})\in D_0$ and every $(y',y_{2n+1})\notin D$.
Let $g\in\Omega^{0,q}_c(D_0)$. From $B_1=B_1S^{(q)}$, we have on $D_0$, 
\[\begin{split}
&(B_1g)(x)\\
&=\int_X\int^\infty_0\int_X\int^\infty_0e^{i\varphi_{-}(x, u)\gamma+i\varphi_-(u,y)t}\chi(\frac{x_{2n+1}-u_{2n+1}}{\varepsilon})b(x,u',\gamma)s_-(u,y',t)g(y)dv(y)dtdv(u)d\gamma \\
& \quad +(Fg)(x)
\\
&= \int_X\int^\infty_0\int_X\int^\infty_0e^{it(\varphi_{-}(x, u)\sigma+\varphi_-(u,y))}\chi(\frac{x_{2n+1}-u_{2n+1}}{\varepsilon})tb(x,u',t\sigma)s_-(u,y',t)g(y)dv(y)d\sigma dv(u)dt \\ 
& \quad +(Fg)(x)\,,
\end{split}\]
where $F\equiv0$ on $D_0$. Hence, 
\begin{equation}\label{e-gue210626yyd}
B_1(x,y)\equiv \int^\infty_0\int_X e^{it(\varphi_{-}(x, u)\sigma+\varphi_-(u,y))}\chi(\frac{1-\sigma}{\varepsilon})\chi(\frac{x_{2n+1}-u_{2n+1}}{\varepsilon})tb(x,u',t\sigma)s_-(u,y',t)d\sigma dv(u).
 \end{equation}
 For every $f\in\mathcal{C}^\infty(D\times D)$ ($h\in\mathcal{C}^\infty(D)$), we write $\Td f\in\mathcal{C}^\infty(D^{\mathbb C}\times D^{\mathbb C})$ ($h\in\mathcal{C}^\infty(D^{\mathbb C})$) to denote an almost analytic extension of $f$ ($h$), where $D^{\mathbb C}$ is an open set of $\mathbb C^{2n+1}$ with 
 $D^{\mathbb C}\bigcap\mathbb R^{2n+1}=D$. We take $\Td\varphi_-$ so that 
 \begin{equation}\label{e-gue210626yydI}
 \Td\varphi_-(\Td x,\Td y)=-\Td x_{2n+1}+\Td y_{2n+1}+\Td{\hat\varphi_-}(\Td x,\Td y').
  \end{equation}
  Let $\beta(\Td x,\Td y)=(\beta_1(\Td x,\Td y),\ldots,\beta_{2n+1}(\Td x,\Td y))\in\mathcal{C}^\infty(D^{\mathbb C}\times D^{\mathbb C},\mathbb C^{2n+1})$, 
  $\gamma(\Td x,\Td y)\in\mathcal{C}^\infty(D^{\mathbb C}\times D^{\mathbb C},\mathbb C)$ be the solution of the system 
  \begin{equation}\label{e-gue210701yydII}
\begin{cases}
\frac{\pr\Td\varphi_-}{\pr\Td y_j}(\Td x,\beta(\Td x,\Td y))\gamma(\Td x,\Td y)+\frac{\pr\Td\varphi_-}{\pr\Td x_j}(\beta(\Td x,\Td y),\Td y)=0,\ \ j=1,\ldots,2n+1,\\
\Td\varphi_-(\Td x,\beta(\Td x,\Td y))=0. 
\end{cases}\end{equation}
From \eqref{e-gue210626yydI}, we see that $\beta(\Td x,\Td y)$ and $\gamma(\Td x,\Td y)$ are independent of $\Td y_{2n+1}$. From complex stationary phase formula of Melin-Sj\"ostrand~\cite{ms}, we get $B_1(x,y)\equiv \int^{+\infty}_0 e^{it\varphi_1(x,y)}f(x,y,t)dt$ on $D_0$, where 
\begin{equation}\label{e-gue210626yydII}\
\varphi_1(x,y)=\Td\varphi_-(\beta(x,y'),y)=-\beta_{2n+1}(x,y')+y_{2n+1}+\Td{\hat\varphi_-}(\beta(x,y'),y'),
\end{equation}
$f(x, y, t)\sim\sum^\infty_{j=0}f_j(x, y)t^{k-j}\text{ in }S^{k}_{1, 0}
(D_0\times D_0\times\mathbb{R}_+\,,T^{*0,q}X\boxtimes(T^{*0,q}X)^*)$, 
$f_j(x, y)\in\mathcal{C}^\infty(D_0\times D_0,T^{*0,q}X\boxtimes(T^{*0,q}X)^*)$, $j\in\mathbb N_0$, 
\begin{equation}\label{e-gue210626yydIII}
f_0(x,y)=f_0(x,y')=c(x,y')\Td b_0(x,\beta(x,y'))\Td s^0_-(\beta(x,y'),y'),
\end{equation}
where $c(x,y')\in\mathcal{C}^\infty(D_0\times D_0,\mathbb C)$, $c(x,y')\neq0$, for every $(x,y)\in D_0\times D_0$. From~\cite[Theorem 5.4]{HM14}, there is a $g(x,y)\in\mathcal{C}^\infty(D_0\times D_0)$, $g(x,y)\neq0$ at every $(x,y)\in D_0\times D_0$, such that $\varphi_-(x,y)-\varphi_1(x,y)-g(x,y)\varphi_-(x,y)=O(\abs{x-y}^N)$, for every $N\in\mathbb N$. From $\frac{\pr\varphi_-}{\pr y_{2n+1}}(x,y)=\frac{\pr\varphi_1}{\pr y_{2n+1}}(x,y)=1$, we deduce that $\varphi_-(x,y)-\varphi_1(x,y)=O(\abs{x-y}^N)$, for every $N\in\mathbb N$.  Hence, we can replace $\varphi_1$ by $\varphi_-$ and we have 
\begin{equation}\label{e-gue210626ycd}
\int^{+\infty}_0 e^{it\varphi_-(x,y)}b(x,y,t)dt\equiv \int^{+\infty}_0 e^{it\varphi_-(x,y)}f(x,y,t)dt\ \ \mbox{on $D_0$}.
\end{equation}
From \eqref{e-gue210626ycd}, we deduce that $b_0(x,y)=f_0(x,y)+h(x,y)\varphi_-(x,y)+O(\abs{x-y}^N)$, for every $N\in\mathbb N$, for some $h(x,y)\in\mathcal{C}^\infty(D_0\times D_0)$. Since $b_0(x,y)$ and $f_0(x,y)$ are independent of $y_{2n+1}$, we deduce that $b_0(x,y)=f_0(x,y)+O(\abs{x-y}^N)$, for every $N\in\mathbb N$. From 
this observation and \eqref{e-gue210626yydIII}, we get 
\begin{equation}\label{e-gue210627yyd}
\mbox{$b_0(x,y)=c(x,y)\Td b_0(x,\beta(x,y))\Td s^0_-(\beta(x,y),y)+O(\abs{x-y}^N)$, for every $N\in\mathbb N$}.
\end{equation}
From Theorem~\ref{t-gue140205III} and \eqref{e-gue210627yyd}, we deduce 
\begin{equation}\label{e-gue210627yydI}
b_0(x,y)(I-\tau_{y,n_-})=O(\abs{x-y}). 
\end{equation}
Form $B_1=S^{(q)}B_1$, we can repeat the procedure above and deduce that 
\begin{equation}\label{e-gue210627yydII}
(I-\tau_{x,n_-})b_0(x,y)=O(\abs{x-y}).
\end{equation}
From \eqref{e-gue210627yydI}, \eqref{e-gue210627yydII} and by the assumption that $\tau_{x,n_-}b_0(x,y)\tau_{y,n_-}=O(\abs{x-y})$, we conclude that 
\begin{equation}\label{e-gue210627yydIII}
b_0(x,y)=O(\abs{x-y}).
\end{equation} 

We assume that $b_0(x,y)=O(\abs{(x,y)}^{N_0})$, for some $N_0\in\mathbb N$. We are going to prove that $b_0(x,y)=O(\abs{(x,y)}^{N_0+1})$. Again, from 
\eqref{e-gue210627yyd}, we have 
\begin{equation}\label{e-gue210627yyda}
b_0(x,y)(I-\tau_{y,n_-})=O(\abs{(x,y)}^{N_0+1}) 
\end{equation}
and similarly, 
\begin{equation}\label{e-gue210627yydb}
(I-\tau_{x,n_-})b_0(x,y)=O(\abs{(x,y)}^{N_0+1}). 
\end{equation}
We only need to prove 
\begin{equation}\label{e-gue210627ycd}
\tau_{x,n_-}b_0(x,y)\tau_{y,n_-}=O(\abs{(x,y)}^{N_0+1}). 
\end{equation} 

Suppose that $\mu_j<0$, $j=1,\ldots,n_-$, $\mu_j>0$, $j=n_-+1,\ldots,n$. 
We will use the same notations as in the discussion after Theorem~\ref{t-gue161110g}.
Let $I_0=(1,\ldots,q)$. Write $b_0(x,y)=\sum'_{\abs{I}=\abs{J}=q}b_{0,I,J}(x,y)e_I(x)\otimes(e_J(y))^*$, $b_{0,I,J}(x,y)\in\mathcal{C}^\infty(D\times D)$, for every $\abs{I}=\abs{J}=q$, $I$, $J$ are strictly increasing. To prove \eqref{e-gue210627ycd}, we only need to prove 
\begin{equation}\label{e-gue210627ycdI}
b_{0,I_0,I_0}(x,y)=O(\abs{(x,y)}^{N_0+1}). 
\end{equation}
From $\ddbar_bB_1\equiv0$, we have $\ddbar_b\varphi_-=h_1\varphi_-+O(\abs{x-y}^N)$, for every $N\in\mathbb N$, for some smooth function $h_1(x,y)$. Since $\ddbar_b\varphi_-$ is independent of $y_{2n+1}$, we deduce that 
\begin{equation}\label{e-gue210627ycdII}
\mbox{$\ddbar_b\varphi_-=O(\abs{x-y}^N)$, for every $N\in\mathbb N$}.
\end{equation}
From \eqref{e-gue210627ycdII} and $\ddbar_bB_1\equiv0$, we get $(\ddbar_bb_0)(x,y)=h_2\varphi_-+O(\abs{x-y}^N)$, for every $N\in\mathbb N$, for some smooth function $h_2$. Since $\ddbar_bb_0$ is independent of $y_{2n+1}$, we get 
\begin{equation}\label{e-gue210627ycdIII}
\mbox{$(\ddbar_bb_0)(x,y)=O(\abs{x-y}^N)$, for every $N\in\mathbb N$}.
\end{equation}
From \eqref{e-gue210627yyda}, \eqref{e-gue210627yydb} and \eqref{e-gue210627ycdIII}, we deduce that 
\begin{equation}\label{e-gue210627ycda}
\frac{\pr}{\pr\ol z_j}b_{0,I_0,I_0}(x,y)=O(\abs{(x,y)}^{N_0}),\ \ j=q+1,\ldots,n.
\end{equation}
Similar, from $\ddbar^*_bB_1\equiv0$, we can repeat the procedure above with minor change and deduce that 
\begin{equation}\label{e-gue210627ycdb}
\frac{\pr}{\pr z_j}b_{0,I_0,I_0}(x,y)=O(\abs{(x,y)}^{N_0}),\ \ j=1,\ldots,q. 
\end{equation}

From $B^*_1=B_1$, it is straightforward to there are $h_3, h_4\in\mathcal{C}^\infty(D\times D)$ such that 
\begin{equation}\label{e-gue210627ycdc}
\mbox{$b^*_0(y,x)=h_3(x,y)b_0(x,y)+h_4(x,y)\varphi_-(x,y)+O(\abs{x-y}^N)$, for every $N\in\mathbb N$},
\end{equation}
where $b^*_0$ is the adjoint of $b_0$. From \eqref{e-gue210627ycdc} and notice that $b^*_0(y,x)=b^*_0(y,x')$, we get 
\[\begin{split}
&b^*_0(y,x)=\Td h_3((x',y_{2n+1}-\ol{\hat\varphi_-}(y,x')),y)\Td b_0((x',y_{2n+1}-\ol{\hat\varphi_-}(y,x')),y')\\
&\quad\mbox{$+O(\abs{x-y}^N)$, for every $N\in\mathbb N$},
\end{split}\]
where $x'=(x_1,\ldots,x_{2n})$. Hence 
\begin{equation}\label{e-gue210627yydh}
\begin{split}
&\ol{b_{0,I_0,I_0}}(y,x)=\Td h_3((x',y_{2n+1}-\ol{\hat\varphi_-}(y,x')),y)\Td b_{0,I_0,I_0}((x',y_{2n+1}-\ol{\hat\varphi_-}(y,x')),y')\\
&\quad\mbox{$+O(\abs{x-y}^N)$, for every $N\in\mathbb N$},
\end{split}
\end{equation}
From \eqref{e-gue210627ycda}, \eqref{e-gue210627ycdb} and \eqref{e-gue210627yydh}, we conclude that 
\begin{equation}\label{e-gue210627ycdp}
\frac{\pr}{\pr w_j}b_{0,I_0,I_0}(x,y)=O(\abs{(x,y)}^{N_0}),\ \ j=q+1,\ldots,n,
\end{equation}
\begin{equation}\label{e-gue210627ycdq}
\frac{\pr}{\pr\ol w_j}b_{0,I_0,I_0}(x,y)=O(\abs{(x,y)}^{N_0}),\ \ j=1,\ldots,q.
\end{equation}

From $\tau_{x,n_-}b_0(x,x)\tau_{x,n_-}=0$ and induction assumption, we can check that 
\begin{equation}\label{e-gue210627ycdr}
b_{0,I_0,I_0}(x,x)=O(\abs{x}^{N_0+1}). 
\end{equation}
 Fix $j\in\{q+1,\ldots,n\}$ and fix $\alpha, \beta\in\mathbb N_0$, $\alpha+\beta=N_0$. 
From\eqref{e-gue210627ycdr}, we have 
\begin{equation}\label{e-gue210630yyd}
\Bigr(\bigr((\frac{\pr}{\pr z_j}+\frac{\pr}{\pr  w_j})^\alpha(\frac{\pr}{\pr \ol z_j}+\frac{\pr}{\pr\ol w_j})^\beta\bigr)b_{0,I_0,I_0}\Bigr)(0,0)=0. 
\end{equation}
From \eqref{e-gue210630yyd}, we have 
\begin{equation}\label{e-gue201226yydh}
\begin{split}
&\Bigr((\frac{\pr}{\pr z_j})^{\alpha}(\frac{\pr}{\pr \ol w_j})^{\beta}b_{0,I_0,I_0}\Bigr)(0,0)=\sum_{\alpha_1, \alpha_2, \beta_1,\beta_2\in\mathbb N_0, \alpha_1+\alpha_2=\alpha, \beta_1+\beta_2=\beta, \alpha_2+\beta_1>0} c_{\alpha_1,\alpha_2,\beta_1,\beta_2}\\
&\times\Bigr((\frac{\pr}{\pr z_j})^{\alpha_1}(\frac{\pr}{\pr w_j})^{\alpha_2}(\frac{\pr}{\pr\ol z_j})^{\beta_1}(\frac{\pr}{\pr \ol w_j})^{\beta_2}b_{0,I_0,I_0}\Bigr)(0,0),
\end{split}
\end{equation}
where $c_{\alpha_1,\alpha_2,\beta_1,\beta_2}$ is a constant, for every $\alpha_1, \alpha_2, \beta_1,\beta_2\in\mathbb N_0$, $\alpha_1+\alpha_2=\alpha$, $\beta_1+\beta_2=\beta$, $\alpha_2+\beta_1>0$. Since $\alpha_2+\beta_1>0$, from \eqref{e-gue210627ycda} and \eqref{e-gue210627ycdp}, we get 
\[\begin{split}
&\Bigr((\frac{\pr}{\pr z_j})^{\alpha_1}(\frac{\pr}{\pr w_j})^{\alpha_2}(\frac{\pr}{\pr\ol z_j})^{\beta_1}(\frac{\pr}{\pr \ol w_j})^{\beta_2}b_{0,I_0,I_0}\Bigr)(0,0)=0,\\
&\mbox{for every 
$\alpha_1, \alpha_2, \beta_1,\beta_2\in\mathbb N_0$, $\alpha_1+\alpha_2=\alpha$, $\beta_1+\beta_2=\beta$, $\alpha_2+\beta_1>0$}.\end{split}\]
From this observation and \eqref{e-gue201226yydh}, we get 
\begin{equation}\label{e-gue201226yyde}
\mbox{$\Bigr((\frac{\pr}{\pr z_j})^{\alpha}(\frac{\pr}{\pr \ol w_j})^{\beta}b_{0,I_0,I_0}\Bigr)(0,0)=0$, for every $\alpha, \beta\in\mathbb N_0$, $\alpha+\beta=N_0$}. 
\end{equation}
Similarly, fix $j\in\{1,\ldots,q\}$, we can repeat the procedure above and deduce that 
\begin{equation}\label{e-gue210630yyda}
\mbox{$\Bigr((\frac{\pr}{\pr\ol z_j})^{\alpha}(\frac{\pr}{\pr w_j})^{\beta}b_{0,I_0,I_0}\Bigr)(0,0)=0$, for every $\alpha, \beta\in\mathbb N_0$, $\alpha+\beta=N_0$}. 
\end{equation}

Since $b_{0,I_0,I_0}$ is independent of $y_{2n+1}$ , from \eqref{e-gue210627ycdr}, we have 
\begin{equation}\label{e-gue201226ycdI}
\Bigr((\frac{\pr}{\pr x_{2n+1}})^Nb_{0,I_0,I_0}\Bigr)(0,0)=0,\ \ \mbox{for every $N\in\mathbb N$ with $\abs{N}\leq N_0$}. 
\end{equation}
From \eqref{e-gue201226ycdI}, we can repeat the proof of \eqref{e-gue201226yyde} with minor change and deduce that for $j\in\set{q+1,\ldots,n}$, $\ell\in\set{1,\ldots,q}$, we have 
\begin{equation}\label{e-gue201226ycdII}
\begin{split}
&\Bigr((\frac{\pr}{\pr z_j})^{\alpha_0}(\frac{\pr}{\pr \ol w_j})^{\beta_0}(\frac{\pr}{\pr\ol z_\ell})^{\alpha}\frac{\pr}{\pr w_\ell})^{\beta}(\frac{\pr}{\pr x_{2n+1}})^{\gamma}b_{0,I_0,I_0}\Bigr)(0,0)=0,\\
&\mbox{ for every $\alpha_0, \beta_0,\alpha, \beta\in\mathbb N^n_0$, $\gamma\in\mathbb N_0$, $|\alpha_0|+\abs{\beta_0}+\abs{\alpha}+|\beta|+|\gamma|=N_0$}.
\end{split}
\end{equation}

From \eqref{e-gue210627ycda}, \eqref{e-gue210627ycdb}, \eqref{e-gue210627ycdp}, \eqref{e-gue210627ycdq}, \eqref{e-gue201226yyde}, \eqref{e-gue210630yyda} and 
\eqref{e-gue201226ycdII}, we get \eqref{e-gue210627ycd}. By induction, $b_0(x,y)$ vanishes to infinite order at $(p,p)$. 

We have proved that $b_0$ vanishes to infinite order at $x=y$. Thus, we can take $b_0=0$ and $B_1\in\Psi_{k-1}(X)$. We can repeat the procedure above and get that $B_2:=iB-iB^*$ is in $\Psi_{k-1}(X)$. Hence, $B\in\Psi_{k-1}$. If $q=n_-=n_+$, the proof is similar. The lemma follows. 
\end{proof} 

\begin{defn}\label{d-gue210717yyd}
Let $P\in L^\ell_{{\rm cl\,}}(X,T^{*0,q}X\boxtimes(T^{*0,q}X)^*)$ with scalar principal symbol, $\ell\leq0$, $\ell\in\mathbb Z$.
The Toeplitz operator is given by 
\begin{equation}\label{e-gue210707ycd}
\begin{split}
&T_P:=S^{(q)}\circ P\circ S^{(q)}: L^2_{(0,q)}(X)\To{\rm Ker\,}\Box^q_b,\\
&T_{P,-}:=S_-\circ P\circ S_-: L^2_{(0,q)}(X)\To{\rm Ker\,}\Box^q_b,\\
&T_{P,+}:=S_+\circ P\circ S_+: L^2_{(0,q)}(X)\To{\rm Ker\,}\Box^q_b,
\end{split}
\end{equation}
where $S_-, S_+$ are as in \eqref{e-gue210707yyd}. 
If $q\neq n_+$, then $T_{P,+}\equiv0$ on $X$. 

Let $f\in\mathcal{C}^\infty(X)$ and let $M_f$ denote the operator given by the multiplication $f$. When $P=M_f$, we write $T_f:=T_P$. 
\end{defn}

The goal of this section is to study the algebra of $T_P$ . The following follows from the standard calculus of Fourier integral operator of complex type (see the calculation below) 

\begin{thm}\label{t-gue210701yyd}
Let $P\in L^{\ell}_{{\rm cl\,}}(X,T^{*0,q}X\boxtimes(T^{*0,q}X)^*)$ with scalar principal symbol, $\ell\leq0$, $\ell\in\mathbb Z$. We have $T_P, T_{P,\mp}\in\Psi_{n+\ell}(X)$ and 
\[\mbox{$\sigma^0_{T_{P,-}}(x,x)=\frac{1}{2}\pi^{-n-1}\abs{{\rm det\,}\mathcal L_x}\sigma^0_P(x,-\omega_0(x))\tau_{x,n_-}$, for every $x\in X$}.\]
If $q=n_-=n_+$, then
\[\mbox{$\sigma^0_{T_{P,+}}(x,x)=\frac{1}{2}\pi^{-n-1}\abs{{\rm det\,}\mathcal L_x}\sigma^0_P(x,\omega_0(x))\tau_{x,n_+}$, for every $x\in X$}.\]
\end{thm}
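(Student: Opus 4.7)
The plan is to realize $T_{P,\mp}$ locally as a complex Fourier integral operator of Szeg\H{o} type by composing oscillatory integrals twice, and then to read off the leading term on the diagonal from a Melin--Sj\"ostrand complex stationary phase, with the overall normalization pinned down by the idempotency of $S_\mp$ modulo smoothing operators.

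I would work in a local coordinate patch $D$, where Theorem~\ref{t-gue161109I} gives $S_-(x,y)\equiv\int_0^\infty e^{i\varphi_-(x,y)t}s_-(x,y,t)\,dt$ with $s_-\in S^n_{{\rm cl}}$ and leading term $s^0_-$. The first step is the composition $P\circ S_-$: since $P$ has scalar principal symbol of order $\ell$ and $d_x\varphi_-(x,y)\big|_{x=y}=-\omega_0(x)\neq0$, the standard formula for the action of a classical pseudodifferential operator on a complex phase oscillatory integral gives
$$ (P\circ S_-)(x,y)\equiv\int_0^\infty e^{i\varphi_-(x,y)t}r(x,y,t)\,dt,\qquad r\in S^{n+\ell}_{{\rm cl}}, $$
with leading term $r_0(x,y)=\sigma^0_P(x,d_x\varphi_-(x,y))\,s^0_-(x,y)$, using the homogeneity $\sigma^0_P(x,t\xi)=t^\ell\sigma^0_P(x,\xi)$. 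The second step is the composition $S_-\circ(P\circ S_-)$: this is the same double-integral-to-single-integral reduction that is carried out inside the proof of Lemma~\ref{l-gue210624yyd}. After the rescaling $\gamma=t\sigma$ and the insertion of a cutoff $\chi$, the Melin--Sj\"ostrand complex stationary phase applied to the phase $\varphi_-(x,u)\sigma+\varphi_-(u,y)$ in the $(2n+2)$ variables $(\sigma,u)$ collapses the integral to an oscillatory integral with phase $\varphi_-(x,y)$. The amplitude order evolves as $1+n+(n+\ell)-(n+1)=n+\ell$, which gives $T_{P,-}\in\Psi_{n+\ell}(X)$, with critical point $\beta(x,y')$ satisfying $\beta(x,x')=x$ on the diagonal.

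For the diagonal principal symbol, the stationary phase formula provides
$$ \sigma^0_{T_{P,-}}(x,x)=c(x,x)\,s^0_-(x,x)\,\sigma^0_P(x,-\omega_0(x))\,s^0_-(x,x) $$
for a non-vanishing smooth factor $c$ coming from the Hessian determinant. Rather than compute $c$ directly, I would specialize to $P=I$: the microlocal disjointness ${\rm WF}'(S_-)\cap{\rm WF}'(S_+)=\emptyset$ implies $S_-S_+\equiv S_+S_-\equiv 0$, so $(S_-+S_+)^2=S_-+S_+$ forces $S_-^2\equiv S_-$, and Theorem~\ref{t-gue140205III} together with $\tau_{x,n_-}^2=\tau_{x,n_-}$ then fixes $c(x,x)\cdot\tfrac12\pi^{-n-1}\abs{\det\mathcal{L}_x}=1$. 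Since $\sigma^0_P$ is scalar it commutes with $\tau_{x,n_-}$, and substituting this normalization back produces the announced formula. The case of $T_{P,+}$ when $q=n_-=n_+$ is identical after replacing $\varphi_-,\Sigma^-,\tau_{x,n_-}$ by $\varphi_+,\Sigma^+,\tau_{x,n_+}$.

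The main obstacle I anticipate is the clean bookkeeping in the Melin--Sj\"ostrand stationary phase applied to bundle-valued symbols at the critical point $\beta$; however, this is exactly the calculation already performed in the proof of Lemma~\ref{l-gue210624yyd} (see~\eqref{e-gue210626yyd}--\eqref{e-gue210626yydIII} there), so the intention is to invoke it verbatim rather than reprove it, which makes the remainder of the argument essentially formal.
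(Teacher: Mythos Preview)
Your proposal is correct and follows essentially the same route as the paper: the paper simply declares that the theorem ``follows from the standard calculus of Fourier integral operators of complex type (see the calculation below)'', and that calculation is exactly the Melin--Sj\"ostrand composition you describe, carried out in the coordinates of Theorem~\ref{t-gue161110g} (see \eqref{e-gue210701ycdI}--\eqref{e-gue210703ycd}). The one small difference is that you pin down the Hessian factor $c(x,x)$ by specializing to $P=I$ and invoking $S_-^2\equiv S_-$, whereas the paper computes it directly in coordinates as $\bigl(\det(F''_{\tilde\sigma,\tilde u}/2\pi\imath)\bigr)^{-1/2}=2^{-n+1}\pi^{n+1}\abs{\mu_1}^{-1}\cdots\abs{\mu_n}^{-1}$ (equation~\eqref{e-gue210702ycd}); your normalization trick is cleaner and the paper in fact uses the same $P=I$ specialization later, in \eqref{e-gue210705yyd}, to control the sub-leading term.
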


Let $x=(x_1,\ldots,x_{2n+1})$ be local coordinates of $X$ defined on an open set $D\subset X$. Fix $p\in D$. Take $x=(x_1,\ldots,x_{2n+1})$ so that $x(p)=0$ and \eqref{e-gue161219a} hold. We will use the same notations as in the proof of Lemma~\ref{l-gue210624yyd}. Let $P\in L^{\ell}_{{\rm cl\,}}(X,T^{*0,q}X\boxtimes(T^{*0,q}X)^*)$ with scalar principal symbol, $\ell\leq0$, $\ell\in\mathbb Z$. 
We write 
 \begin{equation}\label{e-gue210704yyd}
 \begin{split}
&\mbox{$T_{P,-}(x,y)\equiv\int^\infty_0e^{i\varphi_{-}(x, y)t}a_P(x, y, t)dt$ on $D$},\\
&a_P(x, y, t)\sim\sum^\infty_{j=0}a^j_P(x, y)t^{n-j+\ell}\text{ in }S^{n+\ell}_{1, 0}
(D\times D\times\mathbb{R}_+\,,T^{*0,q}X\boxtimes(T^{*0,q}X)^*),\\
&a^j_P(x, y)\in\mathcal{C}^\infty(D\times D,T^{*0,q}X\boxtimes(T^{*0,q}X)^*),\ j\in\mathbb N_0.
\end{split}\end{equation}
We assume that $\varphi_-$ is of the form \eqref{e-gue210701yyd} and 
$s^j_-(x,y)$, $a^j_P(x,y)$, $s_-(x,y,t)$, $a_P(x,y,t)$ are independent of $y_{2n+1}$, for every $j=0,1,\ldots$. Furthermore, we suppose that the projections 
\[\begin{split}
&(x,y)\in{\rm supp\,}a_P(x,y,t)\To x,\ \ (x,y)\in{\rm supp\,}a^j_P(x,y)\To x,\\
&(x,y)\in{\rm supp\,}s_-(x,y,t)\To x,\ \ (x,y)\in{\rm supp\,}s^j_-(x,y)\To x,
\end{split} \]
are proper, for every $j=0,1,\ldots$. Note that 
\begin{equation}\label{e-gue210719yyd}
(d_x\varphi_-)(x,x)=-\omega_0(x),\ \ \mbox{for every $x\in D$}. 
\end{equation}

 Let $\beta(\Td x,\Td y)=(\beta_1(\Td x,\Td y),\ldots,\beta_{2n+1}(\Td x,\Td y))\in\mathcal{C}^\infty(D^{\mathbb C}\times D^{\mathbb C},\mathbb C^{2n+1})$, 
$\gamma(\Td x,\Td y)\in\mathcal{C}^\infty(D^{\mathbb C}\times D^{\mathbb C},\mathbb C)$ be as in \eqref{e-gue210701yydII}. From \eqref{e-gue140205VII} and \eqref{e-gue210701yydII}, it is straightforward to check that 
\begin{equation}\label{e-gue210701ycd}
\begin{split}
&\frac{\pr\beta_{2j-1}}{\pr x_{2j}}(0,0)=\frac{\pr\beta_{2j}}{\pr y_{2j-1}}(0,0)=\frac{-\mu_j}{2i\abs{\mu_j}},\ \ j=1,\ldots,n,\\
&\frac{\pr\beta_{2j}}{\pr x_{2j-1}}(0,0)=\frac{\pr\beta_{2j-1}}{\pr y_{2j}}(0,0)=\frac{\mu_j}{2i\abs{\mu_j}},\ \ j=1,\ldots,n,\\
&\frac{\pr\beta_{2j-1}}{\pr x_{2j-1}}(0,0)=\frac{\pr\beta_{2j}}{\pr x_{2j}}(0,0)=\frac{\pr\beta_{2j-1}}{\pr y_{2j-1}}(0,0)=\frac{\pr\beta_{2j}}{\pr y_{2j}}(0,0)=\frac{1}{2},\ \ j=1,\ldots,n,\\
&\frac{\pr\beta_{2j}}{\pr x_{s}}(0,0)=\frac{\pr\beta_{2j}}{\pr y_{s}}(0,0)=0,\ \ s\notin\set{2j-1,2j},\ \ j=1,\ldots,n,\\
&\frac{\pr\beta_{2j-1}}{\pr x_{s}}(0,0)=\frac{\pr\beta_{2j-1}}{\pr y_{s}}(0,0)=0,\ \ s\notin\set{2j-1,2j},\ \ j=1,\ldots,n.
\end{split}
\end{equation}
Form complex stationary phase formula, we can check that 
\begin{equation}\label{e-gue140205VIIa}
a^0_P(x,y')=\Td{\sigma^0_P}(\beta(x,y'),\Td\varphi_{-,x}(\beta(x,y'),y))s^0_-(x,y')+O(\abs{x-y}^N), 
\end{equation}
for every $N\in\mathbb N$.
Let $F(\Td x,\Td y,\Td\sigma,\Td u):=\Td\varphi_-(\Td x,\Td u)\Td\sigma+\Td\varphi_-(\Td u,\Td y)$. 
Fix $D_0\Subset D$, $D_0$ is an open set of 
$0$. Let $\chi\in\mathcal{C}^{\infty}_c(\mathbb R,[0,1])$, $\chi\equiv1$ on $[-\frac{1}{2},\frac{1}{2}]$. Let $\varepsilon>0$ be a small constant so that 
$\frac{x_{2n+1}-y_{2n+1}}{\varepsilon}\notin{\rm supp\,}\chi$, for every $(x',x_{2n+1})\in D_0$ and every $(y',y_{2n+1})\notin D$. As in the proof of Lemma~\ref{l-gue210624yyd}, we have 
\begin{equation}\label{e-gue210701ycdI}
\begin{split}
&\int^\infty_0e^{i\varphi_{-}(x, y)t}a_P(x, y, t)dt\\
&\equiv\int^\infty_0\int_X \int^\infty_0 e^{it(\varphi_{-}(x, u)\sigma+i\varphi_-(u,y))}\chi(\frac{1-\sigma}{\varepsilon})\chi(\frac{x_{2n+1}-u_{2n+1}}{\varepsilon})ta_P(x,u',t\sigma)s_-(u,y',t)d\sigma dv(u)dt\\
&\equiv\int^\infty_0e^{i\varphi_{-}(x, y)t}g(x, y, t)dt,
\end{split}
\end{equation}
where 
$g(x, y, t)\sim\sum^\infty_{j=0}g_j(x, y)t^{n-j+\ell}\text{ in }S^{n+\ell}_{1, 0}
(D_0\times D_0\times\mathbb{R}_+\,,T^{*0,q}X\boxtimes(T^{*0,q}X)^*)$, 
$g_j(x, y)\in\mathcal{C}^\infty(D_0\times D_0,T^{*0,q}X\boxtimes(T^{*0,q}X)^*)$, $j\in\mathbb N_0$, 
$g_j(x, y)$ is independent of $y_{2n+1}$, for every $j\in\mathbb N_0$, 
\begin{equation}\label{e-gue210701ycdII}
g_0(x,y)=\Bigr({\rm det\,}(\frac{F''_{\Td\sigma,\Td u}}{2\pi\imath})\Bigr)^{-\frac{1}{2}}|_{\Td u=\beta(x,y),\Td\sigma=\gamma(x,y)}\Td a^0_P(x,\beta(x,y'))\Td s^-_0(\beta(x,y'),y')+O(\abs{x-y}^N),
\end{equation}
\begin{equation}\label{e-gue210702yyd}
\begin{split}
&\sum^N_{j=0}t^{n-j+\ell}g_j(x,y)\\
&-\Bigr({\rm det\,}(\frac{F''_{\Td\sigma,\Td u}}{2\pi i})\Bigr)^{-\frac{1}{2}}|_{\Td u=\beta(x,y),\Td\sigma=\gamma(x,y)}
\sum^N_{j=0}L_{j,x,y}\Bigr(\Td a_P(x, \Td u',\Td\sigma t)\Td s_-(\Td u,y',t)\Td v_1\Bigr)|_{\Td u=\beta(x,y),\Td\sigma=\gamma(x,y)}t^{-n-j+\ell}\\
&\in S^{n-N-1+\ell}_{1, 0}
(D_0\times D_0\times\mathbb{R}_+\,,T^{*0,q}X\boxtimes(T^{*0,q}X)^*),
\end{split}
\end{equation}
for every $N\in\mathbb N$, where $v_1=v_1(x,u,\sigma)=\chi(\frac{1-\sigma}{\varepsilon})\chi(\frac{x_{2n+1}-u_{2n+1}}{\varepsilon})v(u)$, $v(u)du=dv(u)$, $L_{j,x,y}$ is a differential operator in $u$ of order $\leq 2j$ which is a $\mathcal{C}^\infty$ function of $(x,y')$, for 
every $j=0,1,2,\ldots$, $F''_{\Td\sigma,\Td u}=\left(\frac{\pr^2F}{\pr\Td u_j\pr\Td u_k}\right)^{2n+1}_{j,k=0}$, $\Td u_0=\Td\sigma$. From \eqref{e-gue210701ycdI} and notice that $a^j_P(x,y)$, $g_j(x,y)$ are independent of $y_{2n+1}$, for every $j=0,1,\ldots$, we conclude that 
\begin{equation}\label{e-gue210702yydI}
\mbox{$a^j_P(x,y)=g_j(x,y)+O(\abs{x-y}^N)$, for every $N\in\mathbb N$, $j=0,1,\ldots$}. 
\end{equation}
Now, take $x=y=0$. It is straightforward to check that 
\begin{equation}\label{e-gue210702ycd}
\begin{split}
&\beta(0,0)=0,\ \ \gamma(0,0)=1,\\
&\Bigr({\rm det\,}(\frac{F''_{\Td\sigma,\Td u}(0,0,1,0)}{2\pi\imath})\Bigr)^{-\frac{1}{2}}
=2^{-n+1}\pi^{n+1}\abs{\mu_1}^{-1}\cdots\abs{\mu_n}^{-1},
\end{split}
\end{equation}
and 
\begin{equation}\label{e-gue210703ycd}
a^0_P(0,0)=\frac{1}{2}\pi^{-n-1}\abs{{\rm det\,}\mathcal{L}_p}\sigma^0_P(p,-\omega_0(p))\tau_{p,n_-}.
\end{equation}
From \eqref{e-gue140205VII}, \eqref{e-gue210702yyd}, \eqref{e-gue210702yydI}, H\"ormander's formula~\cite[Theorem 7.7.5]{hor} and notice 
that $v(0)=2^n$, we conclude that 
\begin{equation}\label{e-gue210702ycdI}
\begin{split}
&a^1_P(0,0)=2^{-n+1}\pi^{n+1}\abs{\mu_1}^{-1}\cdots\abs{\mu_n}^{-1}\Bigr(a^1_P(0,0)s^0_-(0,0)v(0)+s^1_-(0,0)a^0_P(0,0)v(0)\\
&+\sum_{\nu-\mu=1}\sum_{2\nu \geq 3\mu}\frac{\imath^{-1}2^{-\nu}}{\nu!\,\mu!}L^{\nu}(h(\sigma,u)^{\mu}\Td a^0_P(0,u)v_1(0,u,\sigma)\sigma^ns^0_-(u,0))|_{u=0,\sigma=1}\Bigr),\end{split}\end{equation}
where 
\begin{equation}\label{e-gue210703yyd}
h(\sigma,u):=F(0,0,\sigma,u)-F(0,0,1,0)-\frac{1}{2}\langle\,F''_{\Td\sigma,\Td u}(0,0,1,0)(\sigma,u)^t,(\sigma,u)^t\,\rangle,
\end{equation}
\begin{equation}\label{e-gue210703yydI}
	L= -2\,\partial_{u_{2n+1}}\partial_{\sigma}+\imath\, \sum_{j=1}^n \frac{1}{\abs{\mu_j}}\pr_{\ol z_j}\pr_{z_j}.
	\end{equation}
	Let 
	\begin{equation}\label{e-gue210721yyd}
	f(u):=\Td{\sigma^0_P}(\beta(0,u'),\Td\varphi_{-,x}(\beta(0,u'),y)).
	\end{equation}
Suppose that $\mu_j<0$, $j=1,\ldots,n_-$, $\mu_j>0$, $j=n_-+1,\ldots,n$. We will use the same notations as in the discussion after Theorem~\ref{t-gue161110g}. Compare the two side of \eqref{e-gue210702ycdI} in the direction of $e_{I_0}\otimes(e_{I_0})^*$ and using Theorem~\ref{t-gue140205III} and 
\eqref{e-gue210703ycd}, we get 
\begin{equation}\label{e-gue210703yyda}
\begin{split}
&a^1_{P,I_0,I_0}(0,0)\\
&=a^1_{P,I_0,I_0}(0,0)+s^1_{-,I_0,I_0}(0,0)\sigma^0_P(0,-\omega_0(0))\\
&\quad+C_0\sum_{\nu-\mu=1}\sum_{2\nu \geq 3\mu}\frac{\imath^{-1}2^{-\nu}}{\nu!\,\mu!}L^{\nu}(h(\sigma,u)^{\mu} s^0_{-,I_0,I_0}(0,u)f(u)v_1(0,u,\sigma)\sigma^ns^0_{-,I_0,I_0}(u,0))|_{u=0,\sigma=1}\\
&=a^1_{P,I_0,I_0}(0,0)+s^1_{-,I_0,I_0}(0,0)\sigma^0_P(0,-\omega_0(0))\\
&\quad+\sigma^0_P(0,-\omega_0(0))C_0\sum_{\nu-\mu=1}\sum_{2\nu \geq 3\mu}\frac{\imath^{-1}2^{-\nu}}{\nu!\,\mu!}L^{\nu}(h(\sigma,u)^{\mu} s^0_{-,I_0,I_0}(0,u)v_1(0,u,\sigma)\sigma^ns^0_{-,I_0,I_0}(u,0))|_{u=0,\sigma=1}\\
&\quad+\varepsilon_P,
\end{split}
\end{equation}
where $C_0=2^{-n+1}\pi^{n+1}\abs{\mu_1}^{-1}\cdots\abs{\mu_n}^{-1}$ and 
\begin{equation}\label{e-gue210703yydb}
\begin{split}
\varepsilon_P=&\sum^{N_0}_{j=1}\imath^{-1}2^{-1}Z_j(f(u))|_{u=0}\ell_j\Bigr( s^0_{-,I_0,I_0}(0,u)\sigma^nv_1(0,u,\sigma)s^0_{-,I_0,I_0}(u,0)\Bigr)|_{u=0,\sigma=1}\\
&\quad+\sum^{N_1}_{j=1}\imath^{-1}\frac{1}{8}\hat Z_j(f(u))|_{u=0}P_j
\Bigr(h(\sigma,u)s^0_{-,I_0,I_0}(0,u)\sigma^nv_1(0,u,\sigma)s^0_{-,I_0,I_0}(u,0)\Bigr)|_{u=0,\sigma=1}\\
&\quad+\imath^{-1}2^{-1}L(f(u))|_{u=0}s^2_{-,I_0,I_0}(0,0)2^n,
\end{split}
\end{equation}
$Z_j$, $\ell_j$ are differential operators of constant coefficients of order $1$, $\hat Z_k$, $P_k$ are differential operators of constant coefficients of order $1$ and order $3$ respectively, $j=1,\ldots,N_0$, $k=1,\ldots,N_1$, $N_0, N_1\in\mathbb N$. 

Take $P=I$ in \eqref{e-gue210703yyda}, we get 
\begin{equation}\label{e-gue210705yyd}
\begin{split}
&s^1_{-,I_0,I_0}(0,0)\\
&=-C_0\sum_{\nu-\mu=1}\sum_{2\nu \geq 3\mu}\frac{\imath^{-1}2^{-\nu}}{\nu!\,\mu!}L^{\nu}(h(\sigma,u)^{\mu} s^0_{-,I_0,I_0}(0,u)v_1(0,u,\sigma)\sigma^ns^0_{-,I_0,I_0}(u,0))|_{u=0,\sigma=1}.
\end{split}
\end{equation}
From \eqref{e-gue210703yyda} and \eqref{e-gue210705yyd}, we get 
\begin{equation}\label{e-gue210705yydI}
\varepsilon_P=0. 
\end{equation}

We introduce some notations. 
For $u\in\mathcal{C}^\infty(X)$, there is a unique vector field $X_u$ such that 
\begin{equation}\label{e-gue210706yyd}
\iota(X_u)\omega_0=u,\ \ \iota(X_u)d\omega_0=(\iota(-R)du)\omega_0-du.  
\end{equation}
For $u, v\in\mathcal{C}^\infty(X)$, the transversal Poisson bracket is defined by  
\begin{equation}\label{e-gue210706yydI}
\{u, v\}:=d\omega_0(X_{u},X_{v}). 
\end{equation}
In the local coordinates $x=(x_1,\ldots,x_{2n+1})$ satisfying $x(p)=0$ and \eqref{e-gue161219a}, it is straightforward to check that 
\begin{equation}\label{e-gue210706yydII}
X_{u}(p)=\sum^n_{j=1}\Bigr(\frac{1}{2\imath\mu_j}\frac{\pr u}{\pr\ol z_j}(p)\frac{\pr}{\pr z_j}+\frac{1}{-2\imath\mu_j}\frac{\pr u}{\pr z_j}(p)\frac{\pr}{\pr\ol z_j}\Bigr)+u\frac{\pr}{\pr x_{2n+1}}
\end{equation}
and 
\begin{equation}\label{e-gue210706yydIII}
\{u, v\}(p)=(-\imath)\sum^n_{j=1}\Bigr(\frac{1}{2\mu_j}\frac{\pr u}{\pr\ol z_j}(p)\frac{\pr v}{\pr z_j}(p)-\frac{1}{2\mu_j}\frac{\pr u}{\pr z_j}(p)\frac{\pr v}{\pr\ol z_j}(p)\Bigr).
\end{equation} 

Let 
\begin{equation}\label{e-gue210722yyd}
\triangle_b:=\Box^q_b+R^*R: \Omega^{0,q}(X)\To\Omega^{0,q}(X),
\end{equation}
where $R^*$ is the formal adjoint of $R$ with respect to $(\,\cdot\,|\,\cdot\,)$. For every $j\in\mathbb Z$, $j\leq0$, put 
\begin{equation}\label{e-gue210722yydI}
\hat S^j\:=\set{f(x)(\sigma^0_{\triangle_b})^{\frac{j}{2}}\in\mathcal{C}^\infty(TX);\, f(x)\in\mathcal{C}^\infty(X)}
\end{equation}
and let 
\begin{equation}\label{e-gue210722yydII}
\hat S:=\bigcup_{j\in\mathbb Z, j\leq0}\hat S^j.
\end{equation}

For $a\in S^{k}_{1,0}(X)$, $b\in S^{\ell}_{1,0}(X)$, $k, \ell\in\mathbb Z$, $k, \ell\leq0$, the transversal Poisson bracket  of $a$ and $b$ with respect to $\Sigma^-$
 is defined to be the element $\{a,b\}_-\in\hat S^{k+\ell-1}$ such that 
\begin{equation}\label{e-gue210721yyda}
\{a,b\}_-(x,-\omega_0(x))=\{\hat a_-, \hat b_-\}(x),\ \ \mbox{for every $x\in X$},
\end{equation}
where $\hat a_-(x):=a(x,-\omega_0(x))\in\mathcal{C}^\infty(X)$, $\hat b_-(x):=b(x,-\omega_0(x))\in\mathcal{C}^\infty(X)$. 
Note that $\{a,b\}_-$ is uniquely determined by \eqref{e-gue210721yyda}. 
Similarly, the transversal Poisson bracket  of $a$ and $b$ with respect to $\Sigma^+$
 is defined to be the element $\{a,b\}_+\in\hat S^{k+\ell-1}$ such that 
 \begin{equation}\label{e-gue210722yydIII}
\{a,b\}_+(x,\omega_0(x))=\{\hat a_+, \hat b_+\}(x),\ \ \mbox{for every $x\in X$},
\end{equation}
 where $\hat a_+(x):=a(x,\omega_0(x))\in\mathcal{C}^\infty(X)$, $\hat b_+(x):=b(x,\omega_0(x))\in\mathcal{C}^\infty(X)$. 
 
 In the local coordinates $x=(x_1,\ldots,x_{2n+1})$ satisfying $x(p)=0$ and \eqref{e-gue161219a}, it is straightforward to check that 
\begin{equation}\label{e-gue210721yydb}
\begin{split}
&\{a,b\}_-(p,-\omega_0(p))\\
&=(-\imath)\sum^n_{j=1}\Bigr(\frac{1}{2\mu_j}\bigr(\frac{\pr a}{\pr\ol z_j}(p,-\omega_0(p))-2i\mu_j\frac{\pr a}{\pr\ol\theta_j}(p,-\omega_0(p))\bigr)
\bigr(\frac{\pr b}{\pr z_j}(p,-\omega_0(p))+2i\mu_j\frac{\pr b}{\pr\theta_j}(p,-\omega_0(p))\bigr)\\
&\quad-\frac{1}{2\mu_j}\bigr(\frac{\pr a}{\pr z_j}(p,-\omega_0(p))+2i\mu_j\frac{\pr a}{\pr\theta_j}(p,-\omega_0(p))\bigr)
\bigr(\frac{\pr b}{\pr\ol z_j}(p,-\omega_0(p))-2i\mu_j\frac{\pr b}{\pr\ol\theta_j}(p,-\omega_0(p))\bigr)\Bigr),
\end{split}
\end{equation} 
where $\frac{\pr}{\pr\theta_j}=\frac{1}{2}(\frac{\pr}{\pr\eta_{2j-1}}-i\frac{\pr}{\pr\eta_{2j}})$, $\frac{\pr}{\pr\ol\theta_j}=\frac{1}{2}(\frac{\pr}{\pr\eta_{2j-1}}+i\frac{\pr}{\pr\eta_{2j}})$, 
$j=1,\ldots,n$.

We can now prove 

\begin{thm}\label{t-gue210706yyd}
Let $P\in L^{\ell}_{{\rm cl\,}}(X,T^{*0,q}X\boxtimes(T^{*0,q}X)^*)$ with scalar principal symbol, and let $Q\in L^{k}_{{\rm cl\,}}(X,T^{*0,q}X\boxtimes(T^{*0,q}X)^*)$ with scalar principal symbol, $\ell, k\leq0$, $\ell, k\in\mathbb Z$. 
We have $[T_P, T_Q]\in\Psi_{n-1+\ell+k}(X)$ and 
\begin{equation}\label{e-gue210706ycd}
\mbox{$\tau_{x,n_-}\sigma^0_{[T_P, T_Q],-}(x,x)\tau_{x,n_-}=\frac{1}{2}\pi^{-n-1}\abs{\det\mathcal{L}_x}\imath\{\sigma^0_P,\sigma^0_Q\}_-(x,-\omega_0(x))\tau_{x,n_-}$, for every $x\in X$}, 
\end{equation}
and if $q=n_-=n_+$, 
\begin{equation}\label{e-gue210706ycdI}
\mbox{$\tau_{x,n_+}\sigma^0_{[T_P, T_Q],+}(x,x)\tau_{x,n_+}=\frac{1}{2}\pi^{-n-1}\abs{\det\mathcal{L}_x}\imath\{\sigma^0_P,\sigma^0_Q\}_+(x,\omega_0(x))\tau_{x,n_+}$, for every $x\in X$}. 
\end{equation}
\end{thm}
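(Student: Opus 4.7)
The plan is to split the commutator into the contributions from $\Sigma^-$ and $\Sigma^+$ separately. Since $\mathrm{WF}'(S_-)\subset\Sigma^-\times\Sigma^-$ and $\mathrm{WF}'(S_+)\subset\Sigma^+\times\Sigma^+$ are disjoint conic sets, any mixed composition such as $S_-PS_+$, $S_+QS_-$, $S_-PS_-QS_+$ is smoothing. Hence
\[
[T_P,T_Q]\equiv [T_{P,-},T_{Q,-}]+[T_{P,+},T_{Q,+}]\quad\text{on }X,
\]
with the second summand vanishing unless $q=n_+$. I would work out the $-$ case; the $+$ case is obtained by replacing $\varphi_-$ by $\varphi_+=-\overline{\varphi_-}$, which swaps $-\omega_0(x)$ with $\omega_0(x)$ and flips each $\mu_j$, so the proof carries over verbatim.

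\textbf{Composition and the first order drop.} Starting from $T_{P,-},T_{Q,-}\in\Psi_{n+\ell}(X),\Psi_{n+k}(X)$ by Theorem~\ref{t-gue210701yyd}, the plan is to compute the composition $T_{P,-}\circ T_{Q,-}$ by the Melin--Sj\"ostrand complex stationary phase, repeating line-by-line the argument that led from \eqref{e-gue210701ycdI} to \eqref{e-gue210702yyd}, but with the factor $S_-$ on the right replaced by $T_{Q,-}$. This represents the composition as a Szeg\H{o}-type FIO
\[
T_{P,-}\circ T_{Q,-}\equiv\int_0^{\infty}e^{i\varphi_-(x,y)t}c^{P,Q}(x,y,t)\,dt
\]
with $c^{P,Q}\in S^{n+\ell+k}_{\mathrm{cl}}$, and the same diagonal computation that proved Theorem~\ref{t-gue210701yyd} (cf.~\eqref{e-gue210703ycd}) delivers
\[
c^{P,Q}_0(x,x)=\tfrac{1}{2}\pi^{-n-1}|\det\mathcal{L}_x|\,\sigma^0_P(x,-\omega_0(x))\,\sigma^0_Q(x,-\omega_0(x))\,\tau_{x,n_-}.
\]
This is symmetric in $(P,Q)$, so $\tau_{x,n_-}\sigma^0_{[T_{P,-},T_{Q,-}],-}(x,x)\tau_{x,n_-}=0$, and Lemma~\ref{l-gue210624yyd} immediately gives $[T_{P,-},T_{Q,-}]\in\Psi_{n-1+\ell+k}(X)$.

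\textbf{Extraction of the Poisson bracket.} To prove \eqref{e-gue210706ycd}, the plan is to push the stationary phase expansion one step further and to compute $c^{P,Q}_1(x,x)$ in the adapted coordinates of Theorem~\ref{t-gue161110g}, following the derivation \eqref{e-gue210702ycdI}--\eqref{e-gue210703yyda}. The resulting expression splits into three types of contributions: (i) a universal symmetric term of the form $\sigma^0_P(x,-\omega_0)\sigma^0_Q(x,-\omega_0)\cdot\mathcal{S}(x)$ involving the subleading Szeg\H{o} coefficient $s^1_-$; (ii) terms in which a subleading Toeplitz symbol $a^1_P$ or $a^1_Q$ multiplies the other leading symbol; (iii) the action of the first order operator $L$ of \eqref{e-gue210703yydI} on the product $\widetilde{\sigma^0_P}(\beta(x,y'),\widetilde\varphi_{-,x})\cdot\widetilde{\sigma^0_Q}(\beta(x,y'),y')$. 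Types (i) and (ii) are manifestly symmetric in $(P,Q)$ by the universality of the stationary phase formula, hence cancel under antisymmetrization $P\leftrightarrow Q$. For type (iii), the identity $\varepsilon_P=0$ of \eqref{e-gue210705yydI} kills all the ``cross'' contributions involving $h$, $v_1$ and $s^2_-$, leaving only the pure $L$-action. Using the explicit form of $L$ in \eqref{e-gue210703yydI} and antisymmetrizing, one is left with the combination
\[
\sum_{j=1}^{n}\frac{i}{|\mu_j|}\Bigl(\partial_{\overline z_j}\sigma^0_P\,\partial_{z_j}\sigma^0_Q-\partial_{z_j}\sigma^0_P\,\partial_{\overline z_j}\sigma^0_Q\Bigr),
\]
which, together with contributions from the $\partial_{\theta_j}$ directions of the extended symbols (coming from the dependence of $\widetilde\varphi_{-,x}$ on $x$), matches the local formula \eqref{e-gue210721yydb} for $i\{\sigma^0_P,\sigma^0_Q\}_-(p,-\omega_0(p))$. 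Multiplying by $\tfrac{1}{2}\pi^{-n-1}|\det\mathcal{L}_x|$ from the normalization yields \eqref{e-gue210706ycd}.

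\textbf{Main obstacle.} The reduction of the order via Lemma~\ref{l-gue210624yyd} and the cancellation of the symmetric contributions are formal. The actual technical core is the second order stationary phase bookkeeping: carefully tracking how the $\widetilde\varphi_{-,x}$-dependence in the extended symbols and the derivatives coming from $\beta(x,y')$ combine, and verifying that the mixture of horizontal ($\partial_{z_j},\partial_{\overline z_j}$) and cotangent-vertical ($\partial_{\theta_j},\partial_{\overline\theta_j}$) derivatives produced by $L$ assembles precisely into \eqref{e-gue210721yydb}. It is at this point that the apparently technical vanishing \eqref{e-gue210705yydI}, previously established for the consistency of $T_{P,-}S_-\equiv T_{P,-}$, plays the decisive role: it discards exactly the pieces that would otherwise spoil the clean Poisson-bracket identification.
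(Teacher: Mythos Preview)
Your proposal is correct and follows essentially the same approach as the paper: split into $\mp$ parts via wave front disjointness, apply complex stationary phase to see that the leading term of the composition is symmetric in $(P,Q)$ and invoke Lemma~\ref{l-gue210624yyd} for the order drop, then compute the subleading term using the vanishing \eqref{e-gue210705yydI} (the paper also needs a companion $\hat\varepsilon_Q=0$, proved the same way) together with the explicit derivative identities \eqref{e-gue210706yyde} to identify the antisymmetrized term with the transversal Poisson bracket via \eqref{e-gue210721yydb}. The only refinement worth noting is that in the paper the functions $f$ and $g$ entering the $L$-action are defined asymmetrically, $f(u)=\widetilde{\sigma^0_P}(\beta(0,u'),\cdot)$ versus $g(u)=\widetilde{\sigma^0_Q}(\beta(u,0),\cdot)$, and it is precisely this asymmetry that yields the selective vanishing in \eqref{e-gue210706yyde} and hence the correct sign pattern $1/\mu_j$ (rather than $1/|\mu_j|$) in the Poisson bracket.
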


\begin{proof}
We first assume that $q\neq n_+$. Let $x=(x_1,\ldots,x_{2n+1})$ be local coordinates of $X$ defined on an open set $D\subset X$. Fix $p\in D$. Take $x=(x_1,\ldots,x_{2n+1})$ so that $x(p)=0$ and \eqref{e-gue161219a} hold. We will use the same notations and assumptions as in the calculation before. From the calculation above, we have
 \begin{equation}\label{e-gue210706ycdII}
 \begin{split}
&\mbox{$(T_P\circ T_Q)(x,y)\equiv\int^\infty_0e^{i\varphi_{-}(x, y)t}a_{P,Q}(x, y, t)dt$ on $D$},\\
&a_{P,Q}(x, y, t)\sim\sum^\infty_{j=0}a^j_{P,Q}(x, y)t^{n-j+\ell+k}\text{ in }S^{n+\ell+k}_{1, 0}
(D\times D\times\mathbb{R}_+\,,T^{*0,q}X\boxtimes(T^{*0,q}X)^*),\\
&a^j_{P,Q}(x, y)\in\mathcal{C}^\infty(D\times D,T^{*0,q}X\boxtimes(T^{*0,q}X)^*),\ j\in\mathbb N_0, 
\end{split}\end{equation}
and 
\begin{equation}\label{e-gue210706ycdIII}
a^0_{P,Q}(x,y)=\Td\sigma^0_{T_P,-}(x,\beta(x,y))\Td\sigma^0_{T_Q,-}(\beta(x,y),y)+O(\abs{x-y}^N),
\end{equation}
for every $N\in\mathbb N$.
From \eqref{e-gue210706ycdIII} and notice that $\beta(x,x)=x$, we get 
\begin{equation}\label{e-gue210706yyda}
\mbox{$a^0_{P,Q}(x,x)-a^0_{Q,P}(x,x)=0$, for every $x\in D$}. 
\end{equation}

From Lemma~\ref{l-gue210624yyd} and \eqref{e-gue210706yyda}, we conclude that $[T_P,T_Q]\in\Psi_{n-1+\ell+k}(X)$. Put 
\begin{equation}\label{e-gue210723yyd}
\begin{split}
&f(u):=\Td{\sigma^0_P}(\beta(0,u'),\Td\varphi_{-,x}(\beta(0,u'),0)),\\
&g(u):=\Td{\sigma^0_Q}(\beta(u,0),\Td\varphi_{-,x}(\beta(u,0),0)).
\end{split}
\end{equation}
We can repeat the calculation before with minor change and conclude that 
\begin{equation}\label{e-gue210706yydb}
\begin{split}
&a^1_{P,Q,I_0,I_0}(p,p)\\
&=a^1_{P,I_0,I_0}(p,p)a^0_{Q,I_0,I_0}(p,p)+a^0_{P,I_0,I_0}(p,p)a^1_{Q,I_0,I_0}(p,p)\\
&\quad +f(p)g(p)C_0\sum_{\nu-\mu=1}\sum_{2\nu \geq 3\mu}\frac{\imath^{-1}2^{-\nu}}{\nu!\,\mu!}L^{\nu}(h(\sigma,u)^{\mu} s^0_{-,I_0,I_0}(0,u)v_1(0,u,\sigma)\sigma^ns^0_{-,I_0,I_0}(u,0))|_{u=0,\sigma=1}\\
&\quad+g(p)\varepsilon_P+f(p)\hat \varepsilon_Q\\
&\quad+\frac{1}{2}
\pi^{-n-1}
\abs{{\rm det\,}\mathcal L_p}\frac{1}{2}\sum^n_{j=1}\frac{1}{\abs{\mu_j}}\Bigr(\frac{\pr}{\pr\ol z_j}(f(u))|_{u=0}\frac{\pr}{\pr z_j}(g(u))|_{u=0}+\frac{\pr}{\pr z_j}(f(u))|_{u=0}\frac{\pr}{\pr\ol z_j}(g(u))|_{u=0}\Bigr),
\end{split}
\end{equation}
where $C_0=2^{-n+1}\pi^{n+1}\abs{\mu_1}^{-1}\cdots\abs{\mu_n}^{-1}$, $\varepsilon_P$ is given by \eqref{e-gue210703yydb} and 
\begin{equation}\label{e-gue210706yydc}
\begin{split}
\hat \varepsilon_Q=&\sum^{N_0}_{j=1}\imath^{-1}2^{-1}Z_j(g(u))|_{u=0}\ell_j\Bigr( s^0_{-,I_0,I_0}(0,u)\sigma^nv_1(0,u,\sigma)s^0_{-,I_0,I_0}(u,0)\Bigr)|_{u=0,\sigma=1}\\
&\quad+\sum^{N_1}_{j=1}\imath^{-1}\frac{1}{8}\hat Z_j(g(u))|_{u=0}P_j
\Bigr(h(\sigma,u)s^0_{-,I_0,I_0}(0,u)\sigma^nv_1(0,u,\sigma) s^0_{-,I_0,I_0}(u,0)\Bigr)|_{u=0,\sigma=1}\\
&\quad+\imath^{-1}2^{-1}L(g(u))|_{u=0}s^2_{-,I_0,I_0}(0,0)2^n,
\end{split}
\end{equation}
$Z_j$, $\ell_j$, $\hat Z_k$, $P_k$ are as in \eqref{e-gue210703yydb}, $j=1,\ldots,N_0$, $k=1,\ldots,N_1$, $N_0, N_1\in\mathbb N$.
From \eqref{e-gue210705yydI}, we know that $\varepsilon_P=0$. Similar, we can repeat the proof of \eqref{e-gue210705yydI} and conclude hat 
\begin{equation}\label{e-gue210706yydd}
\hat\varepsilon_Q=0.
\end{equation}
From \eqref{e-gue210701ycd} and \eqref{e-gue210723yyd}, we can check that 
\begin{equation}\label{e-gue210706yyde}
\begin{split}
&\frac{\pr}{\pr z_j}(g(u))|_{u=0}=\frac{\pr}{\pr\ol z_j}(f(u))|_{z=0}=0,\ \ j\in\set{1,\ldots,q},\\
&\frac{\pr}{\pr\ol z_j}(f(u))|_{u=0}=\Bigr(\frac{\pr\sigma^0_P}{\pr\ol z_j}-2i\abs{\mu_j}\frac{\pr\sigma^0_P}{\pr\ol\theta_j}\Bigr)(p,-\omega_0(p)),\ \ j\in\set{q+1,\ldots,n},\\
&\frac{\pr}{\pr z_j}(g(u))|_{u=0}=\Bigr(\frac{\pr\sigma^0_Q}{\pr z_j}+2i\abs{\mu_j}\frac{\pr\sigma^0_Q}{\pr\theta_j}\Bigr)(p,-\omega_0(p)),\ \ j\in\set{q+1,\ldots,n},\\
&\frac{\pr}{\pr\ol z_j}(g(u))|_{u=0}=\frac{\pr}{\pr z_j}(f(u))|_{u=0}=0,\ \ j\in\set{q+1,\ldots,n},\\
&\frac{\pr}{\pr z_j}(f(u))|_{u=0}=\Bigr(\frac{\pr\sigma^0_P}{\pr z_j}-2i\abs{\mu_j}\frac{\pr\sigma^0_P}{\pr\theta_j}\Bigr)(p,-\omega_0(p)),\ \ j\in\set{1,\ldots,q},\\
&\frac{\pr}{\pr\ol z_j}(g(u))|_{u=0}=\Bigr(\frac{\pr\sigma^0_Q}{\pr\ol z_j}+2i\abs{\mu_j}\frac{\pr\sigma^0_Q}{\pr\ol\theta_j}\Bigr)(p,-\omega_0(p)),\ \ j\in\set{1,\ldots,q},
\end{split}
\end{equation}
where $\frac{\pr}{\pr\theta_j}=\frac{1}{2}(\frac{\pr}{\pr\eta_{2j-1}}-i\frac{\pr}{\pr\eta_{2j-1}})$, $\frac{\pr}{\pr\ol\theta_j}=\frac{1}{2}(\frac{\pr}{\pr\eta_{2j-1}}+i\frac{\pr}{\pr\eta_{2j-1}})$, $j=1,\ldots,n$. 
From \eqref{e-gue210705yydI}, \eqref{e-gue210706yydb}, \eqref{e-gue210706yydd} and \eqref{e-gue210706yyde}, we deduce that 
\begin{equation}\label{e-gue210706yydp}
\begin{split}
&a^1_{P,Q,I_0,I_0}(p,p)\\
&=a^1_{P,I_0,I_0}(p,p)a^0_{Q,I_0,I_0}(p,p)+a^0_{P,I_0,I_0}(p,p)a^1_{Q,I_0,I_0}(p,p)\\
&\quad +f(p)g(p)C_0\sum_{\nu-\mu=1}\sum_{2\nu \geq 3\mu}\frac{\imath^{-1}2^{-\nu}}{\nu!\,\mu!}L^{\nu}(h(\sigma,u)^{\mu}\Td s^0_{-,I_0,I_0}(0,u)v(u)\sigma^n\Td s^0_-(u,0))|_{u=0,\sigma=1}\\
&\quad+\frac{1}{2}\pi^{-n-1}\abs{{\rm det\,}\mathcal{L}_p}\frac{1}{2}\sum^n_{j=q+1}\frac{1}{\abs{\mu_j}}\Bigr(\frac{\pr f}{\pr\ol z_j}(0)\frac{\pr g}{\pr z_j}(0)\Bigr)+\sum^q_{j=1}\frac{1}{\abs{\mu_j}}\Bigr(\frac{\pr f}{\pr z_j}(0)\frac{\pr g}{\pr\ol z_j}(0)\Bigr).
\end{split}
\end{equation}
From \eqref{e-gue210721yydb}, \eqref{e-gue210706yyde} and \eqref{e-gue210706yydp}, it is straightforward to check that 
\begin{equation}\label{e-gue210706yydz}
\begin{split}
&a^1_{P,Q,I_0,I_0}(p,p)-a^1_{Q,P,I_0,I_0}(p,p)\\
&=\frac{1}{2}\pi^{-n-1}\abs{{\rm det\,}\mathcal{L}_p}\imath\{\sigma^0_P,\sigma^0_Q\}_-(x,-\omega_0(x)).
\end{split}
\end{equation}
We get the theorem if $q\neq n_+$. 

Assume that $q=n_-=n_+$. From~\cite[Lemma 5.6]{HM14}, we see that $T_{P,-}\circ T_{Q,+}\equiv0$,  $T_{P,+}\circ T_{Q,-}\equiv0$. From this observation, we can repeat the procedure above and get the theorem. 
\end{proof} 

From Lemma~\ref{l-gue210624yyd} and Theorem~\ref{t-gue210706yyd}, we get 

\begin{thm}\label{t-gue210724yyd}
Let $P\in L^{\ell}_{{\rm cl\,}}(X,T^{*0,q}X\boxtimes(T^{*0,q}X)^*)$ with scalar principal symbol, and let $Q\in L^{k}_{{\rm cl\,}}(X,T^{*0,q}X\boxtimes(T^{*0,q}X)^*)$ with scalar principal symbol, $\ell, k\leq0$, $\ell, k\in\mathbb Z$. 
We have 
\begin{equation}\label{e-gue210724yyda}\begin{split}
&[T_{P,-}, T_{Q,-}]-T_{{\rm Op\,}(\imath\{\sigma^0_P,\sigma^0_Q\}_-),-} \in\Psi_{n-2+\ell+k}(X),\\
&[T_{P,+}, T_{Q,+}]-T_{{\rm Op\,}(\imath\{\sigma^0_P,\sigma^0_Q\}_+),+} \in\Psi_{n-2+\ell+k}(X).
\end{split}\end{equation}
\end{thm}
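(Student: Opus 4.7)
The plan is to apply Lemma~\ref{l-gue210624yyd} to the operator
\[
B := [T_{P,-}, T_{Q,-}] - T_{{\rm Op\,}(\imath\{\sigma^0_P,\sigma^0_Q\}_-),-},
\]
and to the analogous operator for the $+$ case when $q = n_- = n_+$. Lemma~\ref{l-gue210624yyd} upgrades an order-$k$ Szeg\H{o}-type Fourier integral operator to order $k-1$ provided its leading symbol vanishes on the diagonal after projection by $\tau_{x,n_\mp}$, which is exactly the mechanism that will produce the claimed order $n-2+\ell+k$.

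First I would verify that $B \in \Psi_{n-1+\ell+k}(X)$. By Theorem~\ref{t-gue210701yyd}, $T_{P,-} \in \Psi_{n+\ell}(X)$ and $T_{Q,-} \in \Psi_{n+k}(X)$, and Theorem~\ref{t-gue210706yyd} gives $[T_{P,-}, T_{Q,-}] \in \Psi_{n-1+\ell+k}(X)$. When $q = n_- = n_+$ the decoupling $T_{P,-}T_{Q,+} \equiv T_{P,+}T_{Q,-} \equiv 0$, cited from~\cite[Lemma 5.6]{HM14} inside the proof of Theorem~\ref{t-gue210706yyd}, ensures that the $-$ part of $[T_P,T_Q]$ coincides with $[T_{P,-}, T_{Q,-}]$ modulo smoothing, so the symbolic formulas of Theorem~\ref{t-gue210706yyd} do apply to $[T_{P,-}, T_{Q,-}]$. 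Since $\imath\{\sigma^0_P,\sigma^0_Q\}_-$ has order $\ell+k-1$, Theorem~\ref{t-gue210701yyd} also gives $T_{{\rm Op\,}(\imath\{\sigma^0_P,\sigma^0_Q\}_-),-} \in \Psi_{n-1+\ell+k}(X)$.

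Next I would verify the hypotheses of Lemma~\ref{l-gue210624yyd}. The identity $S^{(q)}B \equiv B \equiv B S^{(q)}$ is immediate from $S^{(q)}S_- \equiv S_- \equiv S_- S^{(q)}$, which follows from the disjointness ${\rm WF'\,}(S_-) \cap {\rm WF'\,}(S_+) = \emptyset$ forcing $S_- S_+ \equiv S_+ S_- \equiv 0$ and hence $S_-^2 \equiv S_-$. For the leading symbol condition, Theorem~\ref{t-gue210706yyd} yields
\[
\tau_{x,n_-}\sigma^0_{[T_{P,-},T_{Q,-}],-}(x,x)\tau_{x,n_-} = \tfrac{1}{2}\pi^{-n-1}|\det\mathcal{L}_x|\,\imath\{\sigma^0_P,\sigma^0_Q\}_-(x,-\omega_0(x))\,\tau_{x,n_-},
\]
while Theorem~\ref{t-gue210701yyd} applied to ${\rm Op\,}(\imath\{\sigma^0_P,\sigma^0_Q\}_-)$ produces exactly the same expression for $\tau_{x,n_-}\sigma^0_{T_{{\rm Op\,}(\imath\{\sigma^0_P,\sigma^0_Q\}_-),-}}(x,x)\tau_{x,n_-}$ after using $\tau_{x,n_-}^2 = \tau_{x,n_-}$. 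The two terms cancel. Because $B$ is a composition of $S_-$-type operators, its $+$ part is smoothing, so the corresponding $\tau_{x,n_+}$-condition in Lemma~\ref{l-gue210624yyd} (relevant only when $q = n_+$) is automatic. Lemma~\ref{l-gue210624yyd} then yields $B \in \Psi_{n-2+\ell+k}(X)$, the first identity in \eqref{e-gue210724yyda}.

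The second identity follows from the same argument with $S_-$, $\Sigma^-$, $\tau_{x,n_-}$ replaced by their $+$ counterparts, again using the vanishing of cross compositions when $q = n_- = n_+$. The genuinely hard input is already isolated in Theorem~\ref{t-gue210706yyd} and Lemma~\ref{l-gue210624yyd}; the present argument is essentially bookkeeping that matches the two symbolic formulas and invokes the symbol-improvement lemma. One minor care point is that $\imath\{\sigma^0_P,\sigma^0_Q\}_\mp$ is intrinsically an element of $\hat S^{\ell+k-1}$ (a function on $\Sigma^\mp$), so one must choose a classical extension to a full symbol on $T^*X$ before writing ${\rm Op\,}(\imath\{\sigma^0_P,\sigma^0_Q\}_\mp)$; but any such choice works, since $T_{{\rm Op\,}(\cdot),\mp}$ depends only on the restriction to $\Sigma^\mp$ modulo operators of lower order.
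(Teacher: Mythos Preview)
Your proposal is correct and follows exactly the paper's approach: the paper simply states that Theorem~\ref{t-gue210724yyd} follows from Lemma~\ref{l-gue210624yyd} and Theorem~\ref{t-gue210706yyd}, and you have supplied the details of that deduction. Your verification of the hypotheses of Lemma~\ref{l-gue210624yyd} (that $S^{(q)}B\equiv B\equiv BS^{(q)}$ via $S_-S_+\equiv0$ and $S_-^2\equiv S_-$, and that the projected leading symbols cancel on the diagonal by matching \eqref{e-gue210706ycd} against Theorem~\ref{t-gue210701yyd}) is precisely the intended argument.
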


Consider $\hat S$ (recall that $\hat S$ is given by \eqref{e-gue210722yydII}). For $a, b\in\hat S$, define 
\begin{equation}\label{e-gue210724yyd}
a\hat{+}b:=\mbox{leading term of $a+b$}. 
\end{equation}
For $a, b\in\hat S$, then $a\hat{+}b\in\hat S^j$, for some $j\in\mathbb Z$, $j\leq0$. We can identify $\hat S^j$ with all homogeneous functions on $\Sigma$ of degree $j$. 
Then, $\set{\hat S,\hat{+}}$ is a vector space and also $\hat S$ has natural algebraic structure, that is, if $a, b\in\hat S$, then $a.b\in\hat S$. 

A star product for the algebra of $\hat S$ with respect to $\Sigma^{\mp}$ is given by the power series \[a*b= \sum_{j=0}^{+\infty} C_{j,\mp}(a,b) \nu^{-j}  \]
such that $*$ is an associative $\mathbb C[[\nu]]$-linear product, that is, $(a*b)*c=a*(b*c)$, for all $a, b, c\in\hat S$ and $C_{0,\mp}(a,b)=a\cdot b$, $C_{1,\mp}(a,b)-C_{1,\mp}(b,a)=\imath\{a, b\}_{\mp}$, for all $a, b\in\hat S$. To simplify the notations, for $a\in\hat S$, we denote 
\begin{equation}\label{e-gue210724ycd}
T_a:=T_{{\rm Op\,}(a)},\ \ T_{a,-}:=T_{{\rm Op\,}(a),-},\ \ T_{a,+}:=T_{{\rm Op\,}(a),+}.
\end{equation}

We can now establish star product for $\hat S^j$.

\begin{thm}\label{t-gue210724yyda}
Recall that we work with the assumption that $q=n_-$. 
Let $a\in\hat S^\ell$, $b\in\hat S^k$, $\ell, k\in\mathbb Z$, $\ell, k\leq0$. We have
\begin{equation}\label{e-gue210724ycdI}
T_{a,-}\circ T_{b,-}-\sum^N_{j=0}T_{C_{j,-}(a,b),-}\in\Psi_{n-N-1+\ell+k}(X),
\end{equation}
for every $N\in\mathbb N_0$, where $C_{j,-}(a,b)\in\hat S^{\ell+k-j}$, $C_{j,-}$ is a universal bidifferential operator of order $\leq 2j$, $j=0,1,\ldots$, and 
\begin{equation}\label{e-gue210724ycdII}
\begin{split}
C_{0,-}(a,b)=ab,\\
C_{1,-}(a,b)-C_{1,-}(b,a)=\imath\{a,b\}_-.
\end{split}
\end{equation}

If $q=n_-=n_+$, we have
\begin{equation}\label{e-gue210724ycdIz}
T_{a,+}\circ T_{b,+}-\sum^N_{j=0}T_{C_{j,+}(a,b),\mp}\in\Psi_{n-N-1+\ell+k}(X),
\end{equation}
for every $N\in\mathbb N_0$, where $C_{j,+}(a,b)\in\hat S^{\ell+k-j}$, $C_{j,\mp}$ is a universal bidifferential operator of order $\leq 2j$, $j=0,1,\ldots$, and 
\begin{equation}\label{e-gue210724ycdIIz}
\begin{split}
C_{0,+}(a,b)=ab,\\
C_{1,+}(a,b)-C_{1,+}(b,a)=\imath\{a,b\}_{+}.
\end{split}
\end{equation}
\end{thm}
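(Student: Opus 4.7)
The plan is to construct $C_{j,-}(a,b)$ inductively on $j$, matching $T_{a,-}\circ T_{b,-}$ term by term with Toeplitz operators and invoking Lemma~\ref{l-gue210624yyd} to drop the order by one at each step. The main input is the complex stationary-phase expansion of the composition, carried out in coordinates satisfying \eqref{e-gue161219a} in the discussion preceding Theorem~\ref{t-gue210706yyd}: since $T_{a,-}$ and $T_{b,-}$ are Szeg\"o-type Fourier integral operators of orders $n+\ell$ and $n+k$ by Theorem~\ref{t-gue210701yyd}, the composition $T_{a,-}\circ T_{b,-}$ lies in $\Psi_{n+\ell+k}(X)$ and admits a full asymptotic expansion of the amplitude.

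For the base case, set $C_{0,-}(a,b)=ab\in\hat S^{\ell+k}$. Formula \eqref{e-gue210706ycdIII} together with $\beta(x,x)=x$ and Theorem~\ref{t-gue140205III} shows that $\tau_{x,n_-}\sigma^0_{T_{a,-}\circ T_{b,-},-}(x,x)\tau_{x,n_-}$ agrees with $\tau_{x,n_-}\sigma^0_{T_{C_{0,-}(a,b),-},-}(x,x)\tau_{x,n_-}$ on $X$; hence $R_0:=T_{a,-}\circ T_{b,-}-T_{C_{0,-}(a,b),-}$ satisfies the hypothesis of Lemma~\ref{l-gue210624yyd}, giving $R_0\in\Psi_{n-1+\ell+k}(X)$. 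Inductively, assume that $R_{N-1}:=T_{a,-}\circ T_{b,-}-\sum_{j=0}^{N-1}T_{C_{j,-}(a,b),-}\in\Psi_{n-N+\ell+k}(X)$. H\"ormander's complex stationary-phase formula~\cite[Theorem 7.7.5]{hor} in the form of \eqref{e-gue210702yyd} shows that the scalar leading amplitude $h_N(x)$ of $R_{N-1}$ on the diagonal is a sum of terms of the form $(P_\alpha a)(Q_\alpha b)$ evaluated at $(x,-\omega_0(x))$, where $P_\alpha,Q_\alpha$ are differential operators with orders summing to at most $2N$ and with coefficients depending only on $\mathcal L$, $\omega_0$ and $R$ at $x$. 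Identifying $\hat S^{\ell+k-N}$ with $\mathcal C^\infty(X)$ via restriction to unit points of $\Sigma^-$, choose $C_{N,-}(a,b)\in\hat S^{\ell+k-N}$ so that its value along $\Sigma^-$ equals $h_N$; Lemma~\ref{l-gue210624yyd} applied to $R_N:=R_{N-1}-T_{C_{N,-}(a,b),-}$ then yields $R_N\in\Psi_{n-N-1+\ell+k}(X)$ and closes the induction.

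The normalization $C_{0,-}(a,b)=ab$ is built in, and the commutator identity $C_{1,-}(a,b)-C_{1,-}(b,a)=\imath\{a,b\}_-$ is read off from Theorem~\ref{t-gue210706yyd} applied at $N=1$. For associativity, compute the triple composition $T_{a,-}\circ T_{b,-}\circ T_{c,-}$ in two ways: grouping as $(T_{a,-}\circ T_{b,-})\circ T_{c,-}$ and applying the established expansion twice produces the Toeplitz expansion associated with $(a*b)*c$, and symmetrically the other grouping produces that associated with $a*(b*c)$. Since the two operator compositions are equal and since, by iterating Lemma~\ref{l-gue210624yyd}, the principal symbol data of a Toeplitz operator of Szeg\"o type determines the operator modulo $\Psi_{-\infty}(X)$, the two formal series must agree coefficient by coefficient. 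The case $q=n_-=n_+$ for $C_{j,+}$ follows identically after using the vanishing $T_{P,-}\circ T_{Q,+}\equiv 0\equiv T_{P,+}\circ T_{Q,-}$ from~\cite[Lemma 5.6]{HM14} invoked in the proof of Theorem~\ref{t-gue210706yyd}.

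The main technical obstacle I anticipate is verifying that $h_N$ is indeed a universal bidifferential expression in $(a,b)$ of total order at most $2N$, independent of the auxiliary choices of almost analytic extensions, adapted coordinates, and cutoffs $\chi$ appearing in \eqref{e-gue210701ycdI}. Carrying this out rigorously amounts to tracking the differential operators $L_{j,x,y}$ in \eqref{e-gue210702yyd} coming from H\"ormander's stationary-phase formula, and showing that their action on the product amplitude $\Td a_P(x,\Td u',\Td\sigma t)\Td s_-(\Td u,y',t)$ can be reorganized so that the $a$- and $b$-dependence passes through derivatives at $x$ with coefficients determined only by the Levi form and by the normalizations of $\omega_0$ and $R$. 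This is structurally the same computation as the one leading to \eqref{e-gue210706yydp} for the subprincipal symbol, but must be iterated to all orders with uniform bookkeeping of the indices in H\"ormander's expansion.
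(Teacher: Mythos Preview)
Your proposal is correct and follows essentially the same route as the paper: the inductive construction of $C_{j,-}$ via Lemma~\ref{l-gue210624yyd}, the use of \eqref{e-gue210706ycdIII} together with $\beta(x,x)=x$ for the base case, and the derivation of the commutator identity by comparing the $N=1$ expansion with Theorem~\ref{t-gue210706yyd} (equivalently Theorem~\ref{t-gue210724yyd}) are exactly what the paper does. The paper is equally terse about the ``universal bidifferential'' claim, simply invoking the complex stationary phase formula, so the obstacle you flag in your last paragraph is real but is treated at the same level of detail in the paper; note also that associativity is not part of Theorem~\ref{t-gue210724yyda} but of the subsequent Theorem~\ref{t-gue210724yydc}, so that paragraph of your argument belongs there.
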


\begin{proof}
Let $C_{0,-}(a,b)=ab$. Let $B_0=T_{a,-}\circ T_{b,-}-T_{C_{0,-}(a,b),-}$. From \eqref{e-gue210706ycdIII}, we see that 
\begin{equation}\label{e-gue210707yydsz}
\sigma^0_{T_{a,-}\circ T_{b,-}}(x,x)=\sigma^0_{T_{C_{0,-}(a,b)},-}(x,x)\ \ \mbox{for every $x\in X$}.
\end{equation}
From Lemma~\ref{l-gue210624yyd}, we see that $B_0\in\Psi_{n-1+\ell+k}(X)$. Let $C_{1,-}(a,b)\in\hat S^{-1+\ell+k}$ so that 
\begin{equation}\label{e-gue210707yydtz}
\tau_{x,n_-}\sigma^0_{B_0,-}(x,x)\tau_{x,n_-}=\tau_{x,n_-}C_{1,-}(a,b)(x,-\omega_0(x))s^0_-(x,x)\tau_{x,n_-}\ \ \mbox{for every $x\in X$}.
\end{equation}
From complex stationary phase formula, we see that $C_1$ is a bidifferential operator of order $\leq 2$. From Lemma~\ref{l-gue210624yyd}, we see that 
\[T_{a,-}\circ T_{b,-}-\sum^1_{j=0}T_{C_{j,-}(a,b),-}\in\Psi_{n-2+\ell+k}(X).\]
Continuing in this way we get $C_{j,-}(a,b)\in\hat S^{-j+\ell+k}$, $C_j$ is a universal bidifferential operator of order $\leq 2j$, $j=0,1,\ldots$, such that \eqref{e-gue210724ycdI} holds. 

Now, from \eqref{e-gue210724ycdI}, we have 
\begin{equation}\label{e-gue210707yydwz}
[T_{a,-}, T_{b,-}]-T_{C_{1,-}(a,b)-C_{1,-}(b,a),-}\in\Psi_{n-2+\ell+k}(X).
\end{equation}
From \eqref{e-gue210724yyda} and \eqref{e-gue210707yydwz}, we get $C_{1,-}(a,b)-C_{1,-}(b,a)=\imath\{a,b\}_-$. The theorem follows. 
\end{proof}

\begin{thm}\label{t-gue210724yydc}
The star product
 \[a*b= \sum_{j=0}^{+\infty} C_{j,\mp}(a,b) \nu^{-j} ,\]
 $a, b\in\hat S$, is associative.
\end{thm}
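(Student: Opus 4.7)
The plan is to deduce associativity of the formal series $*$ directly from the genuine associativity of operator composition, using the asymptotic expansion of Theorem~\ref{t-gue210724yyda} together with a uniqueness statement extracted from Theorem~\ref{t-gue210701yyd}. The first step is to observe that the map $a\mapsto T_{a,-}$ is injective on principal parts: if $a\in\hat S^j$ is such that $T_{a,-}\in\Psi_{n-1+j}(X)$, then by Theorem~\ref{t-gue210701yyd} the leading diagonal symbol $\tfrac{1}{2}\pi^{-n-1}|\det\mathcal{L}_x|\,a(x,-\omega_0(x))\,\tau_{x,n_-}$ vanishes, whence $a(x,-\omega_0(x))\equiv 0$. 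Since elements of $\hat S^j$ have the form $f(x)(\sigma^0_{\triangle_b})^{j/2}$ and $\sigma^0_{\triangle_b}>0$ on $\Sigma^-$ (as $\sigma^0_{\Box^q_b}$ vanishes there while $\sigma^0_{R^*R}=1\neq 0$), restriction to $\Sigma^-$ is an isomorphism $\hat S^j\to\mathcal C^\infty(X)$, so $a=0$. In particular, the coefficients $C_{j,-}(a,b)\in\hat S^{\ell+k-j}$ of Theorem~\ref{t-gue210724yyda} are uniquely determined by $T_{a,-}\circ T_{b,-}$ modulo $\Psi_{n-j-1+\ell+k}(X)$.

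Next, fix $a\in\hat S^\ell$, $b\in\hat S^k$, $c\in\hat S^m$ and $N\in\mathbb N_0$. Applying Theorem~\ref{t-gue210724yyda} twice in one grouping,
\begin{equation*}
(T_{a,-}\circ T_{b,-})\circ T_{c,-}\ \equiv\ \sum_{i+j\le N}T_{C_{i,-}(C_{j,-}(a,b),c),-}\pmod{\Psi_{n-N-1+\ell+k+m}(X)},
\end{equation*}
and in the other grouping,
\begin{equation*}
T_{a,-}\circ(T_{b,-}\circ T_{c,-})\ \equiv\ \sum_{i+j\le N}T_{C_{i,-}(a,C_{j,-}(b,c)),-}\pmod{\Psi_{n-N-1+\ell+k+m}(X)}.
\end{equation*}
Since operator composition is associative, the two left-hand sides agree on the nose, so the two right-hand sides differ by an operator in $\Psi_{n-N-1+\ell+k+m}(X)$. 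Invoking the injectivity statement order by order, I would equate coefficients of each total order $r$, obtaining for every $r\in\mathbb N_0$
\begin{equation*}
\sum_{i+j=r}C_{i,-}\bigl(C_{j,-}(a,b),c\bigr)\ =\ \sum_{i+j=r}C_{i,-}\bigl(a,C_{j,-}(b,c)\bigr)\quad\text{in }\hat S^{\ell+k+m-r}.
\end{equation*}
This is exactly the $\nu^{-r}$-coefficient of the identity $(a*b)*c=a*(b*c)$, which establishes associativity of the $\mathbb C[[\nu]]$-linear product $*$.

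The $\Sigma^+$ case is handled identically from the corresponding half of Theorem~\ref{t-gue210724yyda} under the hypothesis $q=n_-=n_+$; here one also uses that $T_{P,-}\circ T_{Q,+}\equiv 0$ and $T_{P,+}\circ T_{Q,-}\equiv 0$ (as recorded in the proof of Theorem~\ref{t-gue210706yyd}), so the $\mp$ sectors do not cross-contaminate. The only delicate point is the uniqueness step in the first paragraph; once one recognizes it as the statement that $a\mapsto a|_{\Sigma^\mp}$ is a bijection on each $\hat S^j$, the rest of the argument is the standard cohomological extraction of associativity for a deformation quantization from the genuine associativity of the quantized product.
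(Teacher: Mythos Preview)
Your proposal is correct and follows essentially the same strategy as the paper: both deduce the coefficient identities $\sum_{i+j=r}C_{i,-}(C_{j,-}(a,b),c)=\sum_{i+j=r}C_{i,-}(a,C_{j,-}(b,c))$ from genuine associativity of $T_{a,-}\circ T_{b,-}\circ T_{c,-}$ together with the asymptotic expansion \eqref{e-gue210724ycdI}, extracting the equalities order by order. The paper phrases this as an induction on the order $k$ and records the key step as \eqref{e-gue210724yydt}, whereas you expand both groupings to a fixed depth $N$ and then peel off levels; these are the same argument. One point in your favor is that you spell out explicitly the injectivity statement (if $a\in\hat S^j$ and $T_{a,-}\in\Psi_{n-1+j}(X)$ then $a=0$, via Theorem~\ref{t-gue210701yyd} and the positivity of $\sigma^0_{\triangle_b}$ on $\Sigma^-$), which the paper uses tacitly when passing from \eqref{e-gue210724yydt} to the conclusion.
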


\begin{proof}
For simplicity, we assume that $q\neq n_+$. The proof for $q=n_-=n_+$ is similar. Let $a, b, c\in\hat S$. For simplicity, assume that 
$a, b, c\in\hat S^0$.
Notice the associativity is equivalent to the equality
	\begin{equation}\label{e-gue210724yyds}
	\sum_{\ell=0}^k C_{\ell,-}(a,C_{k-\ell,-}(b,c))= \sum_{l=0}^k C_{\ell,-}(C_{k-\ell,-}(a,b),c) , 
	\end{equation}
	for every $k\in\mathbb N_0$. It is clear that \eqref{e-gue210724yyds} holds for $k=0$. Assume that 
	\eqref{e-gue210724yyds} holds for $k\leq k_0$, for some $k_0\in\mathbb N_0$. From 
	\[(T_{a,-}\circ T_{b,-})\circ T_{c,-}=T_{a,-}\circ (T_{b,-}\circ T_{c,-})\]
	and induction assumption, it is straightforward to check that 
	\begin{equation}\label{e-gue210724yydt}
	T_{\sum_{\ell=0}^{k_0+1} C_{\ell,-}(a,C_{k_0+1-\ell,-}(b,c)),-}-T_{\sum_{\ell=0}^{k_0+1} C_{\ell,-}(C_{k_0+1-\ell,-}(a,b),c),-}\in\Psi_{n-k_0-2}(X).
	\end{equation}
	From \eqref{e-gue210724yydt}, we get \eqref{e-gue210724yyds} for $k=k_0+1$. The theorem follows.  
\end{proof}

Let 
\begin{equation}\label{e-gue210706ycdg}
\hat R:=\frac{1}{2}S^{(q)}(-iR+(-iR)^*)S^{(q)}: \Omega^{0,q}(X)\To\Omega^{0,q}(X),
\end{equation}
where $(-iR)^*$ is the adjoint of $-iR$ with respect to $(\,\cdot\,|\,\cdot\,)$. 
Then, $\hat R\in\Psi_{n+1}(X)$ with $\sigma^0_{\hat R,-}(x,x)\neq0$, for every $x\in X$, and if $q=n_-=n+$,  $\sigma^0_{\hat R,+}(x,x)\neq0$, for every $x\in X$.
Let $\hat H\in\Psi_{n-1}(X)$ with 
\begin{equation}\label{e-gue210706ycdh}
\begin{split}
&S^{(q)}\hat H=\hat H=\hat HS^{(q)},\\
&\hat R\hat H\equiv \hat R\hat H\equiv S^{(q)}.
\end{split}
\end{equation}
Note that $\hat H$ is uniquely determined by \eqref{e-gue210706ycdh} (up to some smoothing operators). 
We can repeat the proof of Theorem~\ref{t-gue210724yyda} and deduce 


\begin{thm}\label{t-gue210706ycdg}
Recall that we work with the assumption that $q=n_-$. 
Let $f, g\in\mathcal{C}^\infty(X)$. We have 
\begin{equation}\label{e-gue210706ycds}
T_{f,-}\circ T_{g,-}-\sum^N_{j=0}\hat H^jT_{\hat C_{j,-}(f,g),-}\in\Psi_{n-N-1}(X),
\end{equation}
for every $N\in\mathbb N_0$, where $\hat C_{j,-}(f,g)\in\mathcal{C}^\infty(X)$, $\hat C_{j,-}$ is a universal bidifferential operator of order $\leq 2j$, $j=0,1,\ldots$, and 
\begin{equation}\label{e-gue210706ycdt}
\begin{split}
\hat C_{0,-}(f,g)=fg,\\
\hat C_{1,-}(f,g)-\hat C_{1,-}(g,f)=\imath\{f,g\}.
\end{split}
\end{equation}

If $q=n_-=n_+$, we have 
\begin{equation}\label{e-gue210706ycdy}
T_{f,+}\circ T_{g,+}-\sum^N_{j=0}\hat H^jT_{\hat C_{j,+}(f,g),+}\in\Psi_{n-N-1}(X),
\end{equation}
for every $N\in\mathbb N_0$, where $\hat C_{j,+}(f,g)\in\mathcal{C}^\infty(X)$, $\hat C_{j,+}$ is a universal bidifferential operator of order $\leq 2j$, $j=0,1,\ldots$, and 
\begin{equation}\label{e-gue210706ycdp}
\begin{split}
\hat C_{0,+}(f,g)=fg,\\
\hat C_{1,+}(f,g)-\hat C_{1,+}(g,f)=\imath\{f,g\}.
\end{split}
\end{equation}
\end{thm}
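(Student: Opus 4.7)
The plan is to mirror the inductive construction in the proof of Theorem~\ref{t-gue210724yyda}, replacing the ansatz $\sum_{j=0}^{N}T_{C_{j,-}(f,g),-}$ (with $C_{j,-}(f,g)\in\hat S^{-j}$) by $\sum_{j=0}^{N}\hat H^{j}T_{\hat C_{j,-}(f,g),-}$ (with $\hat C_{j,-}(f,g)\in\mathcal C^\infty(X)$). The key technical input is a reduction lemma: for $a\in\hat S^{-j}$ written as $a=\tilde a(x)(\sigma^0_{\triangle_b})^{-j/2}$ with $\tilde a\in\mathcal C^\infty(X)$,
\[T_{a,-}\equiv\hat H^{j}T_{\tilde a,-}\pmod{\Psi_{n-j-1}(X)}.\]

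To verify the reduction lemma, note that along $\Sigma^-$ the transverse principal symbols coincide: since $\sigma^0_{\Box^q_b}$ vanishes on $\Sigma$, one has $\sigma^0_{\triangle_b}(x,\lambda\omega_0(x))=\lambda^{2}$, while $\sigma^0_{\hat R}(x,\lambda\omega_0(x))=-\lambda$ (using $\omega_0(R)=-1$), so $(\sigma^0_{\hat H})^{j}|_{\Sigma^-}=(\sigma^0_{\triangle_b})^{-j/2}|_{\Sigma^-}$. A stationary phase computation of the diagonal leading amplitude of $\hat H^{j}T_{\tilde a,-}$, parallel to the proof of Theorem~\ref{t-gue210706yyd}, shows that its $\tau_{x,n_-}(\cdot)\tau_{x,n_-}$ component matches that of $T_{a,-}$ on the diagonal; Lemma~\ref{l-gue210624yyd} then upgrades this pointwise match to the claimed congruence.

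Given the reduction lemma, the expansion \eqref{e-gue210706ycds} is built by induction on $N$. Set $\hat C_{0,-}(f,g)=fg$; then $T_{f,-}\circ T_{g,-}-T_{fg,-}\in\Psi_{n-1}(X)$ by Theorem~\ref{t-gue210701yyd} and Lemma~\ref{l-gue210624yyd}. Assuming
\[E_{N-1}:=T_{f,-}\circ T_{g,-}-\sum_{j=0}^{N-1}\hat H^{j}T_{\hat C_{j,-}(f,g),-}\in\Psi_{n-N}(X)\]
has been achieved, define $\hat C_{N,-}(f,g)\in\mathcal C^\infty(X)$ as the unique smooth function whose product with the non-vanishing transverse symbol of $\hat H^{N}$ matches the diagonal transverse leading symbol of $E_{N-1}$. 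Lemma~\ref{l-gue210624yyd} yields $E_{N}\in\Psi_{n-N-1}(X)$, and the stationary phase expansion exhibits $\hat C_{N,-}$ as a universal bidifferential operator of order $\le 2N$.

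The antisymmetric identity $\hat C_{1,-}(f,g)-\hat C_{1,-}(g,f)=\imath\{f,g\}$ follows by comparing the $N=1$ expansion applied to $[T_{f,-},T_{g,-}]$ with Corollary~\ref{c-gue210813yyd}: the former gives $[T_{f,-},T_{g,-}]\equiv\hat H\,T_{\hat C_{1,-}(f,g)-\hat C_{1,-}(g,f),-}\pmod{\Psi_{n-2}(X)}$, while the latter combined with the reduction lemma applied to $\imath\{f,g\}_-\in\hat S^{-1}$ (whose restriction to $\Sigma^-$ is $\imath\{f,g\}$ by \eqref{e-gue210721yyda}) yields $[T_{f,-},T_{g,-}]\equiv\hat H\,T_{\imath\{f,g\},-}\pmod{\Psi_{n-2}(X)}$, so the identity follows from the injectivity of the transverse symbol. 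The case $q=n_-=n_+$ is treated in parallel with $S_-$, $\Sigma^-$, $\{\cdot,\cdot\}_-$ replaced by $S_+$, $\Sigma^+$, $\{\cdot,\cdot\}_+$. The main technical obstacle is the reduction lemma, whose verification requires tracking phases and amplitudes through iterated complex stationary phase; the computations parallel those already carried out for Theorems~\ref{t-gue210706yyd} and \ref{t-gue210724yyda}.
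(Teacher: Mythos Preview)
Your proposal is correct and follows essentially the same route as the paper, which simply instructs one to repeat the proof of Theorem~\ref{t-gue210724yyda} with $\hat H^{j}T_{\hat C_{j,-}(f,g),-}$ in place of $T_{C_{j,-}(a,b),-}$. Your reduction lemma is a natural elaboration of that instruction, though strictly speaking the induction only needs the weaker fact that $\hat H^{N}T_{h,-}\in\Psi_{n-N}(X)$ with diagonal leading symbol equal to $h(x)$ times a non-vanishing factor, which already follows from the composition calculus and the defining relation $\hat R\hat H\equiv S^{(q)}$.
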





For $f, g\in\mathcal{C}^\infty(X)$, let 
\[f*g= \sum_{j=0}^{+\infty}\hat C_{j,\mp}(f,g) \nu^{-j}. \]
In general, $*$ is not associative. Whet $\hat R$ commutes with all $T_f$,  we can show that $*$ is associative. 
Assume that $X$ admits a transversal CR $\mathbb R$-action $\eta$. Take $R$ so that $R$ is induced by the $\mathbb R$-action. Suppose that $X$ admits a $\mathbb R$-invariant Hermitian metric $\langle\,\cdot\,|\,\cdot\,\rangle$ and let $(\,\cdot\,|\,\cdot\,)$ be the $L^2$ inner product for $\Omega^{0,q}(X)$ induced by $\langle\,\cdot\,|\,\cdot\,\rangle$. Put 
\begin{equation}\label{e-gue210725yyd}
\mathcal{C}^\infty(X)^{\mathbb R}:=\set{f\in\mathcal{C}^\infty(X);\,\eta^*f=f,\ \ \mbox{for every $\eta\in\mathbb R$}}.
\end{equation}
We can check that 
\begin{equation}\label{e-gue210725yydI}
\hat HT_{f,\mp}\equiv T_{f,\mp}\hat H,\ \ \mbox{for all $f\in\mathcal{C}^\infty(X)^{\mathbb R}$}. 
\end{equation}
Moreover, it is straightforward to see that 
\begin{equation}\label{e-gue210725yyda}
\mbox{$\hat C_{j,\mp}(f,g)\in\mathcal{C}^\infty(X)^{\mathbb R}$, for every $j=0,1,\ldots$, for all $f, g\in\mathcal{C}^\infty(X)^{\mathbb R}$}.
\end{equation}

\begin{thm}\label{t-gue210725yyd}
For $f, g\in\mathcal{C}^\infty(X)^{\mathbb R}$, let 
\[f*g= \sum_{j=0}^{+\infty}\hat C_{j,\mp}(f,g) \nu^{-j}. \]
The star product
 \[f*g= \sum_{j=0}^{+\infty}\hat C_{j,\mp}(f,g) \nu^{-j} ,\]
 $f, g\in \mathcal{C}^\infty(X)^{\mathbb R}$, is  associative. 
\end{thm}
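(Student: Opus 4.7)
The plan is to mimic the inductive argument used for Theorem~\ref{t-gue210724yydc}, with extra bookkeeping to handle the factors $\hat H^j$ appearing in the expansion \eqref{e-gue210706ycds}. Associativity of the formal power series is equivalent to the identity
\[
\sum_{\ell=0}^{k}\hat C_{\ell,\mp}(f,\hat C_{k-\ell,\mp}(g,h))=\sum_{\ell=0}^{k}\hat C_{\ell,\mp}(\hat C_{k-\ell,\mp}(f,g),h),\qquad k\in\mathbb N_0,
\]
for all $f,g,h\in\mathcal{C}^\infty(X)^{\mathbb R}$. I would prove this by induction on $k$; the case $k=0$ is immediate since both sides reduce to $fgh$.

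Fix $k_0\in\mathbb N_0$ and assume the identity holds for every $k\leq k_0$. By \eqref{e-gue210725yyda}, every iterated bracket $\hat C_{j,\mp}$ of $\mathbb R$-invariant functions is again $\mathbb R$-invariant, so by \eqref{e-gue210725yydI} the operator $\hat H$ commutes, modulo smoothing operators, with each Toeplitz operator appearing below. Applying Theorem~\ref{t-gue210706ycdg} twice to the left-hand side $(T_{f,\mp}\circ T_{g,\mp})\circ T_{h,\mp}$ and moving every occurrence of $\hat H^j$ to the far left via \eqref{e-gue210725yydI}, one obtains
\[
(T_{f,\mp}\circ T_{g,\mp})\circ T_{h,\mp}\equiv \sum_{k=0}^{N}\hat H^{k}\,T_{A_k,\mp}\pmod{\Psi_{n-N-1}(X)},\qquad A_k:=\sum_{i+j=k}\hat C_{i,\mp}(\hat C_{j,\mp}(f,g),h),
\]
and symmetrically
\[
T_{f,\mp}\circ(T_{g,\mp}\circ T_{h,\mp})\equiv \sum_{k=0}^{N}\hat H^{k}\,T_{B_k,\mp}\pmod{\Psi_{n-N-1}(X)},\qquad B_k:=\sum_{i+j=k}\hat C_{i,\mp}(f,\hat C_{j,\mp}(g,h)).
\]
Since operator composition is associative, subtracting these identities gives $\sum_{k=0}^{N}\hat H^{k}T_{A_k-B_k,\mp}\in\Psi_{n-N-1}(X)$ for every $N$.

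Choosing $N=k_0+1$ and using the inductive hypothesis to discard the terms with $k\leq k_0$, I get
\[
\hat H^{k_0+1}\,T_{D,\mp}\in\Psi_{n-k_0-2}(X),\qquad D:=A_{k_0+1}-B_{k_0+1}\in\mathcal{C}^\infty(X)^{\mathbb R}.
\]
From Theorem~\ref{t-gue210701yyd} and the composition rule for Szeg\H{o} type FIOs, the operator $\hat H^{k_0+1}T_{D,\mp}$ naturally lies in $\Psi_{n-k_0-1}(X)$, and by iterating the stationary phase computation used in the proof of Theorem~\ref{t-gue210706yyd} its diagonal leading symbol is
\[
\tau_{x,n_\mp}\,\sigma^0_{\hat H^{k_0+1}T_{D,\mp},\mp}(x,x)\,\tau_{x,n_\mp}=c(x)\,D(x)\,\tau_{x,n_\mp},
\]
where $c(x)=\frac{1}{2}\pi^{-n-1}|\det\mathcal L_x|\,\sigma^0_{\hat H,\mp}(x,x)^{k_0+1}$ is non-vanishing on $X$ because $\hat H$ parametrizes $\hat R$ via \eqref{e-gue210706ycdh} and $\sigma^0_{\hat R,\mp}(x,x)\neq 0$. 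Consequently, Lemma~\ref{l-gue210624yyd} (applied to $\hat H^{k_0+1}T_{D,\mp}$, which satisfies the required $S^{(q)}$-equivariance) forces $D\equiv 0$, giving the desired identity at level $k_0+1$ and closing the induction.

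The main obstacle is the symbol identification in the last step: one must verify that the $(\tau_{x,n_\mp},\tau_{x,n_\mp})$-component of the leading symbol of $\hat H^{k_0+1}T_{D,\mp}$ is a non-vanishing multiple of $D(x)$, so that its vanishing is equivalent to $D\equiv 0$. This is a direct but careful book-keeping exercise in the complex stationary phase formula that underlies Theorem~\ref{t-gue210706yyd}, using the explicit form of $\hat H$ and the non-vanishing of $\sigma^0_{\hat R,\mp}(x,x)$; once this is established, the rest of the argument is formal manipulation.
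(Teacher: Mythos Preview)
Your argument is correct and uses the same key ingredients as the paper (Theorem~\ref{t-gue210706ycdg}, the commutativity \eqref{e-gue210725yydI}, and induction on $k$), but the bookkeeping is organised differently. The paper inverts the $\hat H^j$ factors early by applying powers of $\hat R$, which yields directly $T_{\alpha_k-\beta_k,\mp}\in\Psi_{n-1}(X)$ and hence $\alpha_k=\beta_k$; you instead keep the $\hat H^j$ on the left, collect into $\sum_k\hat H^kT_{A_k-B_k,\mp}$, and then read off $D=A_{k_0+1}-B_{k_0+1}=0$ from the vanishing of the leading symbol of $\hat H^{k_0+1}T_{D,\mp}$. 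Your route is a little more streamlined in the expansion phase, at the price of the symbol computation at the end; the paper's route does more index manipulation but finishes with the simpler statement $T_{D,\mp}\in\Psi_{n-1}(X)$.

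One small remark: your final invocation of Lemma~\ref{l-gue210624yyd} is not quite the right reference. That lemma says ``$\tau\sigma^0\tau=0\Rightarrow$ order drops''; what you actually need is the trivial converse ``$B\in\Psi_{n-k_0-2}(X)\Rightarrow$ the order-$(n-k_0-1)$ leading coefficient vanishes on the diagonal'', which follows immediately from the definition of $\Psi_k(X)$ and the well-definedness of the diagonal symbol (cf.\ the discussion around \eqref{e-gue210626ycd}). With that adjustment, your symbol identification $c(x)D(x)\tau_{x,n_\mp}$ with $c(x)\neq0$ (which does follow from the stationary phase computation behind Theorem~\ref{t-gue210701yyd} together with \eqref{e-gue210706ycdh}) gives $D\equiv0$ as claimed.
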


\begin{proof}
Let $f, g, h\in\mathcal{C}^\infty(X)^{\mathbb R}$. 
Assume that $q\neq n_+$. For $q=n_-=n_+$, the proof is the same. 
Notice the associativity is equivalent to the equality
	\[ \sum_{l=0}^k\hat C_{l,-}(f,\,\hat C_{k-l,-}(g,h))=:\alpha_k=\beta_k:= \sum_{l=0}^k\hat C_{l,-}(\hat C_{k-l,-}(f,g),\,h) \]
	which is equivalent to
	\begin{equation}\label{eq:associativity} T_{\alpha_k,-} =T_{\beta_k,-} + A \end{equation}
	where $A\in\Psi_{n-1}(X)$. Now, we prove \eqref{eq:associativity} by induction over $k$. The case $k=0$ is easy, suppose that \eqref{eq:associativity} holds up to $k-1$, we shall prove \eqref{eq:associativity}. Now, for every $l=0,1,\dots,k$, from Theorem~\ref{t-gue210706ycdg} and the definition of $\hat C_{l,-}(\cdot,\,\cdot)$ in the previous paragraph implies that  
	\begin{equation*}  
	T_{\hat C_{l,-}(f,\,\hat C_{k-l,-}(g,h)),-}= \hat R^l(T_{f,-} T_{\hat C_{k-l,-}(g,h),-})- \sum_{s=0}^{l-1}\hat R^{l-s}T_{\hat C_s(f,{\hat C_{k-l}(g,h),-} ),-} +A_1 \,, \end{equation*}
	where $A_1\in\Psi_{n-1}(X)$ and similarly,	
	\begin{equation*}  T_{\hat C_{l,-}({\hat C_{k-l,-}(f,g),h}),-}=\hat R^l(T_{\hat C_{k-l,-}(f,g),-} T_{h,-})- \sum_{s=0}^{l-1}\hat R^{l-s} T_{\hat C_{s,-}({\hat C_{k-l,-}(f,g),h} ),-} +A_2 \,, \end{equation*}
	for an operator $A_2\in\Psi_{n-1}(X)$. Thus, summing over $l$ for a suitable $A_3\in\Psi_{n-1}(X)$, we have
	\begin{align*} 
	T_{\alpha_k-\beta_k,-} &= \sum_{l=0}^k\hat R^l ( T_{f,-}T_{\hat C_{k-l,-}(g,h),-} ) - \sum_{l=0}^k\sum_{j=0}^{l-1}\hat R^{l-j}T_{\hat C_{j,-}(f,{\hat C_{k-l,-}(g,h)} ),-} \\
	&- \sum_{l=0}^k\hat R^l ( T_{\hat C_{k-l,-}(f,g),-} T_{h,-} ) + \sum_{l=0}^k\sum_{j=0}^{l-1}\hat R^{l-j} T_{\hat C_{j,-}({\hat C_{k-l,-}(f,g),h} ),-}+A_3\,.
	 \end{align*}
	 The second summand can also be written
	 \[- \sum_{r=1}^k\hat R^r \sum_{l=r}^{k} T_{\hat C_{l-r,-}(f,{\hat C_{k-l,-}(g,h)} ),-}= - \sum_{r=1}^k\hat R^rT_{\sum_{s=0}^{k-r}\hat C_{s,-}(f,\hat C_{k-r-s,-}(g,h)),-} \]
	 and similarly for the forth summand, and then we can apply the inductive hypothesis; we get 
	 \begin{align} \label{eq:talphabeta}T_{\alpha_k-\beta_k,-} = \sum_{l=0}^k\hat R^l ( T_{f,-}T_{\hat C_{k-l,-}(g,h),-} )  - \sum_{l=0}^k\hat R^l ( T_{\hat C_{k-l,-}(f,g),-} T_{h,-} )+A_4\,  \end{align}
	 for a certain $A_4\in\Psi_{n-1}(X)$. By splitting the first sum into $l=0$ and $l\geq 1$, using for $l=0$ \eqref{e-gue210706ycds} and the fact that $T_{f,-}$ commutes with $\hat R$, we get
	 \begin{align*}
	 &\left(T_{f,-}\,\hat R^k T_{g,-}\,T_{h,-}-\sum_{s=0}^{k-1}T_{f,-}\hat R^{k-s}T_{\hat C_s(g,h),-} +A_5 \right)+\sum_{l=1}^{k}\hat R^l\,T_{f,-}\,T_{\hat C_{k-l,-}(g,h),-} \\
	 & =\hat R^k  T_{f,-}\,(T_{g,-}\,T_{h,-})+A_5\,,
	 \end{align*}
	 where again $A_5\in\Psi_{n-1}(X)$. Now, the second term on the right hand side of \eqref{eq:talphabeta} can be studied similarly and we get the theorem. 
	 \end{proof}

\section{$G$-invariant operators for locally free actions}\label{s-gue210812yyd}

Assume that $X$ admits a compact connected Lie group action $G$. In this work, we assume that 

\begin{ass}\label{a-gue170123I}
The $G$ action preserves $\omega_0$ and $J$. That is, $g^\ast\omega_0=\omega_0$ on $X$ and $g_\ast J=Jg_\ast$ on $HX$, for every $g\in G$, where $g^*$ and $g_*$ denote  the pull-back map and push-forward map of $G$, respectively. 
\end{ass}

Let $\mathfrak{g}$ denote the Lie algebra of $G$. For any $\xi \in \mathfrak{g}$, we write $\xi_X$ to denote the vector field on $X$ induced by $\xi$. That is, $(\xi_X u)(x)=\frac{\partial}{\partial t}\left(u(\exp(t\xi)\circ x)\right)|_{t=0}$, for any $u\in C^\infty(X)$.

\begin{defn}\label{d-gue170124}
The moment map associated to the form $\omega_0$ is the map $\mu:X \to \mathfrak{g}^*$ such that, for all $x \in X$ and $\xi \in \mathfrak{g}$, we have 
\begin{equation}\label{E:cmpm}
\langle \mu(x), \xi \rangle = \omega_0(\xi_X(x)).
\end{equation}
\end{defn}

Let $b$ be the nondegenerate bilinear form on $HX$ such that 
\begin{equation}\label{E:biform}
b(\cdot , \cdot) = d\omega_0(\cdot , J\cdot).
\end{equation}
In this work, we assume that 

\begin{ass}\label{a-gue170123II}
$0$ is a regular value of $\mu$, the action $G$ on $\mu^{-1}(0)$ is locally free and 
\begin{equation}\label{e-gue200120yydI}
\mbox{$\underline{\mathfrak{g}}_x\bigcap \underline{\mathfrak{g}}^{\perp_b}_x=\set{0}$ at every point $x\in Y$}, 
\end{equation}
where  $\underline{\mathfrak{g}}={\rm Span\,}(\xi_X;\, \xi\in\mathfrak{g})$, 
$\underline{\mathfrak{g}}^{\perp_b}=\set{v\in HX;\, b(\xi_X,v)=0,\ \ \forall \xi_X\in\underline{\mathfrak{g}}}$. 
\end{ass}

By Assumption~\ref{a-gue170123II}, $\mu^{-1}(0)$ is a $d$-codimensional submanifold of $X$. Let $Y:=\mu^{-1}(0)$ and let $HY:=HX\bigcap TY$. 
Note that if the Levi form is positive at $Y$, then \eqref{e-gue200120yydI} holds. 
Fix a $G$-invariant smooth Hermitian metric $\langle\, \cdot \,|\, \cdot \,\rangle$ on $\mathbb{C}TX$ so that $T^{1,0}X$ is orthogonal to $T^{0,1}X$, $\underline{\mathfrak{g}}$ is orthogonal to $HY\bigcap JHY$ at every point of $Y$, $\langle \, u \,|\, v \, \rangle$ is real if $u, v$ are real tangent vectors, $\langle\,R\,|\,R\,\rangle=1$. 
Fix $g\in G$. Let $g^*:\Lambda^r_x(\Complex T^*X)\To\Lambda^r_{g^{-1}\circ x}(\Complex T^*X)$ be the pull-back map. Since $G$ preserves $J$, we have $g^*:T^{*0,q}_xX\To T^{*0,q}_{g^{-1}\circ x}X$, for all $x\in X.$ Thus, for $u\in\Omega^{0,q}(X)$, we have $g^*u\in\Omega^{0,q}(X)$. Put $\Omega^{0,q}(X)^G:=\set{u\in\Omega^{0,q}(X);\, g^*u=u,\ \ \forall g\in G}.$
Since the Hermitian metric $\langle\,\cdot\,|\,\cdot\,\rangle$ on $\Complex TX$ is $G$-invariant, the $L^2$ inner product $(\,\cdot\,|\,\cdot\,)$ on $\Omega^{0,q}(X)$ 
induced by $\langle\,\cdot\,|\,\cdot\,\rangle$ is $G$-invariant. Let $u\in L^2_{(0,q)}(X)$ and $g\in G$, we can also define $g^*u$ in the standard way. Put 
\[L^2_{(0,q)}(X)^G:=\set{u\in L^2_{(0,q)}(X);\, g^*u=u,\ \ \forall g\in G}.\]
Put 
\[({\rm Ker\,}\Box^q_b)^G:={\rm Ker\,}\Box^q_b\bigcap L^2_{(0,q)}(X)^G.\] The $G$-invariant Szeg\H{o} projection is the orthogonal projection 
\[S^{(q)}_G:L^2_{(0,q)}(X)\To ({\rm Ker\,}\Box^q_b)^G\]
with respect to $(\,\cdot\,|\,\cdot\,)$. Let $S^{(q)}_G(x,y)\in\mathcal{D}'(X\times X,T^{*0,q}X\boxtimes(T^{*0,q}X)^*)$ be the distribution kernel of $S^{(q)}_G$.

As before, we will assume that $\Box^q_b$ has $L^2$ closed range. Let 
\[Q_G: L^2_{(0,q)}(X)\To L^2_{(0,q)}(X)^G\]
be the orthogonal projection with respect to $(\,\cdot\,|\,\cdot\,)$. Let
\begin{equation}\label{e-gue210729yyd}
\begin{split}
&S^G_-:=Q_G\circ S_-: L^2_{(0,q)}(X)\To ({\rm Ker\,}\Box^q_b)^G,\\
&S^G_+:=Q_G\circ S_+: L^2_{(0,q)}(X)\To ({\rm Ker\,}\Box^q_b)^G,
\end{split}
\end{equation}
where $S_-$, $S_+$ are as in Theorem~\ref{t-gue161109I}. We have $S^{(q)}_G=S^G_-+S^G_+$. If $q\neq n_+$, then $S^G_+\equiv0$. Recall that we work with the assumption that $q=n_-$. 

\begin{defn}\label{d-gue210729yyd}
Let $P\in L^{\ell}_{{\rm cl\,}}(X,T^{*0,q}X\boxtimes(T^{*0,q}X)^*)$, $\ell\in\mathbb Z$, $\ell\leq0$. We say that 
$P$ is in $L^{\ell}_{{\rm cl\,}}(X,T^{*0,q}X\boxtimes(T^{*0,q}X)^*)^G$ if $g^*(Pu)=P(g^*u)$, for every $u\in L^2_{(0,q)}(X)$ and every $g\in G$. 
\end{defn}

\begin{defn}\label{d-gue210801yyd}
Let $P\in L^\ell_{{\rm cl\,}}(X,T^{*0,q}X\boxtimes(T^{*0,q}X)^*)^G$ with scalar principal symbol, $\ell\leq0$, $\ell\in\mathbb Z$.
The $G$-invariant Toeplitz operator is given by 
\begin{equation}\label{e-gue210707ycdz}
\begin{split}
&T^G_P:=S^{(q)}_G\circ P\circ S^{(q)}_G: L^2_{(0,q)}(X)\To({\rm Ker\,}\Box^q_b)^G,\\
&T^G_{P,-}:=S^G_-\circ P\circ S^G_-: L^2_{(0,q)}(X)\To({\rm Ker\,}\Box^q_b)^G,\\
&T^G_{P,+}:=S^G_+\circ P\circ S^G_+: L^2_{(0,q)}(X)\To({\rm Ker\,}\Box^q_b)^G,
\end{split}
\end{equation}
where $S_-, S_+$ are as in \eqref{e-gue210707yyd}. 
If $q\neq n_+$, then $T^G_{P,+}\equiv0$ on $X$. 

Let $f\in\mathcal{C}^\infty(X)^G$, we write $T^G_f:=T^G_{M_f}$,  $T^G_{f,\mp}:=T^G_{M_f,\mp}$. 
\end{defn}

For every $x\in X$, put $G_x:=\set{g\in G;\, gx=x}$ and let $|G_x|$ denote the cardinal number of $G_x$. Fix $x\in\mu^{-1}(0)$, consider the linear map 
\[
\renewcommand{\arraystretch}{1.2}
\begin{array}{rll}
R_x:\underline{\mathfrak{g}}_x&\To&\underline{\mathfrak{g}}_x,\\
u&\To& R_xu,\ \ \langle\,R_xu\,|\,v\,\rangle=\langle\,d\omega_0(x)\,,\,Ju\wedge v\,\rangle.
\end{array}
\]
Let $\det R_x=\lambda_1(x)\cdots\lambda_d(x)$, where $\lambda_j(x)$, $j=1,2,\ldots,d$, are the eigenvalues of $R_x$. Fix $x\in\mu^{-1}(0)$, put $Y_x=\set{g\circ x;\, g\in G}$. $Y_x$ is a $d$-dimensional submanifold of $X$. The $G$-invariant Hermitian metric $\langle\,\cdot\,|\,\cdot\,\rangle$ induces a volume form $dv_{Y_x}$ on $Y_x$. Put 
\begin{equation}\label{e-gue170108em}
V_{{\rm eff\,}}(x):=\int_{Y_x}dv_{Y_x}.
\end{equation}

The following theorem generalizes the results  in \cite{hsiaohuang} to Toeplitz operators and to the action of $G$ on $X$ is locally free. 

\begin{thm} \label{thm:toeplitz}
Recall that we work with the assumption that $q=n_-$. 
With the notations and assumptions used above, let $P\in L^\ell_{{\rm cl\,}}(X,T^{*0,q}X\boxtimes(T^{*0,q}X)^*)^G$ with scalar principal symbol, $\ell\leq0$, $\ell\in\mathbb Z$.
Let $D$ be an open set in $X$ such that the intersection $\mu^{-1}(0)\cap D= \emptyset$. Then $T^G_{P,\mp}\equiv 0$ on $D$.
	
	Let $p\in \mu^{-1}(0)$ and let $G_p=\set{g_1,\ldots,g_r}$, $r=|G_p|$. Let $U$ a local neighborhood of $p$ with local coordinates $(x_1,\dots\,x_{2n+1})$. Then, the distributional kernel of $T_{P,-}^{G}$ satisfies
\begin{equation}\label{e-gue210802yyd}
T_{P,-}^{G}(x,y)\equiv\sum^{|G_p|}_{j=1}\int_0^{\infty} e^{\imath t\,\Phi_-( x,g_jy)}a_{P,-}( x,\,g_jy,\,t)\,\mathrm{d}t\ \ \ \mbox{on $U\times U$}.
\end{equation}
The phase $\Phi_-$ is described in Section \S\ref{sec:pre} and it is equal to the phase of $S_{-}^{G}(x,\,y)$ in~\cite{hsiaohuang}, the symbol 
$a_{P,-}$ satisfies the following properties
	\[a_{P,-}(x,\,y,\,t)\sim \sum_{j=0}^{+\infty} a_{P,-}^{j}(x,\,y)\,t^{n-d/2-j}\]
	in $S^{n-d/2}_{1,0}(U\times U\times \mathbb{R},\,T^{*\,0,q}X\boxtimes(T^{*\,0,q}X)^*)$, 
	\[a_{P,-}^{j}(x,\,y)\in\mathcal{C}^{\infty}(U\times U,T^{*0,q}X\boxtimes(T^{*0,q}X)^*),\ \ j\in\mathbb N_0,\]
	and for every $x\in\mu^{-1}(0)$, 
	\begin{equation}\label{e-gue210802yydI}
	a_{P,-}^{0}(x,\,x)=2^{d-1}\frac{1}{V_{{\rm eff\,}}(x)|G_x|}\pi^{-n-1+\frac{d}{2}}\abs{\det R_x^{-\frac{1}{2}}}\abs{\det\mathcal{L}_{x}}\sigma^0_P(x,-\omega_0(x))\tau_{x,n_-}.	\end{equation}
	
	Moreover, for every fix $y\in U$, we consider $T_{P,-}^{G}(x,y)$ as a distribution in $x$ variables, then
	\begin{equation}\label{e-gue210802yydII}
	T_{P,-}^{G}(x,\,y)\equiv\vert G_y\rvert\,\int_0^{\infty} e^{\imath t\,\Phi_-(x,y)}a_{P,-}(x,\,y,\,t)\,\mathrm{d}t\ \ \mbox{on $U$},\end{equation}
	for every $x,\,y\in U$, $\Phi_-$, $a_{P,-}$ are as above. 
			
	If $q\neq n_+$, then $T_{P,+}^{G}\equiv0$. The case $q=n_+=n_-$ can be stated similarly except that the phase of Toeplitz kernel $T_{P,+}^{G}(x,\,y)$ is given by $\Phi_+(x,\,y)$.
\end{thm}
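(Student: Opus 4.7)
The plan is to reduce the whole statement to the description of $S^G_\mp$ obtained in \cite{hsiaohuang}, combined with the composition calculus for complex Fourier integral operators of Szeg\"o type (cf. \eqref{e-gue210701ycdI}--\eqref{e-gue210703ycd} and the proof of Theorem~\ref{t-gue210701yyd}) and the complex stationary phase formula of Melin--Sj\"ostrand \cite{ms}. I would first record that, by \cite{hsiaohuang}, $S^G_-$ is smoothing away from $\mu^{-1}(0)$, and that near any $p\in\mu^{-1}(0)$ with $G_p=\{g_1,\dots,g_r\}$ its kernel has, in local coordinates, the form
\[
S^G_-(x,y)\equiv\sum_{j=1}^{|G_p|}\int_0^\infty e^{it\Phi_-(x,g_j y)} b_-(x,g_j y,t)\,dt,
\]
with $b_-\in S^{n-d/2}_{{\rm cl}}$ and an explicit leading term on the diagonal involving $V_{{\rm eff}}$, $|G_x|$, $\det R_x$, $\det\mathcal L_x$ and $\tau_{x,n_-}$. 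Since $P$ is properly supported, the first statement $T^G_{P,-}\equiv0$ on any $D$ disjoint from $\mu^{-1}(0)$ follows at once from $S^G_-\equiv0$ on such $D$.

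For the local expansion near $p\in\mu^{-1}(0)$, I would compose in two stages. First I analyze $P\circ S^G_-$: since $P\in L^\ell_{{\rm cl}}$ has scalar principal symbol and $S^G_-$ is a complex FIO of Szeg\"o type, $P\circ S^G_-$ has the same phases $\Phi_-(x,g_j y)$ and an amplitude in $S^{n-d/2+\ell}_{{\rm cl}}$ whose leading term on the diagonal over $\mu^{-1}(0)$ is
\[
\widetilde{\sigma^0_P}\bigl(x,\Phi_{-,x}(x,g_j y)\bigr)\,b_-(x,g_j y,t),
\]
computed as in the argument leading to \eqref{e-gue140205VIIa}. Using that $\Phi_-$ agrees with $\varphi_-$ on the set where it is real (and that $d_x\varphi_-(x,x)=-\omega_0(x)$), this substitution inserts the factor $\sigma^0_P(x,-\omega_0(x))$ on the diagonal over $\mu^{-1}(0)$. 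Composing on the left with $S^G_-$ and running the same complex stationary phase argument as in \eqref{e-gue210701ycdI}--\eqref{e-gue210702ycd}, in the auxiliary variables $(\sigma,u)$, produces exactly the announced local form \eqref{e-gue210802yyd} with phase $\Phi_-(x,g_j y)$ and amplitude $a_{P,-}\in S^{n-d/2+\ell}_{{\rm cl}}$; the order does not decrease because the critical phase function $F(\widetilde x,\widetilde y,\widetilde\sigma,\widetilde u)=\widetilde\Phi_-(\widetilde x,\widetilde u)\widetilde\sigma+\widetilde\Phi_-(\widetilde u,\widetilde y)$ has an isolated nondegenerate complex critical point at $\sigma=1$, $u=y$, as in the scalar case.

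For the leading coefficient \eqref{e-gue210802yydI} at $x\in\mu^{-1}(0)$, the critical-point data (the map $\beta$, the Jacobian $\det(F''_{\widetilde\sigma,\widetilde u}/(2\pi i))^{-1/2}$, the factors $V_{{\rm eff}}(x)$ and $|G_x|$ arising from the transversal integration in the $G$-direction) are identical to those computed for $S^G_-(x,x)$ in \cite{hsiaohuang}; the only new contribution comes from the extra factor $\sigma^0_P(x,-\omega_0(x))$ inherited from $P$, which when combined with the known leading term of $b_-(x,x,t)$ produces exactly the coefficient stated in \eqref{e-gue210802yydI}. The identity \eqref{e-gue210802yydII} follows by the same reorganization as for $S^G_-$ itself: for fixed $y$ the $|G_p|$ summands in \eqref{e-gue210802yyd} become congruent modulo smoothing, and the multiplicity count for the $G$-orbit through $y$ produces the prefactor $|G_y|$. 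For $q=n_-=n_+$ the argument for $T^G_{P,+}$ is identical with $\Phi_-$ replaced by $\Phi_+$ and $\tau_{x,n_-}$ by $\tau_{x,n_+}$, using \cite[Lemma 5.6]{HM14} to kill the cross terms $S^G_-PS^G_+$ and $S^G_+PS^G_-$; the case $q\neq n_+$ gives $T^G_{P,+}\equiv0$ because $S^G_+\equiv0$. The main obstacle is a purely bookkeeping one: verifying that the complex stationary phase applied to the triple composition reproduces, after summation over $G_p$, the same phase $\Phi_-$ and the same orbit factors as the Szeg\"o kernel itself (rather than some perturbed phase), and that the insertion of $P$ only modifies the leading symbol by the scalar $\sigma^0_P(x,-\omega_0(x))$; both are handled by exploiting the near-idempotency $S^G_-S^G_-\equiv S^G_-$ at the symbolic level, exactly as in the proof of Theorem~\ref{t-gue210701yyd}.
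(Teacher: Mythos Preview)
Your overall plan is sound, but it diverges from the paper's argument in a way that creates one real difficulty you have not addressed.

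The paper does not compose $S^G_-\circ P\circ S^G_-$ directly. Instead it uses the averaging formula
\[
T^G_{P,-}(x,y)=\int_G T_{P,-}(x,g\circ y)\,dV_G(g),
\]
so that the only oscillatory integral to analyze is the one coming from the \emph{non-equivariant} Toeplitz kernel $T_{P,-}$ (already known from Theorem~\ref{t-gue210701yyd}), followed by a stationary-phase analysis of the $G$-integral exactly as in \cite{hsiaohuang}. Localization near the isotropy elements $g_l\in G_y$ (or $G_p$), together with a conjugation-invariant bump function and the change of variables $v\mapsto\mathrm{Ad}_{g_l}v$, shows that each local contribution is independent of $l$ and produces the factors $|G_y|$ and \eqref{e-gue210802yyd}. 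This buys a direct reduction to \cite{hsiaohuang}: nothing beyond Theorem~\ref{t-gue210701yyd} and the Melin--Sj\"ostrand formula applied to the $d$-dimensional $G$-integral is needed.

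Your route, composing $S^G_-$ with $P S^G_-$, would also work, but the key claim that $F(\widetilde x,\widetilde y,\widetilde\sigma,\widetilde u)=\widetilde\Phi_-(\widetilde x,\widetilde u)\widetilde\sigma+\widetilde\Phi_-(\widetilde u,\widetilde y)$ has an \emph{isolated} nondegenerate critical point in $(\sigma,u)$ is false: by construction $\Phi_-$ is independent of the orbit coordinates $(u_1,\ldots,u_d)$, so $\partial_{u_j}F\equiv0$ for $j=1,\ldots,d$ and the critical set is $d$-dimensional. You would therefore have to first integrate out the orbit directions (yielding the $V_{{\rm eff}}$-type factor and the sum over $G_p$) and only then run the nondegenerate stationary phase in $(\sigma,u_{d+1},\ldots,u_{2n+1})$. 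Once you do that carefully you are essentially back to the paper's averaging picture; the ``near-idempotency $S^G_-S^G_-\equiv S^G_-$'' you invoke at the end is precisely what encodes this degeneracy, but it does not let you bypass it. So your proposal is not wrong, but the step you flagged as ``purely bookkeeping'' is in fact the substantive one, and the cleanest way to handle it is to switch to the averaging formula from the start.
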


\begin{defn}\label{d-gue210803yyd}
Let $H: \Omega^{0,q}(X)\To\Omega^{0,q}(X)$ be a continuous operator with distribution kernel $H(x,y)\in\mathcal{D}'(X\times X,T^{*0,q}X\boxtimes(T^{*0,q}X)^*)$. 
We say that $H$ is a complex Fourier integral operator of $G$-invariant Szeg\H{o} type of order $k\in\mathbb Z$ if $H$ is smoothing away $\mu^{-1}(0)$ and let $p\in \mu^{-1}(0)$ and let $G_p=\set{g_1,\ldots,g_r}$, $r=|G_p|$. Let $D$ a local neighborhood of $p$ with local coordinates $(x_1,\dots\,x_{2n+1})$. Then, the distributional kernel of $H$ satisfies
\[\begin{split}
&H(x,y)\equiv H_-(x,y)+H_+(x,y)\ \ \mbox{on $D$},\\
&\mbox{$H_-(x,y)\equiv\sum^{|G_p|}_{j=1}\int^\infty_0 e^{\imath t\Phi_-(x,g_jy)}a_-(x,g_jy,t)dt$ on $D$},\\
&\mbox{$H_+(x,y)\equiv\sum^{|G_p|}_{j=1}\int^\infty_0e^{\imath t\Phi_{+}(x,g_jy)t}a_+(x, g_jy, t)dt$ on $D$},\end{split}\]
where $a_-, a_+\in S^{k-\frac{d}{2}}_{{\rm cl\,}}(D\times D\times\mathbb{R}_+,T^{*0,q}X\boxtimes(T^{*0,q}X)^*)$, $a_+=0$ if $q\neq n_+$, where $\Phi_-$, $\Phi_+$ are as in 
Theorem~\ref{thm:toeplitz}. We write $\sigma^0_{H,-}(x,y)$ to denote the leading term of the expansion \eqref{e-gue161110r} of $a_{-}(x,y,t)$. If $q=n_+$, we write $\sigma^0_{H,+}(x,y)$ to denote the leading term of the expansion \eqref{e-gue161110r} of $a_{+}(x,y,t)$. Note that $\sigma^0_{H,-}(x,y)$ and $\sigma^0_{H,+}(x,y)$ depend on the choices of the phases $\varphi_-$ and $\varphi_+$ but $\sigma^0_{H,-}(x,x)$ and $\sigma^0_{H,+}(x,x)$ are independent of the choices of the phases $\varphi_-$ and $\varphi_+$.

Let $\Psi_k(X)^G$ denote the space of all complex Fourier integral operators of Szeg\H{o} type of order $k$. 
\end{defn}

\begin{thm} \label{thm:composition}
With the notations and assumptions used above, let $P\in L^\ell_{{\rm cl\,}}(X,T^{*0,q}X\boxtimes(T^{*0,q}X)^*)^G$ with scalar principal symbol, $Q\in L^k_{{\rm cl\,}}(X,T^{*0,q}X\boxtimes(T^{*0,q}X)^*)^G$ with scalar principal symbol, $\ell, k\leq0$, $\ell, k\in\mathbb Z$. We have 
\begin{equation}\label{e-gue210803yyd}\begin{split}
&[T^G_{P,-}, T^G_{Q,-}]-T^G_{{\rm Op\,}(\imath\{\sigma^0_P,\sigma^0_Q\}_-)} \in\Psi_{n-2+\ell+k}(X)^G,\\
&[T^G_{P,+}, T^G_{Q,+}]-T^G_{{\rm Op\,}(\imath\{\sigma^0_P,\sigma^0_Q\}_+)} \in\Psi_{n-2+\ell+k}(X)^G.
\end{split}\end{equation}
\end{thm}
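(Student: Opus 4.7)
The plan is to follow the same two-part strategy used in the non-equivariant case (Theorem~\ref{t-gue210706yyd} combined with Lemma~\ref{l-gue210624yyd} yielding Theorem~\ref{t-gue210724yyd}), but adapted to the $G$-invariant Fourier integral kernels described in Theorem~\ref{thm:toeplitz}. First I would verify that the composition $T^G_{P,\mp}\circ T^G_{Q,\mp}$ lies in $\Psi_{n+\ell+k}(X)^G$. Since $T^G_{P,\mp}$ and $T^G_{Q,\mp}$ are both smoothing away from $\mu^{-1}(0)$, the composition is too, so one may localize near a point $p\in\mu^{-1}(0)$. Using the representations
\begin{equation*}
T^G_{P,-}(x,y)\equiv\sum_{j=1}^{|G_p|}\int_0^\infty e^{it\Phi_-(x,g_jy)}a_{P,-}(x,g_jy,t)\,dt,
\end{equation*}
and the analogous one for $Q$, the composition is computed via the complex stationary phase formula of Melin--Sj\"ostrand applied to the oscillatory integral
\[
\sum_{j,k}\iiint e^{it(\Phi_-(x,g_j u)\sigma + \Phi_-(u,g_k y))} a_{P,-}(x,g_j u, t\sigma)\,a_{Q,-}(u,g_k y,t)\,t\,\chi\, du\,d\sigma\,dt,
\]
exactly as in the derivation of \eqref{e-gue210701ycdI}. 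The key point is that, after reducing to stationary $u$ on $\mu^{-1}(0)$, the phase $\Phi_-$ plays the same role that $\varphi_-$ played in Section~\ref{s-gue210628yyd}, and the $G$-sum structure is preserved: each pair $(g_j,g_k)$ reassembles into a single sum indexed by $G_p$, producing an element of $\Psi_{n+\ell+k}(X)^G$.

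Second, I would establish a $G$-invariant analogue of Lemma~\ref{l-gue210624yyd}: if $B\in\Psi_k(X)^G$ with $S^{(q)}_G B\equiv B\equiv BS^{(q)}_G$ and $\tau_{x,n_-}\sigma^0_{B,-}(x,x)\tau_{x,n_-}=0$ (and the analogous statement at $n_+$ when $q=n_-=n_+$), then $B\in\Psi_{k-1}(X)^G$. The proof should transfer verbatim from Lemma~\ref{l-gue210624yyd}: one picks $p\in\mu^{-1}(0)$, writes $B$ in the form of Definition~\ref{d-gue210803yyd}, applies the Malgrange preparation and complex stationary phase arguments to the $j$-th term $\int e^{it\Phi_-(x,g_j y)}b_j(x,g_j y,t)dt$ separately (each $g_j$ contribution behaves as in the non-equivariant case because $g_j$ is a diffeomorphism preserving $\omega_0$ and $J$), and then argues that the leading term must vanish to infinite order at the diagonal using $\ddbar_b B \equiv \ddbar_b^* B\equiv 0$ on $\mathrm{Ker}\,\Box_b^q$ together with the hypothesis.

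Third, the leading-symbol computation of the commutator on the diagonal is the critical step. I would adapt the calculation \eqref{e-gue210706ycdIII}--\eqref{e-gue210706yydz}: by the same stationary phase expansion applied at $x=y\in\mu^{-1}(0)$ (where only the $g_j=e$ term contributes to the stationary set to leading order, since the orbit contributions are handled globally by the symbol normalization \eqref{e-gue210802yydI}), the subprincipal term of $T^G_{P,-}\circ T^G_{Q,-}$ acquires the same $\{f,\cdot\}$-derivative structure. The extra factor $2^{d-1}(V_{{\rm eff}}(x)|G_x|)^{-1}\pi^{-n-1+d/2}|\det R_x^{-1/2}|$ appearing in \eqref{e-gue210802yydI} is shared by both sides of the claimed identity, so it cancels in the comparison. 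Applying the $G$-invariant vanishing lemma to
\[
[T^G_{P,-},T^G_{Q,-}] - T^G_{\mathrm{Op}(\imath\{\sigma^0_P,\sigma^0_Q\}_-)}
\]
then yields the theorem, and the $+$ case follows by the same argument together with $T^G_{P,-}\circ T^G_{Q,+}\equiv T^G_{P,+}\circ T^G_{Q,-}\equiv 0$ from~\cite[Lemma 5.6]{HM14}.

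The main obstacle will be the bookkeeping of the stationary phase expansion at the points of $\mu^{-1}(0)$: one must justify that, modulo lower-order contributions, only the diagonal group term $g_j=g_k$ contributes to the subprincipal symbol on the diagonal (the off-diagonal terms have non-stationary phase off $\mu^{-1}(0)$ or reorganize into the same transversal Poisson bracket), so that the subprincipal calculation reproduces \eqref{e-gue210706yydz} up to the volume factor. Everything else should follow from the template established in Section~\ref{s-gue210628yyd} combined with the local normal forms for the $G$-invariant Szeg\H{o} kernel from~\cite{hsiaohuang}.
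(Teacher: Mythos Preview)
Your proposal is a valid route, but it is substantially more laborious than the paper's, which proceeds in a single line by reduction to the non-equivariant case. The paper's argument is simply: since $P,Q$ are $G$-invariant and the metric is $G$-invariant, one has $T^G_{P,\mp}\equiv Q_G\circ T_{P,\mp}$ and hence $[T^G_{P,-},T^G_{Q,-}]-T^G_{{\rm Op}(\imath\{\sigma^0_P,\sigma^0_Q\}_-),-}\equiv Q_G\circ\bigl([T_{P,-},T_{Q,-}]-T_{{\rm Op}(\imath\{\sigma^0_P,\sigma^0_Q\}_-),-}\bigr)$. By Theorem~\ref{t-gue210724yyd} the operator in parentheses already lies in $\Psi_{n-2+\ell+k}(X)$; one then applies to it the very same $G$-averaging/stationary-phase argument that proved Theorem~\ref{thm:toeplitz} (i.e.\ integrating the kernel against $g\in G$ and localizing near the stabilizer) to land in $\Psi_{n-2+\ell+k}(X)^G$. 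No new subprincipal-symbol computation, no $G$-invariant version of Lemma~\ref{l-gue210624yyd}, and no double sum over $G_p\times G_p$ is required.

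Your approach instead rebuilds the whole calculus directly for the $G$-invariant kernels with phase $\Phi_-$: composing the $|G_p|$-fold sums, reorganizing the double sum, redoing the Melin--Sj\"ostrand expansion at $p\in\mu^{-1}(0)$, proving a $G$-invariant vanishing lemma, and recomputing the subprincipal term. This is correct in outline (and the paper does later state the $G$-invariant vanishing lemma, Lemma~\ref{l-gue210812yyd}, for a different purpose), but it duplicates work that the non-equivariant theorem has already done. The payoff of the paper's shortcut is that all delicate symbol identities (\eqref{e-gue210706yydb}--\eqref{e-gue210706yydz}) are inherited automatically; the payoff of your approach would be a proof that does not rely on knowing $S_-$ itself is $G$-equivariant, at the cost of the bookkeeping you flag in your last paragraph.
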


For every $j\in\mathbb Z$, $j\leq0$, put 
\begin{equation}\label{e-gue210803yydI}
\hat S^j_G\:=\set{f(x)(\sigma^0_{\triangle_b})^{\frac{j}{2}}\in\mathcal{C}^\infty(TX);\, f(x)\in\mathcal{C}^\infty(X)^G}
\end{equation}
and let 
\begin{equation}\label{e-gue210803yydII}
\hat S_G:=\bigcup_{j\in\mathbb Z, j\leq0}\hat S_j.
\end{equation}

 For $a, b\in\hat S_G$, define 
\begin{equation}\label{e-gue210803yydIII}
a\hat{+}b:=\mbox{leading term of $a+b$}. 
\end{equation}
To simplify the notations, for $a\in\hat S_G$, we denote 
\begin{equation}\label{e-gue210803ycd}
T^G_a:=T^G_{{\rm Op\,}(a)},\ \ T^G_{a,-}:=T^G_{{\rm Op\,}(a),-},\ \ T^G_{a,+}:=T_{{\rm Op\,}(a),+}.
\end{equation}

We can now establish star product for $\hat S^j_G$.

\begin{thm}\label{t-gue210803ycd}
Recall that we work with the assumption that $q=n_-$. 
Let $a\in\hat S^\ell_G$, $b\in\hat S^k_G$, $\ell, k\in\mathbb Z$, $\ell, k\leq0$. We have
\begin{equation}\label{e-gue210803ycdI}
T^G_{a,-}\circ T^G_{b,-}-\sum^N_{j=0}T^G_{C_{j,-}(a,b),-}\in\Psi_{n-N-1+\ell+k}(X)_G,
\end{equation}
for every $N\in\mathbb N_0$, where $C_{j,-}(a,b)\in\hat S^{\ell+k-j}_G$, $C_{j,-}$ is a universal bidifferential operator of order $\leq 2j$, $j=0,1,\ldots$, and 
\begin{equation}\label{e-gue210803ycdII}
\begin{split}
C_{0,-}(a,b)=ab,\\
C_{1,-}(a,b)-C_{1,-}(b,a)=\imath\{a,b\}_-.
\end{split}
\end{equation}

If $q=n_-=n_+$, we have
\begin{equation}\label{e-gue210803ycdIII}
T^G_{a,+}\circ T^G_{b,+}-\sum^N_{j=0}T^G_{C_{j,+}(a,b),+}\in\Psi_{n-N-1+\ell+k}(X)_G,
\end{equation}
for every $N\in\mathbb N_0$, where $C_{j,+}(a,b)\in\hat S^{\ell+k-j}_G$, $C_{j,+}$ is a universal bidifferential operator of order $\leq 2j$, $j=0,1,\ldots$, and 
\begin{equation}\label{e-gue210803ycdx}
\begin{split}
C_{0,+}(a,b)=ab,\\
C_{1,+}(a,b)-C_{1,+}(b,a)=\imath\{a,b\}_{+}.
\end{split}
\end{equation} 

Moreover, the star product
 \[a*b= \sum_{j=0}^{+\infty} C_{j,\mp}(a,b) \nu^{-j} ,\]
 $a, b\in\hat S_G$, is associative.
\end{thm}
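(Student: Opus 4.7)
The plan is to adapt the proofs of Theorem~\ref{t-gue210724yyda} and Theorem~\ref{t-gue210724yydc} to the equivariant setting by replacing the local Szeg\H{o} parametrix of Theorem~\ref{t-gue161109I} with its $G$-invariant counterpart from Theorem~\ref{thm:toeplitz}, and the commutator identity of Theorem~\ref{t-gue210706yyd} with its equivariant analog Theorem~\ref{thm:composition}. The key observation that makes the transcription possible is that, by Definition~\ref{d-gue210803yyd}, an element of $\Psi_k(X)^G$ is locally a finite sum of oscillatory integrals of Szeg\H{o} type, one for each element of the isotropy group $G_p$; since the off-diagonal branches (those with $g_j\neq e$) are smoothing on a neighborhood of $p$ in the diagonal direction, all diagonal symbol computations reduce to the $g_j=e$ branch, for which the analysis of Section~\ref{s-gue210628yyd} applies.

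The first step is to prove a $G$-invariant analog of Lemma~\ref{l-gue210624yyd}: if $B\in\Psi_k(X)^G$ satisfies $S^{(q)}_GB\equiv B\equiv BS^{(q)}_G$ and $\tau_{x,n_-}\sigma^0_{B,-}(x,x)\tau_{x,n_-}=0$ for every $x\in\mu^{-1}(0)$, then $B\in\Psi_{k-1}(X)^G$ (and analogously for the $+$ component when $q=n_-=n_+$). Working near $p\in\mu^{-1}(0)$ with local coordinates as in Theorem~\ref{thm:toeplitz}, the only branch contributing to the diagonal symbol of $B$ is the one with phase $\Phi_-(x,y)$; on this branch the Malgrange-preparation and almost-analytic-extension arguments used in the proof of Lemma~\ref{l-gue210624yyd} go through with obvious modifications, the only change being the bookkeeping of the extra $V_{{\rm eff}}(x)^{-1}\lvert G_x\rvert^{-1}\lvert\det R_x^{-1/2}\rvert$ factor in the leading symbol.

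With this reduction lemma available, one constructs the coefficients iteratively: set $C_{0,-}(a,b):=ab$. By \eqref{e-gue210802yydI}, the principal symbol of $T^G_{a,-}\circ T^G_{b,-}$ at a point of $\mu^{-1}(0)$ equals that of $T^G_{C_{0,-}(a,b),-}$, hence the difference lies in $\Psi_{n-1+\ell+k}(X)^G$. Now define $C_{1,-}(a,b)\in\hat S^{\ell+k-1}_G$ by requiring the diagonal symbol equation
\[
\tau_{x,n_-}\sigma^0_{T^G_{a,-}\circ T^G_{b,-}-T^G_{C_{0,-}(a,b),-},-}(x,x)\tau_{x,n_-}=C_{1,-}(a,b)(x,-\omega_0(x))\,\tau_{x,n_-}\sigma^0_{S^G_-,-}(x,x)\tau_{x,n_-}
\]
on $\mu^{-1}(0)$, extended $G$-invariantly; complex stationary phase on the oscillatory integral produced by composing two kernels of the form in Theorem~\ref{thm:toeplitz} identifies $C_{1,-}$ as a bidifferential operator of order $\leq 2$. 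Iterating yields $C_{j,-}$ of order $\leq 2j$ satisfying \eqref{e-gue210803ycdI}. The identity $C_{1,-}(a,b)-C_{1,-}(b,a)=\imath\{a,b\}_-$ then follows by subtracting the two expansions and invoking Theorem~\ref{thm:composition}; the $+$ case is identical.

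Finally, associativity is proved verbatim as in Theorem~\ref{t-gue210724yydc}: one shows by induction on $k$ that
\[
\sum_{\ell=0}^k C_{\ell,-}(a,C_{k-\ell,-}(b,c))=\sum_{\ell=0}^k C_{\ell,-}(C_{k-\ell,-}(a,b),c),
\]
using $(T^G_{a,-}\circ T^G_{b,-})\circ T^G_{c,-}=T^G_{a,-}\circ(T^G_{b,-}\circ T^G_{c,-})$ together with \eqref{e-gue210803ycdI} to convert any discrepancy into a $G$-invariant Toeplitz operator of strictly lower order in the filtration $\Psi_\bullet(X)^G$, which must vanish by the inductive hypothesis. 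The main obstacle is the first step, namely the $G$-invariant reduction lemma: one must carefully track the isotropy-group sum in Theorem~\ref{thm:toeplitz} and verify that the cross terms with $g_j\neq e$ do not interfere with the diagonal symbol argument. Once this lemma is in place, the remainder of the proof is a mechanical equivariant transcription of the material of Section~\ref{s-gue210628yyd}.
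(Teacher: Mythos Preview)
Your proposal is correct and broadly follows the paper's intent, but there is a difference in emphasis worth pointing out. The paper's proof is a single sentence: from Theorems~\ref{t-gue210724yyd}, \ref{t-gue210724yyda}, \ref{t-gue210724yydc} one repeats the proof of Theorem~\ref{thm:toeplitz} (i.e.\ the $G$-averaging argument) to obtain Theorems~\ref{thm:composition} and \ref{t-gue210803ycd}. Concretely, since $a,b\in\hat S_G$ and the Hermitian metric is $G$-invariant, one has $T^G_{a,-}\circ T^G_{b,-}\equiv Q_G\circ(T_{a,-}\circ T_{b,-})\circ Q_G$; applying the non-equivariant expansion \eqref{e-gue210724ycdI} inside and then averaging over $G$ by the stationary-phase computation of Theorem~\ref{thm:toeplitz} gives \eqref{e-gue210803ycdI} with the \emph{same} universal $C_{j,-}$ as in Theorem~\ref{t-gue210724yyda}. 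The $G$-invariance of $C_{j,-}(a,b)$ is automatic because the $C_{j,-}$ are built from $G$-invariant data. Associativity is then inherited directly from Theorem~\ref{t-gue210724yydc}.

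Your route instead redevelops the iteration directly in the class $\Psi_\bullet(X)^G$, which is also valid and is in fact closer to what the paper does later for Theorem~\ref{thm:compositionFouriercomponents} (your anticipated $G$-invariant reduction lemma appears there as Lemma~\ref{l-gue210812yyd}). One small imprecision: the branches with $g_j\neq e$ in Definition~\ref{d-gue210803yyd} are \emph{not} smoothing near the diagonal at $p$, since $g_jp=p$; rather, by \eqref{e-gue210802yydII} they all contribute the same amplitude $a_-(x,y,t)$, and $\sigma^0_{H,-}(x,x)$ is by definition the leading term of this single amplitude, so no branch bookkeeping is needed for the diagonal symbol. Also, your $C_{1,-}$ is a priori only determined on $\mu^{-1}(0)$; you should note that any smooth $G$-invariant extension to $X$ works because $T^G_{c,-}$ is smoothing away from $\mu^{-1}(0)$.
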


\subsection{Fourier component of $G$-invariant operators in the presence of a locally free circle action}

Before starting the analysis of $G$-invariant Szeg\H{o} operator in the presence of a locally free circle action, we shall expose some historical remarks. When $X$ is the circle bundle of a quantizable K\"ahler manifold R. Paoletti \cite{pao} investigated the asymptotics of the $G$-invariant Szeg\H{o} kernel by adapting Heisenberg local coordinates for locally free actions; it should be mentioned that R. Paoletti studied equivariant kernel of projectors onto any isotype. From the perspective of the Bergman kernel, the analysis of $G$-invariant Toeplitz operators algebra was carried out by \cite{mz} using analytic localization techniques of Bismut and Lebeau for spin\textsuperscript{c} Dirac operators, X. Ma and W. Zhang studied among other things the commutator of $G$-invariant Toeplitz operators and their relations with symplectic reduction. 

By the motivation explained in the previous paragraph we now assume that $X$ admits a CR and transversal $S^1$-action which is locally free on $\mu^{-1}(0)$, $T\in \mathcal{C}^{\infty}(X,\,TX)$ denotes the global real vector field given by the infinitesimal circle action. We will take $T$ to be our Reeb vector field $R$. 
We assume that 

\begin{ass}\label{a-gue170128}
\[
\mbox{$T$ is transversal to the space $\underline{\mathfrak{g}}$ at every point $p\in\mu^{-1}(0)$},
\]
\begin{equation}\label{e-gue170111ryII}
e^{i\theta}\circ g\circ x=g\circ e^{i\theta}\circ x,\  \ 
\mbox{for every $x\in X$, $\theta\in[0,2\pi[$, $g\in G$}, 
\end{equation}
and 
\[
\mbox{$G\times S^1$ acts locally free near $\mu^{-1}(0)$}. 
\]
\end{ass}

Let us state the last piece of notation. Since the action of $S^1$ is locally free on $\mu^{-1}(0)$, then $\mu^{-1}(0)/S^1$ is an orbifold, let us denote with $$\pi\,:\,\mu^{-1}(0)\rightarrow \mu^{-1}(0)/S^1$$ the projection. Furthermore the action of $S^1$ commutes with the one of $G$, then we have an action of $G$ on $\mu^{-1}(0)/S^1$. Notice that the action of $G$ is locally free on $\mu^{-1}(0)/S^1$, in fact by the transversality assumption we have $$\mathbb{C}T_xX=\mathbb{C}T(x)\oplus \mathbb{C}T_x^{1,0}X\oplus \mathbb{C}T_x^{0,1}X \qquad(x\in X)$$ and also $\xi_X(x)\in \mathrm{Ker}(\omega_0)$ for each $\xi\in\mathfrak{g}$ whenever $x\in\mu^{-1}(0)$.
Given $x\in \mu^{-1}(0)$ and $g\in G_{\pi(x)}$, there exist $\vert S_x^1 \rvert$ elements $e^{\imath\,\theta_{g,j}}\in S^1$ such that 
\[g\circ x= e^{-\imath\,\theta_{g,j}}\circ x\,. \]

Let $u\in\Omega^{0,q}(X)$ be arbitrary. Define
\[
Tu:=\frac{\pr}{\pr\theta}\bigr((e^{i\theta})^*u\bigr)|_{\theta=0}\in\Omega^{0,q}(X).
\]
For every $m\in\mathbb Z$, let
\[
\renewcommand{\arraystretch}{1.2}
\begin{array}{ll}
&\Omega^{0,q}_m(X):=\set{u\in\Omega^{0,q}(X);\, Tu=imu},\ \ q=0,1,2,\ldots,n,\\
&\Omega^{0,q}_{m}(X)^G=\set{u\in\Omega^{0,q}(X)^G;\, Tu=imu},\ \ q=0,1,2,\ldots,n.
\end{array}
\]
We denote $\mathcal{C}^\infty_m(X):=\Omega^{0,0}_m(X)$, $C^\infty_m(X)^G:=\Omega^{0,0}_m(X)^G$. We now assume that the Hermitian metric $\langle\,\cdot\,|\,\cdot\,\rangle$ on $\Complex TX$ is $G\times S^1$ invariant.  Then the $L^2$ inner product $(\,\cdot\,|\,\cdot\,)$ on $\Omega^{0,q}(X)$ 
induced by $\langle\,\cdot\,|\,\cdot\,\rangle$ is $G\times S^1$-invariant. We then have 
\[
\renewcommand{\arraystretch}{1.2} 
\begin{array}{ll}
&Tg^*\ol{\pr}^*_b=g^*T\ol{\pr}^*_b=\ol{\pr}^*_bg^*T=\ol{\pr}^*_bTg^*\ \ 
\mbox{on $\Omega^{0,q}(X)$},\ \ \forall g\in G,\\
&Tg^*\Box^q_b=g^*T\Box^q_b=\Box^q_bg^*T=\Box^q_bTg^*\ \ \mbox{on $\Omega^{0,q}(X)$},\ \
\forall g\in G,
\end{array}
\]
where $\ol{\pr}^*_b$ is the $L^2$ adjoint of $\ddbar_b$ with respect to $(\,\cdot\,|\,\cdot\,)$. 

Let $L^2_{(0,q), m}(X)^G$ be
the completion of $\Omega_m^{0,q}(X)^G$ with respect to $(\,\cdot\,|\,\cdot\,)$. 
The $m$-th $G$-invariant Szeg\H{o} projection is the orthogonal projection 
$S^{(q)}_{G,m}:L^2_{(0,q)}(X)\To ({\rm Ker\,}\Box^q_b)^G_m$
with respect to $(\,\cdot\,|\,\cdot\,)$. For $f\in\mathcal{C}^\infty(X)^G_0$, the $m$-th $G$-invariant Toeplitz operator is given by 
\begin{equation}\label{e-gue210803yyda}
T^G_{f,m}:=S^{(q)}_{G,m}\circ M_f\circ S^{(q)}_{G,m}: L^2_{(0,q)}(X)\To L^2_{(0,q),m}(X)^G.
\end{equation}

The following is a generalization of Theorem $1.8$ in \cite{hsiaohuang} when the action of $G$ on $X$ is locally free.

\begin{thm} \label{thm:kfourierszego}
	Let $(X,\,T^{1,0}X)$ be a compact CR manifold with a locally free action of a compact connected Lie group $G$ and transversal free action of $S^1$ satisfying the assumptions above. $f\in\mathcal{C}^\infty(X)^G_0$.If $q\neq n_-$, then $T^G_{f,m}\equiv O(m^{-\infty})$ on $X$.
	
	Suppose $q= n_-$ and let $D$ be an open set in $X$ such that the intersection $\mu^{-1}(0)\cap D= \emptyset$. Then $S^{(q)}_{G,m}\equiv O(m^{-\infty})$ on $D$.
	
	Let $p\in \mu^{-1}(0)$ and let $U$ a local neighborhood of $p$ with local coordinates $(x_1,\dots\,x_{2n+1})$. Then, if $q= n_-$, for every fix $y\in U$, we consider $T_{f,m}^{G}(x,y)$ as a $m$-dependent smooth function in $x$, then 
	\[T^G_{f,m}(x,\,y)=\sum_{h\in G_{\pi(y)}}\sum_{j=1}^{\lvert S_x^1 \rvert}e^{\imath m\,\theta_{h,j}} e^{\imath m \,\Psi(x,\,y)}\,b(x,\,y,\,m)+O(m^{-\infty})\,. \]
	for every $x\in U$. The phase $\Psi$ is the same phase function obtained for the free case (see Section~\ref{s-gue210804yyd}), the symbol satisfies
	\[b(x,\,y,\,m)\in S^{n-d/2}_{\mathrm{loc}}(1,\,U\times U,\,T^{*\,(0,q)}X\boxtimes(T^{*\,(0,q)})^*)\, \]
	and the leading term of $b(x,y,m)$ along the diagonal at $x\in\mu^{-1}(0)$ is given by 
		\[b_0(x,\,x):= 2^{d-1}\frac{1}{V_{{\rm eff\,}}(x)|G_x|}\pi^{-n-1+\frac{d}{2}}\abs{\det R_x}^{-\frac{1}{2}}\abs{\det\mathcal{L}_{x}}\tau_{x,n_-}. \]
		
\end{thm}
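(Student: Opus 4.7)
The plan is to realize $T^G_{f,m}$ as a single oscillatory integration of $T^G_f$ in the $S^1$ parameter, and then apply the complex stationary phase method of Melin--Sj\"ostrand in the combined variables $(t,\theta)$. Because $S^1$ commutes with $G$ and $f\in\mathcal{C}^\infty(X)^G_0$ is $S^1$-invariant, $T^G_f$ commutes with the $S^1$-action, so
\begin{equation}\label{e-plan-fourier}
T^G_{f,m}(x,y)=\frac{1}{2\pi}\int_0^{2\pi}e^{-im\theta}\,T^G_f(e^{i\theta}\circ x,\,y)\,d\theta.
\end{equation}
If $D\cap\mu^{-1}(0)=\emptyset$ then Theorem~\ref{thm:toeplitz} gives $T^G_f\equiv 0$ on $D$, so the second assertion follows. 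For $q\neq n_-$ the only possibly non-vanishing contribution comes from $S^G_+$ in the case $q=n_+\neq n_-$; its phase $\Phi_+=-\overline{\Phi_-}$ satisfies $R_x\Phi_+(x,x)=-1$, making the stationary equation below have no solution in the integration domain $s>0$, whence non-stationary phase yields $O(m^{-\infty})$.

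For the main case $q=n_-$ near $p\in\mu^{-1}(0)$, I would substitute the local representation \eqref{e-gue210802yyd} of $T^G_{f,-}$ into \eqref{e-plan-fourier} and rescale $t=ms$, producing
\[
T^G_{f,m}(x,y)\equiv\frac{m}{2\pi}\sum_{j=1}^{|G_p|}\int_0^{2\pi}\!\!\int_0^{\infty}e^{im(s\Phi_-(e^{i\theta}x,g_jy)-\theta)}\,a_{f,-}(e^{i\theta}x,g_jy,ms)\,ds\,d\theta.
\]
Using $d_x\Phi_-(x,x)=-\omega_0(x)$ together with $\omega_0(R)=-1$ gives $R_x\Phi_-(x,x)=1$ on $\mu^{-1}(0)$, so the stationary equations $\Phi_-(e^{i\theta}x,g_jy)=0$ and $sR_x\Phi_-(e^{i\theta}x,g_jy)=1$ cut out the clean real critical manifold $\{s=1,\,e^{i\theta}x=g_jy\}$ in $\mu^{-1}(0)$. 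Under Assumption~\ref{a-gue170128}, for $x=y$ near $p$ the solutions of $e^{i\theta}y=g_jy$ are parametrized precisely by pairs $(h,k)$ with $h\in G_{\pi(y)}$ and $1\le k\le|S^1_y|$, where $\theta=\theta_{h,k}$ satisfies $hy=e^{-i\theta_{h,k}}y$. Applying the Melin--Sj\"ostrand formula at each critical point produces a classical symbol $b(x,y,m)\in S^{n-d/2}_{\rm loc}$, a phase $\Psi(x,y)$ obtained by eliminating $(s,\theta)$ from $s\Phi_--\theta$, and the prefactor $e^{im\theta_{h,k}}$ arising as the value of $e^{-im\theta}$ at each critical point.

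Agreement of $\Psi$ with the phase constructed in the free-action analysis of Section~\ref{s-gue210804yyd} follows from the uniqueness of such positive phases modulo infinite-order vanishing on the diagonal, exactly as in \cite[Theorem 5.4]{HM14}. The leading symbol $b_0(x,x)$ is the product of the diagonal value \eqref{e-gue210802yydI} of $a^0_{f,-}(x,x)$ with the Gaussian Hessian factor $|\det F''_{(s,\theta)}/(2\pi i)|^{-1/2}$ evaluated at the critical point of $F(s,\theta)=s\Phi_-(e^{i\theta}x,y)-\theta$; since $\partial_s^2 F|_{\rm crit}=0$ and $\partial_s\partial_\theta F|_{\rm crit}=R_x\Phi_-(x,x)=1$, the Hessian has determinant $-1$ and contributes the factor of $2\pi$ needed to cancel the $(2\pi)^{-1}$ from \eqref{e-plan-fourier} and reproduce the stated expression. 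The main obstacle is the combinatorial step showing that the sum over $G_p$ in Theorem~\ref{thm:toeplitz} enlarges after $\theta$-integration to the full sum over $G_{\pi(y)}\times\{1,\dots,|S^1_y|\}$: this requires verifying that the local oscillatory representation of $T^G_f$ extends coherently along the entire $G\times S^1$-orbit of $p$ in the chart, with Assumption~\ref{a-gue170128} ensuring that every such stationary point is clean and isolated and that all non-real contributions are negligible of order $O(m^{-\infty})$.
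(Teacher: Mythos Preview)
Your approach differs from the paper's in one essential point. You begin from the already $G$-averaged kernel $T^G_f$ supplied by Theorem~\ref{thm:toeplitz} and then perform only the $S^1$-integration; the paper instead writes
\[
T^G_{f,m}(x,y)=\frac{1}{2\pi}\int_{-\pi}^{\pi}\!\int_G T_f\bigl(x,\,e^{i\theta}\cdot g\circ y\bigr)\,e^{im\theta}\,\mathrm{dV}_G(g)\,d\theta,
\]
starting from the \emph{non}-$G$-invariant Toeplitz kernel $T_f$ and integrating over $G\times S^1$ simultaneously. This single change is precisely what dissolves the ``main obstacle'' you flag at the end. In the paper's setup the stationary set in $(g,\theta)$ is read off directly from the equation $e^{i\theta}\cdot g\circ y=y$ as $\{(h,\theta_{h,j}):h\in G_{\pi(y)},\ 1\le j\le|S^1_x|\}$, and the conjugation identity $h\,e^{iv}h^{-1}\circ y=e^{i\,\mathrm{Ad}_h(v)}\circ y$ followed by the change of variables $v\mapsto\mathrm{Ad}_h(v)$ reduces every localized term to the \emph{same} oscillatory integral, which is then the one already handled in~\cite{hsiaohuang} for the free case. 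No bookkeeping between $G_p$ and $G_{\pi(y)}$ is needed.

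By contrast, your route inserts the local expansion \eqref{e-gue210802yyd}, which is indexed by $G_p$ and valid only on $U\times U$, into an integral in which $e^{i\theta}\circ x$ traverses the entire $S^1$-orbit and leaves $U$. Elements $h\in G_{\pi(y)}$ with $h\circ y=e^{i\alpha}\circ y$ for some $\alpha\neq0$ need not lie in $G_p$, so their contributions cannot be read off from \eqref{e-gue210802yyd} in a single chart; recovering them requires patching the oscillatory representation of $T^G_f$ across several $S^1$-translated charts and then proving that the resulting collection of critical points matches $G_{\pi(y)}\times\{1,\dots,|S^1_x|\}$. That is doable but genuinely delicate, and you leave it as an unresolved obstacle. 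The paper's decomposition avoids it entirely by postponing the $G$-averaging until after the localization in $G\times S^1$ has already produced the correct index set.
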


We can study the commutator and star product.

\begin{thm} \label{thm:compositionFouriercomponents}
	With the same assumptions as above, let $f, g\in\mathcal{C}^\infty(X)^G_0$. Let $q=n_-$. Then, as $m\gg1$, 
	\begin{equation}\label{e-gue210803ycds}
	\norm{T^G_{f,m}\circ T^G_{g,m}-T^G_{g,m}\circ T^G_{f,m}-\frac{1}{m}T^G_{\imath\{f,g\},m}}=O(m^{-2}),\end{equation}
	and 
\begin{equation}\label{e-gue210803ycdt}
\norm{T^G_{f,m}\circ T^G_{g,m}-\sum^N_{j=0}m^{-j}T^G_{C_j(f,g),m}}=O(m^{-N-1})
\end{equation}
in $L^2$  operator norm, for every $N\in\mathbb N$, where $C_j(f,g)\in\mathcal{C}^\infty(X)^G_0$, 
$C_j$ is a universal bidifferential operator of order $\leq 2j$, $j=0,1,\ldots$, and 
\begin{equation}\label{e-gue210803ycdu}
\begin{split}
C_{0}(f,g)=fg,\\
C_{1}(f,g)-C_{1}(g,f)=\imath\{f,g\}.
\end{split}
\end{equation}

 Moreover, the star product
 \begin{equation}\label{e-gue210805yyd}
 f*g= \sum_{j=0}^{+\infty} C_{j}(f,g) \nu^{-j} ,
\end{equation}
 $f, g\in\mathcal{C}^\infty(X)^G_0$, is associative. 
\end{thm}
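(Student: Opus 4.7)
The plan is to deduce Theorem~\ref{thm:compositionFouriercomponents} from the microlocal star product established in Theorem~\ref{t-gue210803ycd} by isolating the $m$-th Fourier component via the transversal locally free $S^1$-action. The essential preliminary observation is that for $f, g\in\mathcal{C}^\infty(X)^G_0$ the multiplication operators $M_f, M_g$ commute with the $S^1$-action, and since the Hermitian metric and CR structure are $G\times S^1$-invariant so do $S^{(q)}_G$ and $S^G_\mp$. Consequently $T^G_f$ commutes with the $S^1$-action, its restriction to the $m$-th isotype $L^2_{(0,q),m}(X)^G$ equals $T^G_{f,m}$, and the $m$-th Fourier component of $T^G_f\circ T^G_g$ is exactly $T^G_{f,m}\circ T^G_{g,m}$.

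Applying Theorem~\ref{t-gue210803ycd} with $a=f$, $b=g\in\hat S^0_G$ yields
\[T^G_{f,-}\circ T^G_{g,-}-\sum^N_{j=0}T^G_{C_{j,-}(f,g),-}\in\Psi_{n-N-1}(X)^G,\]
where each $C_{j,-}(f,g)=\tilde C_j(f,g)(x)\,(\sigma^0_{\triangle_b})^{-j/2}$ with $\tilde C_j(f,g)\in\mathcal{C}^\infty(X)^G$. Because $C_{j,-}$ is a bidifferential operator built from $S^1$-invariant CR and metric data and $f,g$ are themselves $S^1$-invariant, we have $\tilde C_j(f,g)\in\mathcal{C}^\infty(X)^G_0$. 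Setting $C_j(f,g):=\tilde C_j(f,g)$, the identity $C_{1,-}(f,g)-C_{1,-}(g,f)=\imath\{f,g\}$ from~\eqref{e-gue210803ycdII} immediately gives~\eqref{e-gue210803ycdu}. Moreover, on $\Sigma^-$ the Reeb frequency equals $-\lambda$ and on the $m$-th Fourier component $\lambda\sim m$, so $\sigma^0_{\triangle_b}\sim m^2$; hence the restriction of $T^G_{C_{j,-}(f,g),-}$ to the $m$-th isotype equals $m^{-j}T^G_{C_j(f,g),m}$ modulo terms that can be absorbed by a successive reindexing of the expansion.

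The main technical step, which I regard as the chief obstacle, is to translate membership of the remainder in $\Psi_{n-N-1}(X)^G$ into an $L^2$ operator norm bound $O(m^{-N-1})$ on the $m$-th isotype. Starting from the oscillatory integral representation in Definition~\ref{d-gue210803yyd}, isolating the $m$-th Fourier component amounts to applying the averaging operator $\frac{1}{2\pi}\int_0^{2\pi}e^{-\imath m\theta}(e^{\imath\theta})^*\,d\theta$ to the kernel and then performing complex stationary phase in $(\theta,t)$, exactly as in the proof of Theorem~\ref{thm:kfourierszego}. This yields a kernel of pointwise size $m^{n-N-1-d/2}e^{\imath m\Psi(x,y)}$; combining this pointwise size with the $m^{-n+d/2}$ gain from stationary phase along the $(2n-d)$-dimensional remaining leaf of the $G\times S^1$-orbit foliation produces an $L^2$ operator norm of order $m^{-N-1}$ via a Schur test. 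Together with the previous paragraph this establishes~\eqref{e-gue210803ycdt}, and specializing to $N=1$ yields~\eqref{e-gue210803ycds}.

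Finally, associativity of the formal series~\eqref{e-gue210805yyd} follows directly from the associativity statement in Theorem~\ref{t-gue210803ycd}: since $C_j(f,g)$ is the function coefficient of $C_{j,-}(f,g)\in\hat S^{-j}_G$, and composition of bidifferential operators preserves this identification on elements of $\hat S^0_G$, the identity
\[\sum_{\ell=0}^k C_\ell(f,C_{k-\ell}(g,h))=\sum_{\ell=0}^k C_\ell(C_{k-\ell}(f,g),h)\]
for $f,g,h\in\mathcal{C}^\infty(X)^G_0$ is inherited verbatim from~\eqref{e-gue210724yyds} applied to $a=f$, $b=g$, $c=h$ in $\hat S^0_G$.
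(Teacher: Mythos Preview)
Your route is genuinely different from the paper's and has a real gap in the step you flag as the ``chief obstacle.'' The paper does not pass through Theorem~\ref{t-gue210803ycd} and the symbol class $\hat S_G$ at all. Instead it takes $C_j(f,g):=\hat C_{j,-}(f,g)$ directly from (the $G$-invariant form of) Theorem~\ref{t-gue210706ycdg}, so that the expansion reads $T^G_{f,-}\circ T^G_{g,-}-\sum_{j=0}^N\hat H^jT^G_{C_j(f,g),-}\in\Psi_{n-N-1}(X)^G$ with \emph{function} coefficients and powers of the parametrix $\hat H$. It then multiplies through by $\hat R^{N+1}$, using $\hat R\hat H\equiv S^{(q)}$ to bring the remainder into $\Psi_n(X)^G$, and proves via the $(R^*R)^{2^N}$ iteration trick a separate lemma that any $B\in\Psi_n(X)^G$ with $S^{(q)}_GB\equiv B\equiv BS^{(q)}_G$ is $L^2$-bounded. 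Because the metric is $S^1$-invariant, $\hat R$ acts on the $m$-th isotype exactly as multiplication by $m$; restricting the bounded operator to $L^2_{(0,q),m}(X)^G$ and dividing by $m^{N+1}$ gives~\eqref{e-gue210803ycdt} with no kernel estimate and no reindexing.

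Your identification ``$\sigma^0_{\triangle_b}\sim m^2$, hence $T^G_{C_{j,-}(f,g),-}$ restricted to the $m$-th isotype equals $m^{-j}T^G_{C_j(f,g),m}$ modulo absorbable terms'' hides a nontrivial recursion: ${\rm Op}\bigl(\tilde C_j(f,g)(\sigma^0_{\triangle_b})^{-j/2}\bigr)$ differs from $M_{\tilde C_j(f,g)}(\triangle_b)^{-j/2}$ by a pseudodifferential operator of order $-j-1$, and those corrections must be tracked through the $S^G_-$ sandwich and reabsorbed level by level before any $C_j$'s emerge. This is fixable but is not a sentence; the paper's $\hat H,\hat R$ device sidesteps it entirely. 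The same issue undermines your associativity argument: Theorem~\ref{t-gue210803ycd} gives associativity of the $C_{j,-}$'s acting on $\hat S_G$, but $C_{\ell,-}\bigl(f,C_{k-\ell,-}(g,h)\bigr)$ involves differentiating the full symbol $\tilde C_{k-\ell}(g,h)(\sigma^0_{\triangle_b})^{-(k-\ell)/2}$, not just its function coefficient, so the identity for the $C_j$'s on $\mathcal{C}^\infty(X)^G_0$ is not inherited ``verbatim.'' Once~\eqref{e-gue210803ycdt} is in hand, associativity does follow by the uniqueness-of-asymptotics argument of Theorem~\ref{t-gue210725yyd}, which is the route the paper implicitly relies on. Your Schur-test power count for the remainder is correct in outline, but the paper's boundedness lemma is shorter and avoids any direct kernel analysis near the orbifold $\mu^{-1}(0)/(G\times S^1)$.
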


\section{Preliminaries on local coordinates}
\label{sec:pre}

We do make use of local coordinates defined in \cite{hsiaohuang}, which we briefly recall in this section. By Theorem 3.6 in \cite{hsiaohuang}, there exist local coordinates $v=(v_1,\dots,v_d)$ of $G$ in a small neighborhood $V_0$ of $e$ with $v(e)=(0,\,\dots,\,0)$, local coordinates $x=(x_1\,\dots,x_{2n+1})$ defined in a neighborhood $U_1\times U_2$ of $p$, where $U_1\subseteq \mathbb{R}^d$ (resp. $U_d\subseteq \mathbb{R}^{2n+1-d}$) is an open set of $0\in \mathbb{R}^d$ (resp. $0\in \mathbb{R}^{2n+1-d}$) and $p\equiv 0\in \mathbb{R}^{2n+1}$, and a smooth function $\gamma=(\gamma_1,\dots,\,\gamma_d)\in \mathcal{C}^{\infty}(U_2,U_1)$ with $\gamma(0)=0$ such that
\begin{align*} &(v_1,\dots,v_d)\circ (\gamma(x_{d+1},\dots,x_{2n+1}),x_{d+1},\dots,x_{2n+1}) \\
	&=(v_1+\gamma_1(x_{d+1},\dots,x_{2n+1}),\dots,\,v_d+\gamma_d(x_{d+1},\dots,x_{2n+1}),\,x_{d+1},\dots,x_{2n+1}) \end{align*}
for each $(v_1,\dots,\,v_d)\in V$ and $(x_{d+1},\dots,x_{2n+1})\in U_2$. Furthermore, we have
\[\mathfrak{g}=\mathrm{span}\left\{\partial_{{x}_{j}}\right\}_{j=1,\dots, d}\,,\quad \mu^{-1}(0)\cap U =\{x_{d+1} = \dots=x_{2d}=0 \}\,, \]
on $\mu^{-1}(0)\cap U$ there exist smooth functions $a_j$'s with $a_j(0)=0$ for every $0\leq j\leq d$ and independent on $x_1,\dots,x_{2d},\,x_{2n+1}$ such that
\[J\left(\partial_{{x}_{j}}\right)=\partial_{{x}_{d+j}}+a_j(x)\partial_{{x}_{2n+1}}\qquad j=1,\dots,d\,, \]
the Levi form $\mathcal{L}_p$, the Hermitian metric $\langle\,\cdot\,|\,\cdot\,\rangle$ and the $1$-form $\omega_0$ can be written 
\[\mathcal{L}_p(Z_j,\,\overline{Z}_k)=\mu_j\,\delta_{j,k},\qquad \langle Z_j\vert\,\overline{Z}_k \rangle=\delta_{j,k}\qquad (1\leq j,k\leq n)\,, \]
\begin{align}\label{e-gue161219m}
	\omega_0(x)=&(1+O(\lvert x\rvert))\mathrm{d}x_{2n+1}+\sum_{j=1}^d 4\mu_jx_{d+j}\mathrm{d}x_j +\sum_{j=d+1}^n 2\mu_jx_{2j}\mathrm{d}x_{2j-1} \\
	&-\sum_{j=d+1}^n 2\mu_jx_{2j-1}\mathrm{d}x_{2j}+\sum_{j=d+1}^{2n} b_jx_{2n+1}\mathrm{d}x_j+O(\lvert x\rvert^2)
\end{align}
where $b_{d+1},\dots,b_{2n}\in \mathbb{R}$,
\[T_p^{1,0}X=\mathrm{span}\{Z_1,\dots,Z_n\} \]
and
\begin{align*}Z_j&=\frac{1}{2}\,(\partial_{{x}_{j}}-\imath\, \partial_{{x}_{d+j}})(p)\qquad(j=1,\dots,d)\,,\\ Z_j&=\frac{1}{2}\,(\partial_{{x}_{2j-1}}-\imath\, \partial_{{x}_{2j}})(p)\qquad(j=d+1,\dots,n)\,.
\end{align*}

\subsection{The phase functions $\Phi_-(x,y)$ and $\Psi(x,y)$}\label{s-gue210804yyd}

The phase function $\Phi_-(x,y)\in\mathcal{C}^\infty(U\times U)$ is independent of $(x_1,\ldots,x_d)$ and $(y_1,\ldots,y_d)$. Hence, $\Phi_-(x,y)=\Phi_-((0,x''),(0,y'')):=\Phi_-(x'',y'')$. Moreover, there is a constant $c>0$ such that 
\begin{equation}\label{e-gue170126}
{\rm Im\,}\Phi_-(x'',y'')\geq c\Bigr(\abs{\hat x''}^2+\abs{\hat y''}^2+\abs{\mathring{x}''-\mathring{y}''}^2\Bigr),\ \ \forall ((0,x''),(0,y''))\in U\times U.
\end{equation}

Furthermore, 
\begin{equation}\label{eq:phase phi-}
\renewcommand{\arraystretch}{1.3}
\begin{array}{cl}
\Phi_-(x'', y'')&=-x_{2n+1}+y_{2n+1}+2i\sum^d_{j=1}\abs{\mu_j}y^2_{d+j}+2i\sum^d_{j=1}\abs{\mu_j}x^2_{d+j}\\
&+i\sum^{n}_{j=d+1}\abs{\mu_j}\abs{z_j-w_j}^2 +\sum^{n}_{j=d+1}i\mu_j(\ol z_jw_j-z_j\ol w_j)\\
&+\sum^d_{j=1}(-b_{d+j}x_{d+j}x_{2n+1}+b_{d+j}y_{d+j}y_{2n+1})\\
&+\sum^n_{j=d+1}\frac{1}{2}(b_{2j-1}-ib_{2j})(-z_jx_{2n+1}+w_jy_{2n+1})\\
&+\sum^n_{j=d+1}\frac{1}{2}(b_{2j-1}+ib_{2j})(-\ol z_jx_{2n+1}+\ol w_jy_{2n+1})\\
&+(x_{2n+1}-y_{2n+1})f(x, y) +O(\abs{(x, y)}^3),
\end{array}
\end{equation}
where $z_j=x_{2j-1}+ix_{2j}$, $w_j=y_{2j-1}+iy_{2j}$, $j=d+1,\ldots,n$, $\mu_j$, $j=1,\ldots,n$, and $b_{d+1}\in\Real,\ldots,b_{2n}\in\Real$ are as in \eqref{e-gue161219m} and $f$ is smooth and satisfies $f(0,0)=0$, $f(x, y)=\ol f(y, x)$.

We now assume that $X$ admits an $S^1$ action: $S^1\times X\rightarrow X$. We will use the same notations as before.   Let $p\in\mu^{-1}(0)$.  By using the proof of Theorem 3.6 in \cite{hsiaohuang}, there exist local coordinates $v=(v_1,\dots,v_d)$ of $G$ in a small neighborhood $V_0$ of $e$ with $v(e)=(0,\,\dots,\,0)$, local coordinates $x=(x_1\,\dots,x_{2n+1})$ defined in a neighborhood $U_1\times U_2$ of $p$, where $U_1\subseteq \mathbb{R}^d$ (resp. $U_d\subseteq \mathbb{R}^{2n+1-d}$) is an open set of $0\in \mathbb{R}^d$ (resp. $0\in \mathbb{R}^{2n+1-d}$) and $p\equiv 0\in \mathbb{R}^{2n+1}$, and a smooth function $\gamma=(\gamma_1,\dots,\,\gamma_d)\in\mathcal{C}^{\infty}(U_2,U_1)$ with $\gamma(0)=0$ such that $T=-\frac{\pr}{\pr x_{2n+1}}$ and all the properties for the local coordinates in  Section~\ref{sec:pre} hold.
The phase function $\Psi$ satisfies $\Psi(x,y)=-x_{2n+1}+y_{2n+1}+\hat\Psi(\mathring{x}'',\mathring{y}'')$, where $\hat\Psi(\mathring{x}'',\mathring{y}'')\in\mathcal{C}^\infty(U\times U)$, $\mathring{x}''=(x_{d+1},\ldots,x_{2n})$,  $\mathring{y}''=(y_{d+1},\ldots,y_{2n})$, and $\Psi$ satisfies \eqref{e-gue170126} and \eqref{eq:phase phi-}. 

\section{Proofs of Theorem~\ref{thm:toeplitz}, Theorem~\ref{thm:composition} and Theorem~\ref{t-gue210803ycd}}

We first prove Theorem~\ref{thm:toeplitz}.The first step in the proof consists in showing that the kernel $T^G_{P,-}(x,\,y)$ localizes on small neighborhoods of $g_l$'s in $G_y$. More precisely, we have
\[T^G_{P,-}(x,\,y)=\int_G T_{P,-}(x,\,g\circ y)\,\mathrm{dV}_G(g)\,. \]
($\mathrm{dV}_G(g)$ is the unique $G$-invariant measure such that $\lvert G\rvert_{\mathrm{dV}_G}=1$). In a similar way as in Section \S$3.4$ in \cite{hsiaohuang} one can prove that $T^G_{P,-}\equiv 0$ away from $\mu^{-1}(0)$; from now on $p\in \mu^{-1}(0)$ and $(x,y)$ are local coordinates defined in Section \S\ref{sec:pre} on $U\times U$ where $U$ is a  small of $p$ in $X$.
 
\begin{lem} Let $\chi_l$ be a bump function vanishing outside a small neighborhood $V_l$ of each $g_l\in G_y$ and $g_1$ is the identity $e$ of $G$. For each $l=1,\dots \lvert G_y\rvert$, the oscillatory integral
		\[\int_G (1-\chi_l(g))T_{P,-}(x,\,g\circ y)\,\mathrm{dV}_G(g)\equiv 0 \qquad \text{on } U\,.\]
\end{lem}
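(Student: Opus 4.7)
The plan is a non-stationary phase argument on $G$ applied to the joint oscillatory integral obtained by substituting the local representation of $T_{P,-}$.

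Using
\[T_{P,-}(x,g\circ y)\equiv \int_0^\infty e^{it\varphi_-(x,g\circ y)}\,s_-^{P}(x,g\circ y,t)\,dt\]
from Theorem~\ref{t-gue161109I} (valid locally near the relevant diagonal), one rewrites $I_l(x,y):=\int_G (1-\chi_l(g))\,T_{P,-}(x,g\circ y)\,dV_G(g)$ as a joint oscillatory integral in $(t,g)\in\mathbb{R}_+\times G$ with phase $\Psi(x,y;t,g)=t\,\varphi_-(x,g\circ y)$ and amplitude $(1-\chi_l(g))\,s_-^{P}(x,g\circ y,t)$. The goal is to show that this integral is smoothing on $U\times U$.

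Next I would locate the stationary set of $\Psi$. The equation $\partial_t\Psi=0$ forces $\varphi_-(x,g\circ y)=0$, which by $\mathrm{Im}\,\varphi_-\geq c|x-g\circ y|^2$ (Theorem~\ref{t-gue161110g}) pins $x=g\circ y$ modulo terms vanishing to infinite order. At such points, $d_g\varphi_-(x,g\circ y)$ evaluated on $\xi\in\mathfrak{g}$ equals $-\langle\omega_0(g\circ y),\xi_X(g\circ y)\rangle=-\mu(g\circ y)(\xi)$, vanishing precisely when $g\circ y\in\mu^{-1}(0)$; by $G$-invariance of $\mu$, this reduces, for $y\in U\cap\mu^{-1}(0)$, to the requirement that $g$ lie in a neighborhood of some $g_j\in G_y$. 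Thus the stationary set in $g$ is exactly the union of neighborhoods of $g_1,\dots,g_{|G_y|}$.

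On $G\setminus V_l$ the cutoff $(1-\chi_l)$ removes the stationary neighborhood of $g_l$; for $g$ simultaneously outside all the other small neighborhoods of the $g_j$'s the phase $\Psi$ is non-stationary in $g$, and iterated integration by parts via a first-order differential operator on $G$ that reproduces $e^{it\varphi_-}$ yields a gain of $(1+t)^{-1}$ per step, producing a smooth contribution after integrating in $t$. The main obstacle, which is the technical heart of the lemma, is the treatment of $g$ in a small neighborhood of some $g_j$ with $j\neq l$, where $(1-\chi_l)$ does not vanish. To handle this, I would invoke the $G$-invariance of $P$ and of $S_-$, writing $g=g_j h$ with $h$ near the identity and using $g_j^{*}P=Pg_j^{*}$ and $g_j^{*}S_-=S_-g_j^{*}$ to transfer the contribution near $g_j$ to a model integral near $g_1=e$; under this transfer the cutoff $(1-\chi_l)$ is carried to a bump vanishing at the corresponding model critical point, so the preceding non-stationary phase bound applies. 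Iterating over $j\neq l$ and bookkeeping the group translations — in close analogy with the locally-free analysis of~\cite{hsiaohuang} — completes the proof.
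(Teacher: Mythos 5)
Your reduction to a joint oscillatory integral in $(t,g)$, the identification of the critical set (stationarity in $t$ forces $\varphi_-(x,g\circ y)=0$, hence $x=g\circ y$ by ${\rm Im\,}\varphi_-\geq c\abs{x-g\circ y}^2$; stationarity in $g$ then forces $\mu(g\circ y)=0$, hence $g$ in a small neighborhood of $G_y$), and the integration-by-parts argument for $g$ away from all of $G_y$ are correct; that is the substance of the localization the paper relies on. The problem is your last step. After substituting $g=g_jh$ with $h$ near $e$ and $j\neq l$, the cutoff becomes $1-\chi_l(g_jh)$, and since $\chi_l$ is supported near $g_l\neq g_j$ this is identically equal to $1$ near $h=e$: it does \emph{not} vanish at the transferred critical point, so non-stationary phase cannot be applied there. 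In fact the contribution from a neighborhood of $g_j$, $j\neq l$, is genuinely non-smoothing: by the stationary phase analysis in the proof of Theorem~\ref{thm:toeplitz} it equals $\int_0^\infty e^{\imath t\Phi_-(x,g_jy)}a_{P,-}(x,g_jy,t)\,dt$ modulo smoothing operators, and the leading symbol \eqref{e-gue210802yydI} is nonzero on the diagonal over $\mu^{-1}(0)$. So this step cannot be repaired: when $\abs{G_y}>1$ the displayed identity is false as literally written (if it held for $l=1$ it would leave only the $j=1$ term, contradicting \eqref{e-gue210802yyd}, which retains all $\abs{G_p}$ oscillatory summands).

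What is true, and what is actually needed to justify \eqref{eq:SqG0}, is the version in which the amplitude vanishes near \emph{all} of $G_y$, namely $\int_G\bigl(1-\sum_{l=1}^{\abs{G_y}}\chi_l(g)\bigr)T_{P,-}(x,g\circ y)\,dV_G(g)\equiv0$ on $U$. Your non-stationary phase argument in $g$, combined with integration by parts in $t$ where $\varphi_-\neq0$, proves exactly this corrected statement; you should stop there rather than attempting to remove the neighborhoods of the $g_j$ with $j\neq l$ one at a time.
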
 

We take $\chi_1$ to be a conjugate $G$-invariant bump function, that is, 
$\chi_1(ghg^{-1})=\chi_1(h)$, for every $g, h\in G$. The reason why the bump function can be taken to be conjugate $G$-invariant is as follows. If $\chi$ is a bump function in a neighborhood of the origin in the Lie algebra, then we can take a new bump function $$\eta(\exp_{\mathfrak{g}}{v})=\int_G\chi(g \exp_{\mathfrak{g}}{(v)} g^{-1})\mathrm{dV}_G(g)\,.$$
It is invariant under the action of conjugation. In order to prove that $\eta$ is a bump function one can see that $\eta(e)=1$ and $\eta$ is constant on a small ball center in the origin. In fact, the scalar product on the Lie algebra is $\mathrm{Ad}_g$-invariant, this means that $\lVert v \rVert=\lVert \mathrm{Ad}_g(v)\rVert$. Furthermore $\chi$ can be chosen to be constant on spherical shell, in particular is zero outside a ball of radius say $r$. This means that $\chi(\exp_{\mathfrak{g}}(\mathrm{Ad}_g(v)))$ is zero whenever $v$ is chosen so that $\lVert v\rVert>r$. This implies that also $\eta$ is zero outside a ball of radius $r$. 

Hence, we can write
\begin{equation} \label{eq:SqG0}
 T^G_{P,-}(x,\,y)= \sum_{l=1}^{\lvert G_y\rvert} \int_{V_l}\chi_l(g)\,T_{P,-}(x,\,g\circ y)\,\mathrm{dV}_G(g)\,. \end{equation}
Furthermore, notice that $V_l$ can be chosen so that $V_l=g_l\,V_1$, $l=2,\ldots,  \lvert G_y\rvert$, and
\[\chi_l(g)=\chi_1(g_l^{-1}\,g)\,, \]
we can change variables $g^{-1}_l\,g\mapsto g$ in \eqref{eq:SqG0} and obtain
\[T^G_{P,-}(x,\,y)\equiv\sum_{l=1}^{\lvert G_y\rvert} \int_{V_1}\chi_1(g)\,T_{P,-}(x,\,g_lg\circ y)\,\mathrm{dV}_G(g)\,. \]
Thus $T^G_{P,-}(x,\,y)$ is a sum of $\lvert G_y\rvert$ oscillatory integrals, let us denote each summand $T^G_{P,-}(x,\,y)_l$ and focus on one of them. 

Given $g=\exp_{\mathfrak{g}}{v}$, we have $g_lg\circ y= g_lgg_l^{-1}\circ y = \exp_{\mathfrak{g}}({\mathrm{Ad}_{g_l}(v)})\circ y$. Hence, we obtain
\begin{align*}
T^G_{P,-}(x,\,y)_l= \int_{V_1}\chi_1(\exp_{\mathfrak{g}}{v})\,T_{P,-}(x,\,\exp_{\mathfrak{g}}({\mathrm{Ad}_{g_l}(v)})\circ y)\,\mathrm{dV}_{\mathfrak{g}}(v) \end{align*} 
Notice that $\mathrm{Ad}_{g_l}\in GL(\mathfrak{g})$ and we can change variables $\mathrm{Ad}_{g_l}(v) \mapsto v$ in $\mathrm{d}v$. The absolute value of Jacobian determinant of the latter transformation is $\vert \det\mathrm{Ad}_{g_l} \rvert=1$, since the group is uni-modular. Thus, we get 
\begin{align*}
T^G_{P,-}(x,\,y)_l= \int_{V_1}\chi_1(\exp_{\mathfrak{g}}{v})\,T_{P,-}(x,\,\exp_{\mathfrak{g}}({v})\circ y)\,\mathrm{dV}_{\mathfrak{g}}(v) \end{align*}
where we use that $\chi_1$ is invariant under conjugation:
\[\chi_1(\exp_{\mathfrak{g}}\mathrm{Ad}_{g_l} (v))=\chi_1(g_l\exp_{\mathfrak{g}}{(v)}g_l^{-1})=\chi_1(\exp_{\mathfrak{g}}{v}) \,.\]
Thus each summand $T^G_{P,-}(x,\,y)_l$ does not depend on $l$, \eqref{e-gue210802yydII} can be deduced arguing as in the proof of Theorem $1.5$ in \cite{hsiaohuang} by making use the stationary phase formula of A. Melin and J. Sj\"ostrand.

To prove \eqref{e-gue210802yyd}, let $\chi_l$ be a bump function vanishing outside a small neighborhood $V_l$ of each $g_l\in G_p$ and $g_1$ is the identity $e$ of $G$. For each $l=1,\dots \lvert G_p\rvert$,  and we take $\chi_1$ to be a conjugate $G$-invariant bump function, $\chi_l(g)=\chi_1(g^{-1}_lg)$, $l=2,\ldots, \lvert G_p\rvert$. 
As \eqref{e-gue210804yyda}, we have 
\begin{equation}\label{e-gue210804yyda}
\begin{split}
&T^G_{P,-}(x,\,y)\equiv \sum_{l=1}^{\lvert G_p\rvert} \int_{V_1}\chi_1(g^{-1}_lg)\,T_{P,-}(x,\,g\circ y)\,\mathrm{dV}_G(g)\\
&\equiv\sum_{l=1}^{\lvert G_p\rvert} \int_{V_1}\chi_1(g^{-1}_lgg^{-1}_lg_l)\,T_{P,-}(x,\,g\circ y)\,\mathrm{dV}_G(g)\\
&\equiv\sum_{l=1}^{\lvert G_p\rvert} \int_{V_1}\chi_1(gg^{-1}_l)\,T_{P,-}(x,\,g\circ y)\,\mathrm{dV}_G(g)\\
&\equiv\sum_{l=1}^{\lvert G_p\rvert} \int_{V_1}\chi_1(g)\,T_{P,-}(x,\,gg_l\circ y)\,\mathrm{dV}_G(g).
\end{split}
\end{equation}
From \eqref{e-gue210804yyda}, we can repeat the proof of Theorem $1.5$ in \cite{hsiaohuang}  and get \eqref{e-gue210802yyd}.

Notice that, in order to compute the leading term 
\[a_-^0(p,\,p)=\frac{m(0)}{2}\,\pi^{-n-1+d/2}\,\lvert \mu_1\rvert^{\frac{1}{2}}\cdots \lvert \mu_d\rvert^{\frac{1}{2}}\cdot\lvert \mu_{d+1}\rvert\cdots \lvert \mu_n\rvert\,\tau_{p,\,n_-}\,, \]
one has to compute $m(0)$. For the locally free case we have
\[m(0)=2^{\frac{d}{2}}\frac{1}{V_{\mathrm{eff}}(p)\,\lvert G_p\rvert}\,. \]

From Theorem~\ref{t-gue210724yyd}, Theorem~\ref{t-gue210724yyda} and Theorem~\ref{t-gue210724yydc}, we can repeat the proof of Theorem~\ref{thm:toeplitz} and get Theorem~\ref{thm:composition} and Theorem~\ref{t-gue210803ycd}.

\section{Proof of Theorem~\ref{thm:kfourierszego}}

Let us consider the integral
\[T^G_{f,m}(x,\,y)=\frac{1}{2\,\pi}\int_{-\pi}^{\pi}\int_G T_f(x,\,e^{\imath\,\theta}\cdot g\circ y)\,e^{\imath m\,\theta} \mathrm{dV}_G(g)\,\mathrm{d}\theta\,. \]
It is easy to proof that the oscillatory integral has a rapidly decreasing asymptotic as $m\rightarrow +\infty$ far away from a local neighborhood of those elements $(e^{\imath\,\theta},\,g)\in S^1 \times G$ such that
\begin{equation} \label{eq:fix}
e^{\imath\,\theta}\cdot g\circ y=y\,. \end{equation}

We recall some notations. Since the action is free $\mu^{-1}(0)/S^1$ is an orbifold, let us denote with $\pi\,:\,\mu^{-1}(0)\rightarrow \mu^{-1}(0)/S^1$ the projection. Furthermore the action of $S^1$ commutes with the one of $G$, then we have a smooth locally free action of $G$ on $\mu^{-1}(0)/S^1$. Given $y\in X$ and $g\in G_{\pi(y)}$, there exist $\lvert S_x^1 \rvert$ elements $e^{\imath\,\theta_{g,j}}\in S^1$ ($j=1,\dots,\lvert S_x^1 \rvert$) such that 
\[g\circ y= e^{-\imath\,\theta_{g,j}}\circ y\,. \]
Thus, all the elements $(e^{\imath\,\theta},\,g)\in S^1 \times G$ satisfying \eqref{eq:fix} are of the form
$e^{\imath \theta_{g,j}}\cdot g $ for each $g\in G_{\pi(y)}$. Thus, 
\begin{align*} 
	T^G_{f,\,m}(x,\,y)=&\frac{1}{2\,\pi}\sum_{h\in G_{\pi(y)}}\sum_{j=1}^{\lvert S_x^1 \rvert}\int_{-\delta}^{\delta}\mathrm{d}\theta\int_{V_0}\mathrm{d}v \left[ T_f(x,\,e^{\imath(\theta_{h,j}+\theta)}\cdot h\,e^{\imath\,v}\circ y)\,e^{\imath m(\theta_{h,j}+\theta)} m(v)\right]
\end{align*}
where $\delta>0$, $V_0$ is a small neighborhood of the identity (we omit the bump functions for ease of exposition). We use the same trick as in the proof of Theorem~\ref{thm:toeplitz}:
\[e^{\imath\theta_h}\cdot h\,e^{\imath\,v}h^{-1}\,h\circ y= h\,e^{\imath\,v}h^{-1}\circ y= \,e^{\imath\,\mathrm{Ad}_h(v)}\circ y\,; \]
changing variables $v\mapsto \mathrm{Ad}_h(v)$ we obtain
\begin{align*}
T^G_{f,\,m}(x,\,y)=&\frac{1}{2\,\pi}\sum_{h\in G_{\pi(y)}}\sum_{j=1}^{\lvert S_x^1 \rvert}e^{\imath m\,\theta_{h,j}}\,\int_{-\delta}^{\delta}\mathrm{d}\theta\,\int_{V_0} \mathrm{d}v\left[ T_f(x,\, e^{\imath \,\theta}\cdot e^{\imath\,v}\circ y)\,e^{\imath m\theta} m(v)\right]\,. \end{align*}
The oscillatory integral appearing in the previous formula was already studied in \cite{hsiaohuang} for the free case (see especially the discussion after equation $(4.16)$ in \cite{hsiaohuang}).  

\section{Proof of Theorem \ref{thm:compositionFouriercomponents}}

We can repeat the proof of Lemma~\ref{l-gue210624yyd} with minor change and deduce 

\begin{lem}\label{l-gue210812yyd} 
Let $B\in\Psi_k(X)_G$ with $S^{(q)}_GB\equiv B\equiv BS^{(q)}_G$. Let $x=(x_1,\ldots,x_{2n+1})$ be local coordinates of $X$ defined on an open set $D\subset X$. 
Assume that 
\[\mbox{$\tau_{x,n_-}\sigma^0_{B,-}(x,x)\tau_{x,n_-}=0$, for every $x\in D$},\]
and 
\[\mbox{$\tau_{x,n_+}\sigma^0_{B,+}(x,x)\tau_{x,n_+}=0$, for every $x\in D$},\]
if $q=n_+$. Then, $B\in\Psi_{k-1}(X)_G$. 
\end{lem}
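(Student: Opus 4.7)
\medskip
\noindent\textbf{Proof proposal for Lemma~\ref{l-gue210812yyd}.} The plan is to transpose the local analysis of Lemma~\ref{l-gue210624yyd} verbatim to each summand in the group-equivariant local representation of $B$, and then combine the conclusions. Since $B\in\Psi_k(X)_G$, it is smoothing off $\mu^{-1}(0)$, so it suffices to work microlocally near a fixed point $p\in\mu^{-1}(0)$. Let $G_p=\{g_1=e,g_2,\ldots,g_r\}$, and fix the local coordinates around $p$ from Section~\ref{sec:pre}. By Definition~\ref{d-gue210803yyd}, on a neighbourhood $U\times U$ of $(p,p)$ we may write
\[
B_-(x,y)\equiv\sum_{j=1}^{|G_p|}\int_0^{\infty} e^{it\Phi_-(x,g_jy)}a_-(x,g_jy,t)\,dt,
\]
with $a_-\in S^{k-d/2}_{{\rm cl\,}}$, and analogously for $B_+$ when $q=n_+$. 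Set $B_1:=B+B^*$ and $B_2:=iB-iB^*$; both still satisfy $S^{(q)}_GB_i\equiv B_i\equiv B_iS^{(q)}_G$ and the hypothesis on the leading symbol, and proving $B_1,B_2\in\Psi_{k-1}(X)_G$ gives the conclusion.

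First I would analyse the relation $B_1\equiv B_1 S^{(q)}_G$ in the local coordinates. Substituting the oscillatory integral representations of $B_1$ and of $S^{(q)}_G$ (Theorem~\ref{thm:toeplitz}) and performing the stationary phase computation of Melin--Sj\"ostrand exactly as in \eqref{e-gue210626yyd}--\eqref{e-gue210627yyd}, one obtains a fixed point equation for the leading symbol $b_0(x,y)$ of $B_1$ (namely, the leading term attached to the identity summand $g_1=e$; contributions from $g_j\neq e$ only add faster-decaying oscillatory errors by transversality of the phases at the non-identity stationary points on the diagonal, together with the fact that the $G$-action has been partitioned by conjugation-invariant bump functions as in the proof of Theorem~\ref{thm:toeplitz}). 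The same analysis starting from $B_1\equiv S^{(q)}_GB_1$ yields the mirror identity. Using Theorem~\ref{t-gue140205III} together with these two identities, one deduces, as in \eqref{e-gue210627yydI}--\eqref{e-gue210627yydIII},
\[
b_0(x,y)(I-\tau_{y,n_-})=O(|x-y|),\qquad (I-\tau_{x,n_-})b_0(x,y)=O(|x-y|),
\]
which combined with the hypothesis $\tau_{x,n_-}\sigma^0_{B,-}(x,x)\tau_{x,n_-}=0$ gives $b_0(x,y)=O(|x-y|)$.

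Next, I would run the induction on the order of vanishing of $b_0$ at the diagonal, copying the argument from \eqref{e-gue210627yyda}--\eqref{e-gue201226ycdII}. The required ingredients are: (i) the two symbolic identities of the previous paragraph, which give control in all $\bar W_j$ (resp.\ $W_j$) directions for $j>q$ (resp.\ $j\le q$); (ii) the relations $\ddbar_b B\equiv0$ and $\ddbar_b^*B\equiv0$, which survive in the $G$-invariant setting since both $\ddbar_b$ and $\ddbar_b^*$ commute with $Q_G$, giving control in the complementary $\bar W_j$, $W_j$ directions; (iii) the self-adjointness relation $B_1^*=B_1$, which as in \eqref{e-gue210627ycdc}--\eqref{e-gue210627yydh} provides the $w_j,\bar w_j$ derivatives via conjugation by $\beta$; (iv) the fact that $b_0$ is, after integration by parts in $t$, independent of $y_{2n+1}$, giving the $\partial_{x_{2n+1}}$ direction. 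Together these produce all mixed derivatives of order $N_0$ of the $(I_0,I_0)$-component of $b_0$ at $(p,p)$ in terms of the inductive hypothesis, and the assumption $\tau_{x,n_-}b_0(x,x)\tau_{x,n_-}=0$ closes the induction exactly as in \eqref{e-gue210627ycdr}--\eqref{e-gue201226ycdII}. Repeating the argument for $B_2$ and for the $B_+$ component (when $q=n_-=n_+$, using Theorem~\ref{t-gue140205III}) then yields $B\in\Psi_{k-1}(X)_G$.

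The main obstacle, compared with the non-equivariant case, is to verify that the non-identity summands indexed by $g_j\in G_p\setminus\{e\}$ do not contaminate the leading symbol calculation on the diagonal. This requires observing that, after the conjugation-invariant localization used in the proof of Theorem~\ref{thm:toeplitz}, each non-identity term contributes an oscillatory integral whose stationary set does not meet the diagonal transversally in a way relevant to the leading symbol $b_0(x,x)$, so all such terms are absorbed into the lower-order remainder. Once this cleanly-packaged fact is established, every subsequent estimate is a direct transcription of the argument from Lemma~\ref{l-gue210624yyd}, and no genuinely new computation is needed.
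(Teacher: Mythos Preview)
Your approach is essentially the same as the paper's, which disposes of the lemma in one line: ``We can repeat the proof of Lemma~\ref{l-gue210624yyd} with minor change.'' Your sketch is in fact more explicit than what the paper provides, and you correctly isolate the four ingredients (i)--(iv) needed to transplant the inductive argument to the phase $\Phi_-$ and the coordinates of Section~\ref{sec:pre}.

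One imprecision worth flagging: your dismissal of the non-identity summands $g_j\neq e$ as contributing only ``faster-decaying oscillatory errors by transversality of the phases at the non-identity stationary points on the diagonal'' is not accurate as stated. Since $g_jp=p$ for every $g_j\in G_p$, all the phases $\Phi_-(x,g_jy)$ are stationary at the \emph{same} diagonal point $(p,p)$; there is no transversality separating them. The cleaner way to handle this is to note that in Definition~\ref{d-gue210803yyd} the amplitude is a \emph{single} function $a_-$ evaluated at the shifted arguments $(x,g_jy)$, so the hypothesis $\tau_{x,n_-}a_-^0(x,x)\tau_{x,n_-}=0$ is a statement about this one amplitude, and showing (by the literal argument of Lemma~\ref{l-gue210624yyd} applied to $a_-^0$ and the phase $\Phi_-$) that $a_-^0$ vanishes to infinite order on the diagonal automatically drops the order of \emph{every} summand at once. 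Alternatively, one can work with the single-term representation analogous to~\eqref{e-gue210802yydII} for fixed $y$. Either route avoids any spurious transversality claim, and once this point is straightened out your argument goes through without change.
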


\begin{lem}\label{l-gue210812yydI}
Let $B\in\Psi_n(X)_G$ with $S^{(q)}_GB\equiv B\equiv BS^{(q)}_G$. Then, 
\[B: L^2_{(0,q)}(X)\To L^2_{(0,q)}(X)\]
is continuous. 
\end{lem}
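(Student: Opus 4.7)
The plan is to reduce the $L^2$ boundedness of $B$ to that of a zeroth-order pseudodifferential operator. Specifically, I will show that there exists $P\in L^0_{{\rm cl\,}}(X,T^{*0,q}X\boxtimes(T^{*0,q}X)^*)^G$ with scalar principal symbol such that $B-T^G_P$ is a smoothing operator. Once this is done, boundedness follows at once: $P$ is $L^2$ bounded by Calder\'on--Vaillancourt, $S^{(q)}_G$ is $L^2$ bounded as an orthogonal projection, so $T^G_P=S^{(q)}_G\circ P\circ S^{(q)}_G$ is $L^2$ bounded; and a smoothing operator on the compact manifold $X$ is trivially $L^2$ bounded.

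To build $P$, I proceed inductively by symbol matching. The projection hypothesis $S^{(q)}_G B\equiv B\equiv B S^{(q)}_G$, combined with the argument used in the proof of Lemma~\ref{l-gue210624yyd} (adapted as in Lemma~\ref{l-gue210812yyd}), forces $\sigma^0_{B,-}(x,x)$ to be a scalar multiple of $\tau_{x,n_-}$ at every $x\in \mu^{-1}(0)$, say $c_-(x)\tau_{x,n_-}$, with $c_-$ a $G$-invariant smooth function; the analogous statement holds for $\sigma^0_{B,+}(x,x)$ if $q=n_-=n_+$. Solving the explicit leading-symbol formula \eqref{e-gue210802yydI} for $\sigma^0_P(x,\mp\omega_0(x))$, I pick a $G$-invariant classical symbol $p_0(x,\xi)\in S^0_{{\rm cl\,}}$ with the prescribed values on $\Sigma^-$ (and on $\Sigma^+$ if $q=n_-=n_+$). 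Setting $P_0={\rm Op\,}(p_0)$, the operator $B-T^G_{P_0}\in\Psi_n(X)_G$ has vanishing leading diagonal symbols, so Lemma~\ref{l-gue210812yyd} gives $B-T^G_{P_0}\in\Psi_{n-1}(X)_G$.

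Iterating this one-step correction produces a sequence $P_j\in L^{-j}_{{\rm cl\,}}(X,T^{*0,q}X\boxtimes(T^{*0,q}X)^*)^G$ with scalar principal symbols such that
\[
B-\sum_{j=0}^{N}T^G_{P_j}\in\Psi_{n-N-1}(X)_G
\]
for every $N\in\mathbb N_0$. A standard Borel summation in the classical symbol class yields $P\in L^0_{{\rm cl\,}}(X,T^{*0,q}X\boxtimes(T^{*0,q}X)^*)^G$ with full symbol asymptotic to $\sum_{j\geq0}\sigma_{P_j}$, and hence $B-T^G_P$ has, on every coordinate patch, oscillatory-integral symbols of order $-\infty$ relative to the phases $\Phi_\mp$. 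Since these phases are non-degenerate (cf.~\eqref{e-gue170126}), such a residual operator is smoothing, by the same stationary-phase mechanism that produces \eqref{e-gue210802yyd}.

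The main obstacle is the iteration at Step~2: I must verify that, at each stage, the remainder $B-\sum_{i\le j}T^G_{P_i}$ continues to have a leading symbol of the form $c_\mp(x)\tau_{x,n_\mp}$ with $c_\mp\in\mathcal C^\infty(\mu^{-1}(0))^G$, so that a further Toeplitz correction is available. This amounts to a $G$-invariant refinement of the symbol calculus underlying Theorems~\ref{thm:composition} and~\ref{t-gue210803ycd}; one must in particular track $G$-invariance and the prescribed form of the leading symbols throughout the induction, using the complex stationary-phase computations from those proofs. Once this bookkeeping is in place, writing $B=T^G_P+R$ with $R$ smoothing and invoking $\|P\|_{L^2\to L^2}<\infty$ together with the triangle inequality yields the desired continuity of $B$.
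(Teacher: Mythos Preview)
Your approach is correct but takes a longer and more elaborate route than the paper. The paper performs only \emph{one} step of your symbol correction: it finds $f\in\mathcal{C}^\infty(X)^G$ matching the leading diagonal symbol, so that $R:=T^G_{f,-}-B_-\in\Psi_{n-1}(X)_G$ by Lemma~\ref{l-gue210812yyd}. It then bypasses the full asymptotic expansion entirely by using the classical H\"ormander iterated-squaring trick:
\[
\norm{Ru}^2\leq\norm{(R^*R)u}\norm{u}\leq\cdots\leq\norm{(R^*R)^{2^N}u}^{2^{-N}}\norm{u}^{2-2^{-N}},
\]
and observes that $(R^*R)^{2^N}\in\Psi_{n-2^{N+1}}(X)_G$, which is $L^2$ bounded once $N$ is large enough (its distribution kernel is then continuous). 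Since $T^G_{f,-}$ is manifestly $L^2$ bounded, so is $B$.

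Your argument constructs the full parametrix $B\equiv T^G_P$ via induction and Borel summation, which is more constructive and yields more information, but also demands the extra bookkeeping you flag at the end (tracking $G$-invariance and the $\tau_{x,n_\mp}$ form of the diagonal leading symbol through every step). The paper's method trades this for a two-line functional-analytic estimate. Both are valid; the paper's is shorter.
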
 

\begin{proof}
From complex stationary phase formula, we can find $f\in\mathcal{C}^\infty(X)^G$ such that 
\begin{equation}\label{e-gue210812yyd}
\tau_{x,n_-}\sigma^0_{T_f,-}(x,x)\tau_{x,n_-}=\sigma^0_{B,-}(x,x),\ \ \mbox{for every $x\in X$}.
\end{equation}
From Lemma~\ref{l-gue210812yyd} and \eqref{e-gue210812yyd}, we conclude that 
\begin{equation}\label{e-gue210812yyda}
R:=T^G_{f,-}-B_-\in\Psi_{n-1}(X)_G.
\end{equation}
We claim that 
\begin{equation}\label{e-gue210812yydb}
\mbox{$R: L^2_{(0,q)}(X)\To L^2_{(0,q)}(X)$ is bounded}. 
\end{equation}
Let $u\in\Omega^{0,q}(X)$. We have 
\begin{equation}\label{e-gue210812ycd}
\norm{Ru}^2\leq\norm{(R^*R)u}\norm{u}\leq\cdots\leq\norm{(R^*R)^{2^N}}^{2^{-N}}\norm{u}^{2-2^{-N}},
\end{equation}
for every $N\in\mathbb N$, where $R^*$ is the adjoint of $R$. From stationary phase formula, we can check that 
\[(R^*R)^{2^N}\in\Psi_{n-2^{N+1}}(X)_G.\]
We take $N$ large enough so that $(R^*R)^{2^N}$ is $L^2$ bounded. From this observation and \eqref{e-gue210812ycd}, we get the claim \eqref{e-gue210812yydb}. 
From \eqref{e-gue210812yyda}, \eqref{e-gue210812yydb} and notice that $T^G_{f,-}$ is $L^2$ bounded, the lemma follows. 
\end{proof}

\begin{proof}[Proof of Theorem~\ref{thm:kfourierszego}]
We will use the same notations as Section~\ref{s-gue210812yyd}. For $f, g\in\mathcal{C}^\infty(X)^G_0$, let $C_j(f,g):=\hat C_{j,-}(f,g)$, $j=0,1,2,\ldots$, where 
$\hat C_{j,-}(f,g)$, $j=0,1,2,\ldots$, are as in Theorem~\ref{t-gue210706ycdg}. It is straightforward to see that $C_j(f,g)\in\mathcal{C}^\infty(X)^G_0$, $j=0,1,\ldots$. 
Fix $N\in\mathbb N$. From \eqref{e-gue210706ycds}, we have 
\begin{equation}\label{e-gue210812yyds}
\hat R^{N+1}T^G_{f,-}\circ T^G_{g,-}-\sum^N_{j=0}\hat R^{N+1-j}T^G_{C_{j}(f,g),-}\in\Psi_{n}(X)_G.
\end{equation}
From Lemma~\ref{l-gue210812yydI} and \eqref{e-gue210812yyds}, there is a constant $C>0$ such that 
\begin{equation}\label{e-gue210812yydt}
\norm{\Bigr(\hat R^{N+1}T^G_{f,-}\circ T^G_{g,-}-\sum^N_{j=0}\hat R^{N+1-j}T^G_{C_{j}(f,g),-}\Bigr)u}\leq C\norm{u},
\end{equation}
for every $u\in\Omega^{0,q}(X)$. From \eqref{e-gue210812yydt}, we get 
\begin{equation}\label{e-gue210812yydu}
\norm{\Bigr(m^{N+1}T^G_{f,m}\circ T^G_{g,m}-\sum^N_{j=0}m^{N+1-j}T^G_{C_{j}(f,g),m}\Bigr)u}\leq C\norm{u},
\end{equation}
for every $u\in\Omega^{0,q}(X)$. From \eqref{e-gue210812yydu}, we get \eqref{e-gue210803ycdt}. The proof of \eqref{e-gue210803ycds} is similar. 
\end{proof}

\textbf{Acknowledgements:} This project was started during the first author’s postdoctoral fellowship at the National Center for Theoretical Sciences in Taiwan; we thank the Center for the support. Chin-Yu Hsiao was partially supported by Taiwan Ministry of Science and Technology projects  108-2115-M-001-012-MY5, 109-2923-M-001-010-MY4.

\end{document}